\documentclass[12pt,3p]{elsarticle}
% AMS packages:
\usepackage{amsmath, amsfonts}
\usepackage{amsthm}
\usepackage[toc,page]{appendix}
\usepackage{enumitem}
\usepackage{mathtools}
\usepackage{amssymb}
\usepackage{hyperref}
\usepackage{natbib}
\usepackage{etoolbox}
\patchcmd{\thmhead}{(#3)}{#3}{}{}
\usepackage{graphicx}
\numberwithin{equation}{section}
\usepackage{graphicx}
\newtheorem{thm}{Theorem}[section]
\newtheorem*{theorem*}{Internal Edge Homogenization Result}

\newtheorem{lem}[thm]{Lemma}
\newtheorem{prop}[thm]{Proposition}
\theoremstyle{definition}

\theoremstyle{remark}
\newtheorem{rem}[thm]{Remark}
\usepackage{fancyhdr}
\pagestyle{fancy}
\lhead{S. Sivaji Ganesh \& Vivek Tewary}
\rhead{Generic Simplicity of Spectral Edges}
\linespread{1.22335}
\makeatletter
\def\ps@pprintTitle{%
	\let\@oddhead\@empty
	\let\@evenhead\@empty
	\def\@oddfoot{}%
	\let\@evenfoot\@oddfoot}
\makeatother
%%%%%%%%%%%%%%%--BODY--%%%%%%%%%%%%%%%%%%
\begin{document}
	\begin{frontmatter}
		\title{Generic Simplicity of Spectral Edges and Applications to Homogenization}		
		\author{Sivaji Ganesh Sista}%\corref{cor1}}
		\ead{sivaji.ganesh@iitb.ac.in}
		\cortext[cor1]{Corresponding author}
		\author{Vivek Tewary}
		\ead{vivekt@iitb.ac.in}
		\address{Department of Mathematics, Indian Institute of Technology Bombay, Powai, Mumbai, 400076, India.}
		\begin{abstract}
			We consider the spectrum of a second-order elliptic operator in divergence form with periodic coefficients, which is known to be completely described by Bloch eigenvalues. We show that under small perturbations of the coefficients, a multiple Bloch eigenvalue can be made simple. The Bloch wave method of homogenization relies on the regularity of spectral edge. The spectral tools that we develop, allow us to obtain simplicity of an internal spectral edge through perturbation of the coefficients. As a consequence, we are able to establish Bloch wave homogenization at an internal edge in the presence of multiplicity by employing the perturbed Bloch eigenvalues. We show that all the crossing Bloch modes contribute to the homogenization at the internal edge and that higher and lower modes do not contribute to the homogenization process.
		\end{abstract}
		
		\begin{keyword}
			%% keywords here, in the form: keyword \sep keyword
			Bloch eigenvalues \sep Genericity \sep Periodic Operators \sep Homogenization
			%% MSC codes here, in the form: \MSC code \sep code
			%% or \MSC[2008] code \sep code (2000 is the default)
			\MSC[2010] 47A55 \sep 35J15 \sep 35B27
		\end{keyword}
		
	\end{frontmatter}
\section{Introduction}

The goal of the paper is to study regularity properties of spectral edges of a periodic second-order elliptic operator in divergence form, given by
\begin{equation}
\mathcal{A}u:=-\frac{\partial}{\partial y_k}\left(a_{kl}(y)\frac{\partial u}{\partial y_l}\right),\label{eq1:operator}
\end{equation} where summation over repeated indices is assumed.
We make the following assumptions on the coefficients of the operator~\eqref{eq1:operator}: The coefficients $a_{kl}(y)$ are measurable bounded real-valued periodic functions defined on $\mathbb{R}^d$. Let $Y=[0,2\pi)^d$ be a basic cell for its lattice of periods in the $d$-dimensional euclidean space $\mathbb{R}^d$. The space of measurable bounded periodic real-valued functions in $Y$ is denoted by $L^\infty_\sharp(Y,\mathbb{R})$. Hence, $a_{kl}\in L^\infty_\sharp(Y,\mathbb{R})$. In many instances, we will identify $Y$ with a torus $\mathbb{T}^d$ and the space $L^\infty_\sharp(Y,\mathbb{R})$ with $L^\infty(\mathbb{T}^d,\mathbb{R})$, in the standard way. The matrix $A=(a_{kl})$ is symmetric, i.e., $a_{kl}(y)=a_{lk}(y)$. Further, the matrix $A$ is {\it coercive}, i.e., there exists an $\alpha>0$ such that  
\begin{equation}\label{coercivity}
\forall\, v\in\mathbb{R}^d\mbox{ and } a.e.\, y\in\mathbb{R}^d,\langle A(y)v, v\rangle\geq \alpha||v||^2.
\end{equation}

Let $Y^{'}=\left[-\dfrac{1}{2},\dfrac{1}{2}\right)^d$ be a basic cell for the dual lattice in $\mathbb{R}^d$. Then, the spectrum of $\mathcal{A}$ can be studied by evaluating, for $\eta\in Y'$, the spectrum of the shifted operator \begin{align}\mathcal{A}(\eta)=e^{-i\eta\cdot y}\mathcal{A}e^{i\eta\cdot y}=-\left(\frac{\partial}{\partial y_k}+i\eta_k\right)a_{kl}(y)\left(\frac{\partial}{\partial y_l}+i\eta_l\right).\label{shiftedoperator}\end{align}

This is an unbounded operator in $L^2_\sharp(Y)$, the space of all $L^2_{loc}(\mathbb{R}^d)$ functions that are $Y$-periodic. The operator $\mathcal{A}$ in $L^2(\mathbb{R}^d)$ is unitarily equivalent to the fibered operator 
\begin{equation*}
\int^{\bigoplus}_{Y^{'}}\mathcal{A}(\eta)d\eta
\end{equation*}
in the Bochner space $L^2(Y^{'},L^2_{\sharp}(Y))$. As a consequence of this fact, the spectrum of $\mathcal{A}$ is the union of the spectra of $\mathcal{A}(\eta)$ in $L^2_{\sharp}(Y)$ as $\eta$ varies in $Y^{'}$. For a proof, see~\cite[p.~284]{Reed1978}. Let $(\lambda_n(\eta))_{n=1}^\infty$ denote the sequence of increasing eigenvalues for $\mathcal{A}(\eta)$, counting multiplicity. The functions $\eta\mapsto\lambda_n(\eta)$ are known as the Bloch eigenvalues of the operator $\mathcal{A}$. Let $\sigma_n^-=\displaystyle\min_{\eta\in Y^{'}}\lambda_n(\eta)$ and $\sigma_n^+=\displaystyle\max_{\eta\in Y^{'}}\lambda_n(\eta)$, then, the spectrum of the operator $\mathcal{A}$ is given by $\bigcup_{n\in\mathbb{N}}[\sigma_n^-,\sigma_n^+]$. Therefore, it is a union of closed intervals, which may overlap. However, it may also be written as $[0,\infty)\setminus\sqcup_{j=1}^N (\mu_j^-,\mu_j^+)$, where $N$ takes values in $\mathbb{N}\cup\{\infty\}$. The pairwise disjoint intervals $(\mu_j^-,\mu_j^+)$ are known as spectral gaps and $(\mu_j^{\pm})_{j=1}^N$ are known as spectral edges. As depicted in Fig.~\ref{figure1}, $\sigma_n^\pm$ may not be spectral edges, even though the corresponding Bloch eigenvalue is simple.

In the first part of the paper, we will study regularity of Bloch eigenvalues near the points where the spectral edge is attained in the dual parameter space. Regularity properties of the Bloch eigenvalues in the parameter are important in applications in the theory of effective mass~\cite{AllairePiatnitski2005} and Bloch wave method in homogenization~\cite{Conca1997}. For periodic Schr\"odinger operators, Wilcox~\cite{Wilcox78} proved that outside a set of measure zero in the dual parameter space, the Bloch eigenvalues are analytic and the Bloch eigenfunctions may be chosen to be analytic. However, this measure zero set might intersect the spectral edge, which would limit applicability of such a result. For the linear elasticity operator, the Bloch eigenvalues are not analytic near the bottom spectral edge, which poses a major difficulty in the passage to limit in the Bloch wave method of homogenization~\cite{SivajiGanesh2005}.

Study of parametrized eigenvalue problems is an active area of research, even in finite dimensions~\cite{michor98},~\cite{Rainer2014}. Broadly speaking, regularity results for parametrized eigenvalues of selfadjoint operators are available in two cases: (i) for one parameter eigenvalue problems~\cite{Rellich1969},~\cite{Kato1995}.
(ii) for simple eigenvalues, regardless of the number of parameters. Multiple parameters are unavoidable in most applications of interest. Examples include propagation of singularities for hyperbolic systems of equations with multiple characteristics leading to novel phenomena such as conical refraction~\cite{lax1982},~\cite{dencker1988},  stability of hyperbolic initial-boundary-value problems~\cite{Zumbrun2005} and Bloch waves for elasticity system~\cite{SivajiGanesh2005}. Hence, an assumption of simplicity is useful in applications~\cite{AllaireVanni2004},~\cite{AllaireRauch2011},~\cite{AllaireRauch2013}. 

In the literature, it has been shown that under perturbations of some relevant parameters like domain shape, coefficients, potentials etc, a multiple eigenvalue can be made simple. In a well-known paper~\cite{Albert1975}, Albert proves that, for a compact manifold $M$, the set of all smooth potentials $V\in C^{\infty}(M)$ for which the operator $-\Delta+V$ has only simple eigenvalues is a residual set in the space of all smooth admissible potentials. Similar results were proved by Uhlenbeck~\cite{Uhlenbeck1976} using topological methods. Generic simplicity of the spectrum with respect to domain has been established and applied in proving stabilizability and controllability results for the plate equation~\cite{Zuazua2000} and the Stokes system in two dimensions~\cite{Zuazua2001} by Ortega and Zuazua.

We intend to generalize Albert's method to the spectrum of periodic operators. Albert's result is applicable to operators with discrete spectrum, whereas a periodic operator typically has no eigenvalues. The symmetries of the periodic operator allow us to write it as a direct integral of operators with compact resolvent. Hence, the method of Albert may be applied in a fiberwise manner. However, the fiber~\eqref{shiftedoperator} is an operator with complex-valued coefficients. Further, the perturbation is sought in the second-order term as opposed to the zeroth-order term in~\cite{Albert1975}. In this paper, we overcome these difficulties and prove that the Bloch eigenvalues can be made simple locally in the parameter through a perturbation in the coefficients of the operator $\mathcal{A}$. Further, by applying fiberwise perturbation on the fibered operator $\int^{\bigoplus}_{Y^{'}}\mathcal{A}(\eta)d\eta$, we can make sure that the corresponding eigenvalue of interest is simple for all parameter values.
\begin{figure}
	\centering
	\includegraphics[scale=0.589]{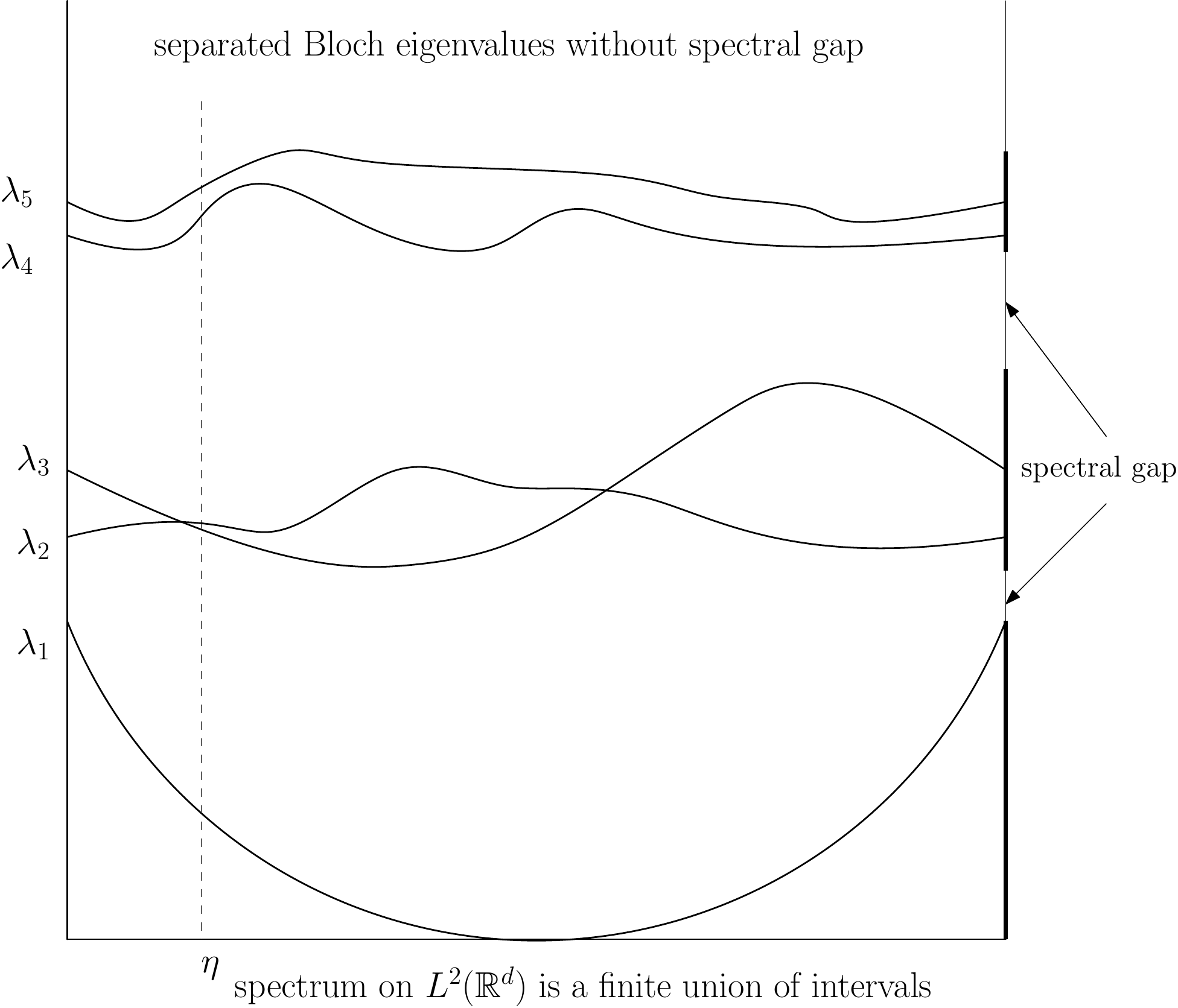}	
	\caption{Bloch eigenvalues $\lambda_4$ and $\lambda_5$ are simple, but have no spectral gap between them.}\label{figure1}
\end{figure}

The latter part of the paper is concerned with the theory of Bloch wave homogenization. In homogenization, one studies the limits of solutions to equations with highly oscillatory coefficients, such as
\begin{align}\label{anotherequation}
-\nabla\cdot\left(A\left(\frac{x}{\epsilon}\right)\nabla u^{\epsilon}\right)+\varkappa^2u^{\epsilon}=f~~\mbox{ in }\mathbb{R}^d,
\end{align}

for $f\in L^2(\mathbb{R}^d)$ and $\varkappa>0$. 

Suppose that $u^\epsilon$ converges weakly in $H^1(\mathbb{R}^d)$ to $u^{*}$. Then, the theory of homogenization~\cite{Tartar2009},~\cite{Bensoussan2011} shows that the limit $u^{*}$ solves an equation of the same type and identifies the matrix $A^*$:
\begin{align*}
-\nabla\cdot\left(A^*\nabla u^{*}\right)+\varkappa^2u^{*}=f~~\mbox{ in }\mathbb{R}^d.
\end{align*}
Bloch wave method of homogenization achieves this characterization through regularity properties of Bloch eigenvalues at the bottom of the spectrum. In particular, the homogenized matrix $A^{*}$ is characterized by the Hessian of the lowest Bloch eigenvalue at $0\in Y^{'}$~\cite{Conca1997}. Similarly, in the theory of internal edge homogenization~\cite{Birman2006}, the following regularity properties of Bloch eigenvalues near the spectral edge play an important role in obtaining operator error estimates:
\begin{enumerate}
	\item[(A)] The spectral edge must be simple, i.e., it is attained by a single Bloch eigenvalue.
	\item[(B)] The spectral edge must be attained at finitely many points by a Bloch eigenvalue.
	\item[(C)] The spectral edge must be non-degenerate, i.e., for some $m,r\in\mathbb{N}$, if the Bloch eigenvalue $\lambda_m(\eta)$ attains the spectral edge $\lambda_0$ at the points $\{\eta_j\}_{j=1}^r$, then the Bloch eigenvalue must satisfy, for $j=1,2,\ldots,r$,
	\begin{align*}
	\lambda_{m}(\eta)-\lambda_0=(\eta-\eta_j)^T B_j (\eta-\eta_j)+{O}(|\eta-\eta_j|^3), \mbox{ for }  \eta \mbox{ near } \eta_j,
	\end{align*}
    where $B_j$ are positive definite matrices.
\end{enumerate}

While these features are readily available for the lowest Bloch eigenvalue corresponding to the divergence-type scalar elliptic operator, these properties may not be available for other spectral gaps of the same operator~\cite{Kuchment}. However, the following results are available regarding these properties: Klopp and Ralston~\cite{KloppRalston00} proved the simplicity of a spectral edge of Schr\"odinger operator $-\Delta+V$ under perturbation of the potential term. In two dimensions, spectral edges are known to be isolated~\cite{Filonov15}. Also, in two dimensions, a degenerate spectral edge can be made non-degenerate through a perturbation with a potential having a larger period~\cite{ParShteren17}. 

The validity of hypotheses (A), (B), (C) is usually assumed in the literature~\cite{Kuchment}; for example, in establishing Green's function asymptotics~\cite{KuchmentRaich2012},~\cite{KhaKuchmentRaich2017}, for internal edge homogenization~\cite{Birman2006} and to establish localization for random Schr\"odinger operators~\cite{Veselic2002}. Local simplicity of Bloch eigenvalues is assumed in the study of diffractive geometric optics~\cite{AllaireRauch2011},~\cite{AllaireRauch2013} and homogenization of periodic systems~\cite{AllaireVanni2004}. Following Klopp and Ralston~\cite{KloppRalston00}, we apply a perturbation to the coefficients of the operator $\mathcal{A}$ so that a multiple spectral edge becomes simple, under the condition that the coefficients are in $W^{1,\infty}$. However, if the coefficients of $\mathcal{A}$ are in $L^\infty$, a multiple spectral edge can be made simple through a small perturbation of the coefficients with the added assumption that the spectral edge is attained at only finitely many points. Thus, our results suggest a possible interplay between the validity of these assumptions and the regularity of the coefficients. Further, these spectral tools also allow us to achieve homogenization at an internal edge in the presence of multiplicty. 

More details on the spectrum of elliptic periodic operators may be found in Reed and Simon~\cite{Reed1978} and for state of the art on  periodic differential operators, see the review by Kuchment~\cite{Kuchment}. 

\subsection{Main Results}
Let $S\!ym(d)$ denote the space of all real symmetric matrices, i.e., if \(A=(a_{kl})\in S\!ym(d)\), then $a_{kl}=a_{lk}$. Let \begin{equation*}M_B^{>}=\{A:\mathbb{R}^d\to{S\!ym}(d):a_{kl}\in L^{\infty}_\sharp(Y,\mathbb{R}) \mbox{ and $A$ is coercive }\}.\end{equation*}

$M_B^{>}$ may be identified as a subset of the space of $d(d+1)/2$-tuples of $L^{\infty}_\sharp$ functions and we shall use the norm-topology on this space in our further discussion. A Baire space is a topological space in which the countable intersection of dense open sets is dense. Note that $M_B^{>}$ is an open subset of the space of all symmetric matrices with $L^\infty_\sharp(Y,\mathbb{R})$ entries, which forms a complete metric space, and hence $M_B^{>}$ is a Baire Space. We shall call a property {\it generic} in a topological space $X$, if it holds on a set whose complement is of first category in $X$. In particular, a property that is generic on a Baire space holds on a dense set.

The rest of the subsection will be devoted to the statements of the main results.

\begin{thm}
	\label{theorem:1}
	Let $\eta_0\in Y^{'}$. The eigenvalues of the shifted operator $\mathcal{A}(\eta_0)$ are generically simple with respect to the coefficients $A=\left(a_{kl}\right)_{k,l=1}^d$ in $M_B^{>}$.
\end{thm}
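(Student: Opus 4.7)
The plan is to prove genericity via the Baire category theorem, adapting Albert's perturbation strategy to the fibered periodic setting. For each $n\in\mathbb{N}$, set
\begin{equation*}
S_n := \bigl\{A\in M_B^{>} : \lambda_1(\eta_0),\ldots,\lambda_n(\eta_0) \text{ are pairwise distinct}\bigr\},
\end{equation*}
so that $\bigcap_{n=1}^{\infty} S_n$ is exactly the set of coefficients for which every eigenvalue of $\mathcal{A}(\eta_0)$ is simple. Since $M_B^{>}$ is a Baire space, it suffices to show that each $S_n$ is open and dense. Openness is standard: the sesquilinear form $q_A[u,v]=\int_Y a_{kl}\, D_l u\,\overline{D_k v}\,dy$, with $D_l=\partial_l+i\eta_{0,l}$, depends continuously on $A\in L^\infty_\sharp(Y)$, so each map $A\mapsto\lambda_j(\eta_0)$ is continuous on $M_B^{>}$ by the Courant--Fischer minimax characterization, and the strict ordering $\lambda_1<\cdots<\lambda_n<\lambda_{n+1}$ persists under sufficiently small $L^\infty$ perturbations.

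The crux is density. It is enough to show that whenever some eigenvalue $\lambda\in\{\lambda_1,\ldots,\lambda_n\}$ is multiple, an arbitrarily small perturbation of the coefficients can strictly decrease its multiplicity; finitely many iterations then produce an element of $S_n$ within $\varepsilon$ of $A$. Let $\lambda$ have multiplicity $m\geq 2$ with $L^2_\sharp$-orthonormal eigenbasis $\{u_1,\ldots,u_m\}$, and take a real symmetric perturbation $B=(b_{kl})$ with $b_{kl}\in L^\infty_\sharp(Y,\mathbb{R})$. For $|t|$ small the family $A_t=A+tB$ remains in $M_B^{>}$, and the Rellich--Kato analytic perturbation theorem produces $m$ analytic branches of eigenvalues of $\mathcal{A}_t(\eta_0)$ bifurcating from $\lambda$, whose first-order derivatives at $t=0$ are the eigenvalues of the $m\times m$ Hermitian matrix
\begin{equation*}
M^{(B)}_{ij} := \int_Y b_{kl}(y)\, D_l u_i(y)\,\overline{D_k u_j(y)}\,dy, \qquad 1\leq i,j\leq m.
\end{equation*}
If some admissible $B$ makes $M^{(B)}$ not a scalar multiple of $I_m$, then for small $t\neq 0$ the eigenvalue $\lambda$ of $\mathcal{A}_t(\eta_0)$ splits strictly, reducing its multiplicity.

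Thus the key lemma is the existence of such a $B$. I would proceed by contradiction: assume the linear map $B\mapsto M^{(B)}$ takes values only in $\mathbb{R}I_m$. Then $M^{(B)}_{ij}=0$ for every admissible $B$ and every $i\neq j$. Specializing to $b_{kl}(y)=\varphi(y)\bigl(\delta_{kk_0}\delta_{ll_0}+\delta_{kl_0}\delta_{lk_0}\bigr)$ with $\varphi$ ranging over $L^\infty_\sharp(Y,\mathbb{R})$, and noting that the real and imaginary parts of the resulting complex integral must each vanish against every real test function $\varphi$, forces the pointwise identity
\begin{equation*}
D_{k_0}u_i(y)\,\overline{D_{l_0}u_j(y)} + D_{l_0}u_i(y)\,\overline{D_{k_0}u_j(y)} = 0 \quad\text{a.e. } y\in Y,
\end{equation*}
for every pair $(k_0,l_0)$. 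Writing $\xi=Du_i(y)$ and $\zeta=\overline{Du_j(y)}$ in $\mathbb{C}^d$, this is the statement that the $d\times d$ matrix $\xi\zeta^T+\zeta\xi^T$ vanishes, and an elementary rank-one calculation forces either $Du_i(y)=0$ or $Du_j(y)=0$ at almost every $y\in Y$.

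The main obstacle is the final rigidity step: ruling out the alternative that $\{Du_i=0\}\cup\{Du_j=0\}$ covers $Y$. One would combine $Y$-periodicity (so that the plane-wave solution $Du\equiv 0$, equivalent to $\nabla u=-i\eta_0 u$, is incompatible with non-triviality of $u$ unless $\eta_0$ is a lattice point) with an appropriate form of unique continuation for the elliptic operator $\mathcal{A}(\eta_0)$, in order to deduce that $u_i$ and $u_j$ are linearly dependent, contradicting orthonormality. Unlike the Schr\"odinger case of Albert, where the analogous condition collapses to $\operatorname{Re}(u_i\overline{u_j})\equiv 0$ and involves only zeroth-order quantities, the second-order perturbation here produces constraints on the twisted gradients $D_l u_i$; handling them at the low regularity $a_{kl}\in L^\infty_\sharp$ is the technically most delicate part, and the place where the coercivity and boundedness hypotheses on $A$ will have to be used most carefully.
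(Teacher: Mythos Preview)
Your overall strategy---Baire category on $M_B^{>}$, openness of $S_n$ via min-max continuity of the eigenvalues, density by Rellich--Kato splitting of a multiple eigenvalue, reduced to exhibiting a symmetric $B$ for which the first-order matrix $M^{(B)}$ is not a scalar multiple of $I_m$---is exactly the paper's.

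The gap is in your proof of the key lemma. You assume $M^{(B)}\in\mathbb{R}I_m$ for every admissible $B$, but then extract only the off-diagonal consequence $M^{(B)}_{ij}=0$ for $i\neq j$. From this you correctly derive the pointwise alternative $Du_i(y)=0$ or $Du_j(y)=0$ a.e., and then propose to finish via unique continuation. At the stated $L^\infty$ regularity of the coefficients, unique continuation for divergence-form operators is delicate and not generally available; you acknowledge this step is not completed, and the proposed route is genuinely problematic.

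The fix is elementary and already implicit in your hypothesis: use the \emph{other} half of the scalar assumption, $M^{(B)}_{ii}=M^{(B)}_{jj}$ for every $B$. Testing with $B=\varphi\,e_k\otimes e_k$ for arbitrary real $\varphi\in L^\infty_\sharp$ yields $|D_ku_i|^2=|D_ku_j|^2$ a.e.\ for each $k$. Combined with your off-diagonal identity (which for $k_0=l_0=k$ already gives $D_ku_i\,\overline{D_ku_j}=0$ a.e.), this forces $D_ku_i=D_ku_j=0$ a.e.\ for every $k$, so both $u_i$ and $u_j$ are scalar multiples of $e^{-i\eta_0\cdot y}$, contradicting their linear independence. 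No unique continuation enters. The paper proves the same lemma constructively rather than by contradiction: either some single-index product $D_jf_1\,\overline{D_jf_2}$ is not identically zero (then a diagonal $B$ with one nonzero entry, chosen via $L^1$--$L^\infty$ duality, makes an off-diagonal entry of $M^{(B)}$ nonzero), or else some $|D_jf_1|^2-|D_jf_2|^2$ is not identically zero (then pass to the basis $\{f_1+f_2,f_1-f_2,\ldots\}$ and repeat); if both alternatives fail, the same linear-dependence contradiction results.
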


\begin{rem}
		\item Theorem~\ref{theorem:1} is an extension of the theorem of Albert~\cite{Albert1975} which proves that the eigenvalues of $-\Delta+V$ are generically simple with respect to $V\in C^{\infty}(M)$ for a compact manifold $M$. The potential $V$ is the quantity of interest for Schr\"odinger operator, $-\Delta+V$. For the applications that we have in mind, for example, the theory of homogenization, the periodic matrix $A$ in the divergence type elliptic operator $-\nabla\cdot(A\nabla)$ is of physical importance. The spectrum of such operators is not discrete, and is analyzed through Bloch eigenvalues, which introduces an extra parameter $\eta\in Y^{'}$ to the problem. The determination of real-valued  perturbation for the shifted operator $\mathcal{A}(\eta)$, which has complex-valued coefficients, poses additional difficulties, when coupled with the lack of regularity of the coefficients which the applications demand.
\end{rem}

\begin{thm}\label{theorem:2}
	Let $m\in\mathbb{N}$, then for the Bloch eigenvalue $\lambda_m(\eta)$ of the periodic operator $\mathcal{A}=-\nabla\cdot(A\nabla)$, where $A\in M_B^>$, there exists a perturbation of $\mathcal{A}$ such that the perturbed eigenvalue $\tilde{\lambda}_m(\eta)$ is simple for all $\eta\in Y^{'}$.
\end{thm}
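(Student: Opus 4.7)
The plan is to promote the pointwise (in $\eta$) genericity of Theorem~\ref{theorem:1} to a uniform-in-$\eta$ statement by combining it with joint continuity of Bloch eigenvalues and compactness of the closed dual cell $\overline{Y'}$ (identified with the torus $\mathbb{T}^d$ via periodicity of the Bloch eigenvalues in $\eta$). Because a single matrix $A\in M_B^{>}$ cannot in general be perturbed to simplify $\lambda_m(\eta)$ at every $\eta$ simultaneously by a bounded obstruction count, the perturbation I construct will be \emph{fiberwise}, as anticipated in the Introduction: a direct-integral operator $\int^{\bigoplus}_{Y'}\tilde{\mathcal{A}}(\eta)\,d\eta$ whose fiber at $\eta$ arises from a coefficient perturbation depending measurably on $\eta$.

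First I would verify that the map $(A,\eta)\mapsto\lambda_n^A(\eta)$ is jointly continuous on $M_B^{>}\times\overline{Y'}$. This follows from the fact that the sesquilinear form associated with $\mathcal{A}(\eta)$ depends continuously on $A$ in $L^\infty_\sharp(Y)$, uniformly in $\eta\in\overline{Y'}$, yielding norm-resolvent continuity and, via the min-max principle, continuity of each individual eigenvalue. A direct consequence is that $\{(A,\eta):\lambda_m^A(\eta)\text{ is simple}\}$ is open in $M_B^{>}\times\overline{Y'}$.

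Next, for each $\eta_0\in\overline{Y'}$, Theorem~\ref{theorem:1} produces some $A_{\eta_0}\in M_B^{>}$, arbitrarily close to the given $A$ in $L^\infty$, such that every eigenvalue of $\mathcal{A}_{A_{\eta_0}}(\eta_0)$ is simple; in particular, $\lambda_m^{A_{\eta_0}}(\eta_0)$ is simple. By the continuity established above, there is an open neighbourhood $U_{\eta_0}\subset\overline{Y'}$ on which $\lambda_m^{A_{\eta_0}}(\cdot)$ remains simple. By compactness, extract a finite subcover $U_{\eta_1},\ldots,U_{\eta_N}$ of $\overline{Y'}$, and refine it to a Borel partition $\overline{Y'}=\bigsqcup_{j=1}^N V_j$ with $V_j\subset U_{\eta_j}$ (take $V_1=U_{\eta_1}$ and $V_j=U_{\eta_j}\setminus\bigcup_{i<j}V_i$). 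Define
\begin{equation*}
\tilde{\mathcal{A}}(\eta):=\mathcal{A}_{A_{\eta_j}}(\eta)\quad\text{for }\eta\in V_j,\; j=1,\ldots,N.
\end{equation*}
By construction, the $m$-th eigenvalue $\tilde{\lambda}_m(\eta)$ of $\tilde{\mathcal{A}}(\eta)$ equals $\lambda_m^{A_{\eta_j}}(\eta)$ on each $V_j$ and is therefore simple at every $\eta\in\overline{Y'}$. The direct integral $\tilde{\mathcal{A}}:=\int^{\bigoplus}_{Y'}\tilde{\mathcal{A}}(\eta)\,d\eta$ is well-defined on $L^2(Y',L^2_\sharp(Y))$ because the piecewise coefficient map $\eta\mapsto A_{\eta_{j(\eta)}}$ is Borel.

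The main obstacle is the joint continuity statement invoked above: continuity in $\eta$ for fixed $A$ is classical Bloch theory, and Rellich-type one-parameter perturbation theory handles continuity in a scalar parameter, but simultaneous continuity in the matrix-valued coefficient with the $L^\infty$-topology is more delicate and must be extracted from the uniform coercivity of $M_B^{>}$ and the Courant--Fischer characterisation. A secondary subtlety is ensuring that $\tilde{\mathcal{A}}$ is legitimately a \emph{small} perturbation of $\mathcal{A}$ in the fibered sense: since each $A_{\eta_j}-A$ can be arranged to have $L^\infty$-norm below any prescribed $\epsilon$, the fiber-by-fiber difference $\tilde{\mathcal{A}}(\eta)-\mathcal{A}(\eta)$ is controlled uniformly in $\eta$, which turns the construction into a genuine bounded perturbation of the fibered operator.
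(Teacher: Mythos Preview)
Your proposal is correct and follows essentially the same route as the paper: pointwise simplification via Theorem~\ref{theorem:1}, extension to a neighbourhood by continuity of Bloch eigenvalues in $\eta$, a finite subcover by compactness of the torus, a disjoint refinement, and then a fiberwise-defined direct integral operator. The paper writes the fiber as $-(\nabla+i\eta)\cdot\bigl(A+\sum_j B_j\chi_{\mathcal{O}_j}\bigr)(\nabla+i\eta)$ rather than your $\mathcal{A}_{A_{\eta_j}}(\eta)$, but this is purely notational. One small simplification: you flag joint continuity in $(A,\eta)$ as the ``main obstacle,'' but in fact your argument only ever uses continuity in $\eta$ for each \emph{fixed} perturbed matrix $A_{\eta_j}$, which is the standard Lipschitz continuity of Bloch eigenvalues and is exactly what the paper invokes (Remark~\ref{simple_in_neighborhood}); the full joint continuity is not needed.
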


A spectral edge $\lambda_0$ is said to be {\it simple} if the set $\displaystyle\{m\in\mathbb{N}:\exists\,\eta\in Y^{'}\mbox{ such that }\lambda_m(\eta)=\lambda_0\}$ is a singleton. A spectral edge is said to be multiple if it is not simple.

\begin{thm}\label{theorem:3}
	Let $A\in M_B^>$. Further, suppose that its entries $A=\left(a_{kl}\right)_{k,l=1}^d$ belong to the class $W^{1,\infty}_\sharp(Y,\mathbb{R})$. Then, a multiple spectral edge of the operator $\mathcal{A}=\displaystyle -\nabla\cdot (A\nabla)$ can be made simple by a small perturbation in the coefficients.
\end{thm}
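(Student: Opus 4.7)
The plan is to adapt the perturbative strategy that Klopp and Ralston employed for Schr\"odinger operators to the divergence-form setting. We may assume $\lambda_0$ is a lower gap edge, so that there exist distinct band indices $n_1,\ldots,n_K$ (with $K \geq 2$) and points $\eta_i^* \in Y^{'}$ with $\lambda_{n_i}(\eta_i^*) = \max_{\eta \in Y^{'}}\lambda_{n_i}(\eta) = \lambda_0$ for each $i$. It suffices to treat $K = 2$; the remaining bands attaining $\lambda_0$ can be separated one by one through sufficiently small subsequent perturbations, each chosen smaller than the separation produced by the previous step.

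Invoking Theorem~\ref{theorem:2} twice and using that pointwise simplicity of a Bloch eigenvalue is stable under small $L^\infty$-perturbations of the coefficients, we may assume after a preliminary perturbation that $\lambda_{n_1}(\eta)$ and $\lambda_{n_2}(\eta)$ are both simple for every $\eta \in Y^{'}$. The Bloch eigenfunctions $\phi_{n_i}(\eta,\cdot)$ may then be chosen to depend analytically on $\eta$ near each $\eta_i^*$, and the standard first-order perturbation formula for a simple eigenvalue yields, for any real symmetric matrix-valued perturbation $B \in L^\infty_\sharp(Y,\mathbb{R})$,
\begin{equation*}
\left.\frac{d}{dt}\lambda_{n_i}(\eta, A+tB)\right|_{t=0} = \int_Y \langle B(y)(\nabla + i\eta)\phi_{n_i}(\eta,y),\, (\nabla + i\eta)\phi_{n_i}(\eta,y)\rangle\,dy.
\end{equation*}
Setting $G_i(y) := \mathrm{Re}\left[(\nabla + i\eta_i^*)\phi_{n_i}(\eta_i^*,y) \otimes \overline{(\nabla + i\eta_i^*)\phi_{n_i}(\eta_i^*,y)}\right]$, an envelope-type computation at the maximisers reduces the task of separating $\max_\eta \lambda_{n_1}(\eta,A+tB)$ from $\max_\eta \lambda_{n_2}(\eta,A+tB)$ for small $t>0$ to finding a real symmetric $B$ with $\int_Y B : G_1 \ne \int_Y B : G_2$, which is possible unless $G_1 \equiv G_2$ almost everywhere on $Y$.

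The principal obstacle is therefore to rule out the identity $G_1 \equiv G_2$, and this is precisely where the hypothesis $A \in W^{1,\infty}_\sharp$ enters decisively. Taking trace of the proposed identity yields $|(\nabla + i\eta_1^*)\phi_{n_1}|^2 = |(\nabla + i\eta_2^*)\phi_{n_2}|^2$ almost everywhere, while the full matrix identity further constrains the pairs of real and imaginary parts of the two gradient vector fields to span the same real $2$-plane pointwise. Writing $\phi_{n_i}(\eta_i^*,y) = e^{i\eta_i^* \cdot y} u_{n_i}(y)$ with $u_{n_i}$ periodic and substituting into the eigenvalue equation for $\mathcal{A}(\eta_i^*)$, this rigid pointwise gradient relation, combined with the strong unique continuation property for divergence-form elliptic operators with Lipschitz coefficients (a property that genuinely requires $A \in W^{1,\infty}$ and is known to fail for merely $L^\infty$ coefficients), leads to a contradiction with the distinctness of the bands $n_1$ and $n_2$. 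With $G_1 \not\equiv G_2$ established, selecting a separating $B$ and perturbing $A \to A + tB$ for $t$ sufficiently small renders the shifted spectral edge near $\lambda_0$ simple, completing the reduction to two bands and hence the proof.
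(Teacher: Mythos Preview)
Your argument contains a fundamental gap at the very first step. You invoke Theorem~\ref{theorem:2} to arrange that $\lambda_{n_1}(\eta)$ and $\lambda_{n_2}(\eta)$ are simple for every $\eta\in Y'$, but the paper is explicit (see the remark following Theorem~\ref{theorem:2} and Theorem~\ref{notdiffop}) that the perturbation produced by Theorem~\ref{theorem:2} is a \emph{fiberwise} perturbation yielding a non-local operator; it is not of the form $-\nabla\cdot(A+B)\nabla$ for any matrix $B$. Once you leave the class of differential operators, the subsequent ``small perturbation in the coefficients'' no longer makes sense, and the statement you are meant to prove concerns precisely a perturbation within that class. Moreover, for an upper gap edge attained by $\lambda_m$ and $\lambda_{m+1}$, the ordering $\lambda_m\leq\lambda_{m+1}$ forces $\lambda_m(\eta^*)=\lambda_{m+1}(\eta^*)$ at any point where the higher band touches $\lambda_0$; so arranging global simplicity via a differential perturbation is essentially the problem itself, not a harmless preliminary reduction.

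You have also misidentified where the $W^{1,\infty}$ hypothesis enters. The paper does not use unique continuation. Instead, $W^{1,\infty}$ regularity ensures (via interior Schauder-type estimates) that the Bloch eigenfunctions \emph{and their first derivatives} are H\"older continuous in $y$, uniformly in $\eta$. This permits the following construction: pick $\widehat\eta$ at the edge and an eigenfunction $\phi_0$ with $\big|(\partial_l+i\widehat\eta_l)\phi_0(y_0)\big|^2\geq\theta>0$ at some interior point $y_0$; take $B=\mathrm{diag}(0,\ldots,b_l,\ldots,0)$ with $b_l$ a nonnegative bump supported in a small ball about $y_0$. H\"older continuity of the gradients lets you control, \emph{uniformly in $\eta\in Y'$}, how much the perturbation can lower the quadratic form on vectors orthogonal to a carefully chosen direction $\phi_*^{(j)}(\eta)$. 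The min-max and max-min principles then yield $\lambda_m(\widehat\eta,t)<\lambda_0-\tfrac{7\theta}{12}t$ while $\lambda_{m+1}(\eta,t)>\lambda_0-\tfrac{5\theta}{12}t$ for \emph{all} $\eta$, separating the bands globally. Your envelope computation, by contrast, compares derivatives only at the original maximisers and does not control the behaviour of the bands at other $\eta$ after perturbation.
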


\begin{thm}\label{theorem:4}
	Let $A\in M_B^>$. Further, suppose that its entries $A=\left(a_{kl}\right)_{k,l=1}^d$ belong to the class $L^{\infty}_\sharp(Y,\mathbb{R})$. Let $\lambda_0$ correspond to the upper edge of a spectral gap of $\mathcal{A}$ and let $m$ be the smallest index such that the Bloch eigenvalue $\lambda_{m}$ attains $\lambda_0$. Assume that the spectral edge is attained by $\lambda_m(\eta)$ at finitely many points. Then, there exists a matrix $B=(b_{kl})_{k,l=1}^d$ with $L^\infty_\sharp(Y,\mathbb{R})$-entries and $t_0>0$ such that for every $t\in(0,t_0]$, a spectral edge is achieved by the Bloch eigenvalue $\lambda_m(\eta;A+tB)$ of the operator $\mathcal{A}=\displaystyle -\nabla\cdot (A+tB)\nabla$ and the spectral edge is simple.
\end{thm}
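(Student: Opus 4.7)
The plan is to localize to the finitely many edge-attaining points $\eta_1,\ldots,\eta_r$, apply analytic perturbation theory at each of them to split the coalescing eigenvalues, and then convert this pointwise splitting into simplicity of the perturbed spectral edge via a uniform continuity argument. The hypothesis that the edge is attained at only finitely many points enters twice: to find a single perturbation that separates eigenvalues at every edge-attaining point simultaneously, and to corral the global minimizers of the perturbed band into the neighborhoods where the splitting is effective.

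At each $\eta_j$, let $n_j$ denote the multiplicity of $\lambda_0$ as an eigenvalue of $\mathcal{A}(\eta_j;A)$, with eigenspace $V_j\subset L^2_\sharp(Y)$ spanned by an orthonormal basis $\{\phi_k^{(j)}\}_{k=1}^{n_j}$. Since $m$ is chosen minimal, there is a genuine spectral gap $\max_\eta\lambda_{m-1}(\eta;A)\leq\lambda_0-\delta_0$ for some $\delta_0>0$. For a symmetric matrix-valued perturbation $B\in L^\infty_\sharp(Y;S\!ym(d))$, the Hermitian matrix governing the first-order splitting of $\lambda_0$ at $\eta_j$ is
\begin{equation*}
M_j(B)_{ki}=\int_Y b_{pq}(y)\,\overline{(\partial_p+i\eta_{j,p})\phi_k^{(j)}(y)}\,(\partial_q+i\eta_{j,q})\phi_i^{(j)}(y)\,dy.
\end{equation*}
The same device used in the proof of Theorem~\ref{theorem:1} shows that the set of $B$ for which $M_j(B)$ has $n_j$ distinct eigenvalues is generic (dense open) in the ambient Baire space; being a finite intersection, it remains dense for $j=1,\ldots,r$ simultaneously. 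Kato's analytic perturbation theory then furnishes $t_0>0$ such that, for every $t\in(0,t_0]$, the $n_j$ eigenvalues of $\mathcal{A}(\eta_j;A+tB)$ emerging from $\lambda_0$ take the form $\lambda_0+t\mu_k^{(j)}+O(t^2)$ and are pairwise separated by a quantity of order $t$.

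For the globalization, norm-resolvent continuity of $\mathcal{A}(\eta;A+tB)$ in $t$, uniformly in $\eta\in Y'$, yields uniform convergence $\lambda_{m'}(\cdot;A+tB)\to\lambda_{m'}(\cdot;A)$ for every $m'$ as $t\to 0$. For sufficiently small $t$: (i) bands of index less than $m$ remain below $\lambda_0-\delta_0/2$, so they do not reach the new edge; (ii) any minimizer $\tilde\eta_t$ of $\lambda_m(\cdot;A+tB)$ is forced into a small ball around some $\eta_j$, because outside the union of these balls the continuous function $\lambda_m(\cdot;A)$ exceeds $\lambda_0$ on a compact set by a positive amount; (iii) joint continuity in $(t,\eta)$ propagates the Kato splitting from $\eta_j$ to $\tilde\eta_t$, giving $\lambda_{m+1}(\tilde\eta_t;A+tB)-\lambda_m(\tilde\eta_t;A+tB)\geq ct$. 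A short contradiction—any common minimizer of $\lambda_m$ and $\lambda_{m+1}$ would have to lie near some $\eta_j$, where the splitting forbids coincidence—then yields $\min_\eta\lambda_{m+1}(\eta;A+tB)>\min_\eta\lambda_m(\eta;A+tB)$, so only $\lambda_m$ reaches the perturbed edge, which is thus simple.

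The principal obstacle is reconciling the Kato expansion, which is pointwise in $\eta$, with a global statement about minimizers of a continuous function on $Y'$. The finiteness of the edge-attaining set, together with compactness of $Y'$, is precisely what enables step (ii); without it the perturbed minima could drift away from the original edge locus and no amount of local splitting would control the global edge. A secondary subtlety is that, in the purely $L^\infty$ setting, continuity of Bloch eigenvalues in the coefficients must be taken through bounded sesquilinear-form perturbations of norm $O(t)$ uniform in $\eta$, from which norm-resolvent continuity and hence the required uniform convergence of the bands follows.
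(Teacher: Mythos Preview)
Your three-step strategy---(1) split the eigenvalues at the finitely many edge points $\eta_j$, (2) confine the perturbed edge-minimizers to small neighborhoods of the $\eta_j$, (3) conclude simplicity---is exactly the paper's route (Proposition~\ref{corollarymultiple}, Lemma~\ref{continuity_spectral_edge}, and then the short closing argument). The localization step (ii) and the contradiction in your last paragraph match the paper's Lemma~\ref{continuity_spectral_edge} and final paragraph essentially verbatim.

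The one substantive difference is how step (1) is justified. The paper does not argue that for generic $B$ the first-order matrix $M_j(B)$ has $n_j$ \emph{distinct} eigenvalues. Instead it gives an explicit construction: a Hahn--Banach/duality argument (Lemma~\ref{hahnbanach}) produces a single $B$ for which every $M_j(B)$ has a nonzero off-diagonal entry in a suitable basis, hence is non-scalar; this drops each multiplicity by at least one, and the procedure is iterated. Your genericity claim is not what the proof of Theorem~\ref{theorem:1} actually establishes---that argument reduces multiplicity one step at a time, with a potentially different perturbation at each step---and for $n_j\ge 3$ the implication ``$M_j(B)$ non-scalar $\Rightarrow$ $M_j(B)$ has simple spectrum'' fails, so you would still need to show that the image of the linear map $B\mapsto M_j(B)$ actually meets the locus of Hermitian matrices with simple spectrum. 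The weaker statement you really need---that \emph{some} $B$ makes the perturbed $m$-th eigenvalue simple at every $\eta_j$ for all $t\in(0,t_0]$---does follow, either from the paper's iterative construction together with analyticity of the Rellich branches in $t$, or from a finite-intersection-of-open-dense-sets argument applied to the coefficient matrix $A'$ (as in Lemmas~\ref{lemma:11}--\ref{lemma:22}) rather than to the first-order matrix $M_j(B)$.
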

%\newpage
\begin{rem}
	\leavevmode
	\begin{enumerate}
		\item While Theorem~\ref{theorem:2} achieves global simplicity for a Bloch eigenvalue, the perturbed operator is no longer a differential operator, i.e., it is non-local. In the theory of homogenization, non-local terms usually appear as limits of non-uniformly bounded operators~\cite{Briane2002},~\cite{BrianeCalc2002}. In the presence of crossing modes, non-locality appears in the theory of effective mass~\cite{Chabu2018}.
		\item Theorem~\ref{theorem:3} is an adaptation of the theorem of Klopp and Ralston~\cite{KloppRalston00} to divergence-type operators. Their proof relies heavily on the H\"older regularity for weak solutions of divergence-type operators. In our proof, we require H\"older continuity of the solutions as well as their derivatives. Hence, we have to impose $W^{1,\infty}$ condition on the coefficients.
		\item In Theorem~\ref{theorem:4}, we weaken the $W^{1,\infty}$ requirement on the coefficients under assumption of finiteness on the number of points at which the spectral edge is attained. This is essential for the applications that we have in mind, in the theory of homogenization, where only $L^\infty$ regularity is available on the coefficients.
	\end{enumerate}
\end{rem}

We shall also prove a theorem on internal edge homogenization, whose complete statement is deferred to Section~\ref{homogen}. Let $A\in M_B^>$. Birman and Suslina~\cite{Birman2006} propose an effective operator and prove operator error estimates with respect to the operator norm in $L^2(\mathbb{R}^d)$ for the limit as $\epsilon\to 0$ of the operator $\mathcal{A}^\epsilon\coloneqq-\nabla\cdot\left(A(\frac{x}{\epsilon})\nabla\right)$, at a non-zero spectral edge $\lambda_0$ under the regularity hypotheses $(A), (B), (C)$.

\begin{theorem*}
	Under appropriate modifications of the regularity hypotheses on the spectral edge, an effective operator is proposed as an approximation of the operator $\mathcal{A}^\epsilon$ in the limit $\epsilon\to 0$ at a multiple spectral edge, and operator error estimates with respect to the operator norm in $L^2(\mathbb{R}^d)$ are proved. 
\end{theorem*}

\begin{rem}
		Multiplicity of Bloch eigenvalues is a crucial difficulty in Bloch wave homogenization. The internal edge homogenization result is an attempt at circumventing this issue. Previously, this was handled by use of directional analyticity of Bloch eigenvalues for the linear elasticity operator whose lowest Bloch eigenvalue has multiplicty $3$~\cite{SivajiGanesh2005}.
\end{rem}

\begin{rem}
	\leavevmode
	\begin{enumerate}
		\item Bloch wave method belongs to the family of multiplier techniques in partial differential equations. In particular, exponential type multipliers, $e^{\tau\phi}$, with real exponents, are used in obtaining Carleman estimates for elliptic operators~\cite{rousseau12}.
		\item Any operator of the form $-\nabla\cdot A\nabla$ in $L^2(\mathbb{R}^d)$ may be written in direct integral form, provided $A$ is periodic. A satisfactory spectral theory for such operators is available for real symmetric $A$. However, non-selfadjoint operators are becoming increasingly important in physics~\cite{Sjoestrand09}. For non-symmetric $A$, the eigenvalues of the fibers $\mathcal{A}(\eta)$ may no longer be real and the eigenfunctions may not form a complete set. These difficulties were surmounted in proving the Bloch wave homogenization theorem for non-selfadjoint operators in~\cite{Sivaji2004}. Nevertheless, the generalized eigenfunctions form a complete set for a large class of elliptic operators of even order~\cite{agmon62}. However, we are not aware of physical interpretations of complex-valued Bloch-type eigenvalues.
		\item Most of the results of this paper would have similar analogues for internal edges of an elliptic system of equations, for example, the elasticity system. It would be interesting to consider these problems for the spectrum of non-elliptic operators such as the Maxwell operator. 
	\end{enumerate}
\end{rem}

The plan of this paper is as follows; in Section~\ref{local_simplicity}, we prove Theorem~\ref{theorem:1} on generic simplicity of Bloch eigenvalues at a point. In Section~\ref{global_simplicity}, we prove Theorem~\ref{theorem:2} and in subsequent sections~\ref{simplicity_edge_1} and~\ref{simplicity_edge_2}, we prove Theorems~\ref{theorem:3} and~\ref{theorem:4} concerning generic simplicity of spectral edges.  In the final section~\ref{homogen}, we give a short introduction to internal edge homogenization and furnish an application of perturbation theory to Bloch wave homogenization by proving Theorem~\ref{theorem:5}.

\section{Local Simplicity of Bloch eigenvalues}\label{local_simplicity}

Let $\eta_0\in Y^{'}$. Let $P$ be the set defined by \begin{equation*}P\coloneqq\{A\in M_B^{>}\mbox{  : the eigenvalues of $\mathcal{A}(\eta_0)$ are simple} \}.\end{equation*}

We can write the set $P$ as an intersection of countably many sets as follows: Let $P_0:=M_B^{>}$, and \begin{align*}P_n&:=\{A\in M_B^{>}: \mbox{ the first $n$ eigenvalues of $\mathcal{A}(\eta_0)$ } \mbox{are simple} \}.\\
&=\{A\in M_B^>: \lambda_1(\eta_0)<\ldots<\lambda_n(\eta_0)<\lambda_{n+1}(\eta_0)\leq\lambda_{n+2}(\eta_0)\leq\ldots\}.\end{align*}

Note that, \begin{align*}P \subseteq \ldots \subseteq P_n\subseteq P_{n-1}\subseteq\ldots\subseteq P_1\subseteq P_0
&\qquad\mbox{and}\qquad P=\bigcap_{n=0}^{\infty}P_n.\end{align*}
We shall require the following two lemmas.

\begin{lem}\label{lemma:11}
	$P_n$ is open in $M_B^{>}$ for all $n\in\mathbb{N}\cup\{0\}$.
\end{lem}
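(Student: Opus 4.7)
The plan is to decompose $P_n$ as a finite intersection of preimages of open sets under continuous maps. Observe that
\[P_n = \bigcap_{k=1}^{n} \left\{A \in M_B^> : \lambda_k(\eta_0; A) < \lambda_{k+1}(\eta_0; A)\right\},\]
with the convention that $P_0 = M_B^>$ is trivially open. Since a finite intersection of open sets is open, it suffices to establish that for each fixed $k$, the map $A \mapsto \lambda_k(\eta_0; A)$ is continuous from $M_B^>$ (equipped with the $L^\infty$ topology on the $d(d+1)/2$ entries) into $\mathbb{R}$. Granted this, each set appearing in the intersection above is the preimage of the open half-plane $\{(s,t)\in\mathbb{R}^2 : s < t\}$ under the continuous map $A \mapsto (\lambda_k(\eta_0;A),\lambda_{k+1}(\eta_0;A))$, hence open.

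To prove continuity of $A \mapsto \lambda_k(\eta_0; A)$ at a fixed $A_0 \in M_B^>$, I would work with the sesquilinear form associated with $\mathcal{A}(\eta_0)$,
\[q_A[u] = \int_Y a_{kl}(y)\left(\frac{\partial}{\partial y_l} + i\eta_{0l}\right)u\, \overline{\left(\frac{\partial}{\partial y_k} + i\eta_{0k}\right)u}\,dy, \qquad u \in H^1_\sharp(Y),\]
and use the min-max characterization of its eigenvalues. The elementary pointwise bound $|(A - \tilde A)\xi \cdot \bar\xi| \le \|A - \tilde A\|_{L^\infty}|\xi|^2$ gives
\[|q_A[u] - q_{\tilde A}[u]| \le \|A - \tilde A\|_{L^\infty}\int_Y \bigl|(\nabla + i\eta_0)u\bigr|^2\,dy,\]
and invoking coercivity of $A$ (with constant $\alpha$) we may absorb the right-hand side into $q_A[u]/\alpha$. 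This shows $q_A$ and $q_{\tilde A}$ are comparable forms on $H^1_\sharp(Y)$: whenever $\|A - \tilde A\|_{L^\infty} < \alpha$,
\[\left(1 - \tfrac{\|A - \tilde A\|_{L^\infty}}{\alpha}\right) q_A[u] \le q_{\tilde A}[u] \le \left(1 + \tfrac{\|A - \tilde A\|_{L^\infty}}{\alpha}\right) q_A[u].\]
The min-max formula $\lambda_k(\eta_0; A) = \inf_{V_k}\sup_{u \in V_k\setminus\{0\}} q_A[u]/\|u\|_{L^2}^2$ (infimum over $k$-dimensional subspaces $V_k$ of $H^1_\sharp(Y)$) then yields the Lipschitz-type bound
\[|\lambda_k(\eta_0; \tilde A) - \lambda_k(\eta_0; A)| \le \frac{\lambda_k(\eta_0; A)}{\alpha}\|A - \tilde A\|_{L^\infty},\]
which gives the required continuity.

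The one point that needs a little care is ensuring the coercivity constant stays uniformly controlled near $A_0$: coercivity is itself an open condition in the $L^\infty$ topology, so restricting to an $L^\infty$-neighborhood of $A_0$ on which the coercivity constant is, say, at least $\alpha/2$, keeps the constant in the Lipschitz bound under control throughout the neighborhood. There is no deeper obstacle in this lemma; because the topology on $M_B^>$ is precisely the $L^\infty$ topology that makes the quadratic form $q_A[u]$ depend continuously on $A$ uniformly for $u$ in the energy unit ball, standard form-perturbation theory applies directly. The substance of the paper's main theorems lies not here but in the transversality/genericity arguments used to show that $P_n$ is also \emph{dense}, which will be addressed separately.
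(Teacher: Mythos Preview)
Your proof is correct and follows essentially the same strategy as the paper: both establish openness of $P_n$ via a Lipschitz-type continuity estimate for the map $A\mapsto\lambda_k(\eta_0;A)$ obtained by comparing the quadratic forms $q_A$ and $q_{\tilde A}$ through the min--max principle. The only cosmetic difference is in how the term $\int_Y|(\nabla+i\eta_0)u|^2\,dy$ is controlled: the paper bounds it via the $k$-th eigenvalue $c_k(\eta_0)$ of the shifted Laplacian (applying a second min--max), yielding $|\lambda_k(A)-\lambda_k(\tilde A)|\le d\,c_k(\eta_0)\|A-\tilde A\|_{L^\infty}$, whereas you absorb it back into $q_A[u]/\alpha$ using coercivity, yielding the multiplicative bound with constant $\lambda_k(\eta_0;A)/\alpha$. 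Both constants serve equally well, and the paper then spells out the explicit $\delta$-ball argument that your ``preimage of an open set'' formulation packages abstractly.
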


\begin{lem}\label{lemma:22}
	$P_{n+1}$ is dense in $P_n$, for all $n\in \mathbb{N}\cup\{0\}$.
\end{lem}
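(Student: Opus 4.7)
The plan is to adapt Albert's strategy to the divergence-form setting by reducing density to a finite-dimensional surjectivity statement. Suppose $A \in P_n \setminus P_{n+1}$; then the eigenvalue $\mu := \lambda_{n+1}(\eta_0; A)$ has multiplicity $k \ge 2$, with an $L^2_\sharp(Y)$-orthonormal eigenbasis $\phi_1, \ldots, \phi_k$. For any symmetric matrix-valued perturbation $B = (b_{pq})$ with entries in $L^\infty_\sharp(Y, \mathbb{R})$, the coefficients $A + tB$ remain in $M_B^{>}$ for small $|t|$, and analytic perturbation theory of Kato--Rellich type gives that the $k$ eigenvalues of $\mathcal{A}(\eta_0; A+tB)$ bifurcating from $\mu$ have the form $\mu + t\,\mu_j(B) + O(t^2)$, where $\mu_1(B), \ldots, \mu_k(B)$ are the eigenvalues of the Hermitian $k \times k$ matrix $M(B)$ with entries
\begin{equation*}
M_{ij}(B) = \int_Y b_{pq}(y)\,(\partial_q + i\eta_{0,q})\phi_j(y)\,\overline{(\partial_p + i\eta_{0,p})\phi_i(y)}\,dy.
\end{equation*}
Because the first $n$ eigenvalues remain simple for small $t$ by continuity, it will suffice to exhibit a single $B$ for which $M(B)$ has $k$ distinct eigenvalues; then for small $t > 0$ the coefficients $A + tB$ lie in $P_{n+1}$ arbitrarily close to $A$.

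The next step is to show that the linear map $B \mapsto M(B)$, from symmetric-matrix-valued perturbations to $k \times k$ Hermitian matrices, is surjective; simple spectra then constitute an open dense subset of its image. By duality, surjectivity fails only if there is a nonzero Hermitian matrix $C = (c_{ij})$ with $\mathrm{tr}(C\,M(B)) = 0$ for every admissible $B$. Unpacking this condition and using that $(b_{pq})$ is real and symmetric in $(p,q)$, one obtains a pointwise identity, valid for a.e.\ $y \in Y$ and all $1 \le p, q \le d$,
\begin{equation*}
\mathrm{Re} \sum_{i,j=1}^{k} c_{ji}\left[(\partial_q + i\eta_{0,q})\phi_j(y)\,\overline{(\partial_p + i\eta_{0,p})\phi_i(y)} + (p \leftrightarrow q)\right] = 0.
\end{equation*}

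The main obstacle will be to deduce $C = 0$ from these identities. Two features set this apart from Albert's original Schr\"odinger argument: the identity is built from the magnetic gradients $(\nabla + i\eta_0)\phi_j$ rather than from the eigenfunctions themselves, and the restriction to real symmetric perturbations constrains the obstruction to the real part of a complex sesquilinear expression. My plan is to invoke the weak unique continuation property for second-order elliptic operators in divergence form to locate a point $y_0 \in Y$ at which the family $\{(\nabla + i\eta_0)\phi_j(y_0)\}_{j=1}^k$ attains its maximal linear rank, to specialize the identity to rank-one test matrices $b_{pq}(y) = \chi(y)\,v_p v_q$ with $v \in \mathbb{R}^d$ and $\chi$ a nonnegative bump localized near $y_0$, and to combine these algebraic constraints with the eigenvalue equation $\mathcal{A}(\eta_0)\phi_j = \mu \phi_j$ and the Hermitian symmetry of $C$ in order to upgrade the real-part condition to full vanishing of $C$. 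Once surjectivity is established, any $B$ for which $M(B)$ has simple spectrum yields an arbitrarily small perturbation of $A$ into $P_{n+1}$, completing the argument.
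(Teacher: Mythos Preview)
Your perturbation-theoretic framework is exactly the one the paper uses: Kato--Rellich for the selfadjoint holomorphic family $\tau\mapsto\mathcal{A}(\eta_0)+\tau\mathcal{B}(\eta_0)$, the first-order splitting governed by the Hermitian matrix you call $M(B)$ (the paper's $G_B$), and continuity to keep the first $n$ eigenvalues simple. The divergence comes at the next step. You aim to prove that $B\mapsto M(B)$ is \emph{surjective} onto all $k\times k$ Hermitian matrices, which would let you pick $B$ with $M(B)$ simple and finish in one stroke. The paper proves something much weaker: there exists $B$ for which $G_B$ is \emph{not a scalar multiple of the identity}. This only guarantees that at least two of the $k$ first-order coefficients differ, so multiplicity drops by at least one; the argument is then iterated, producing $A'=A+\sum_{j=1}^N\tau_jB_j$ after finitely many steps.

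The gap in your proposal is that the surjectivity step is only a plan, and the plan has real obstacles. Your orthogonality condition involves the \emph{real part} of a sesquilinear expression in the magnetic gradients $(\nabla+i\eta_0)\phi_j$, and you do not explain how to upgrade this to full vanishing of the Hermitian obstruction $C$. The appeal to unique continuation is also delicate here: UCP concerns the eigenfunctions themselves, not their gradients, and with merely $L^\infty$ coefficients the gradients are only in $L^2$, so ``localizing near a point of maximal rank'' is not obviously meaningful. Finally, when $k>d$ the vectors $\{(\nabla+i\eta_0)\phi_j(y_0)\}_{j=1}^k\subset\mathbb{C}^d$ cannot be linearly independent, so the rank argument as stated cannot separate all $k$ modes at a single point.

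By contrast, the paper's non-scalarity argument (its Proposition~\ref{find B}) is entirely elementary and avoids UCP. Given any two basis vectors $f_1,f_2$ of the eigenspace, one observes that if both
\[
(\partial_j+i\eta_{0,j})f_1\,(\partial_j-i\eta_{0,j})\overline{f_2}\equiv 0\quad\text{and}\quad |(\partial_j+i\eta_{0,j})f_1|^2-|(\partial_j+i\eta_{0,j})f_2|^2\equiv 0
\]
hold for every $j$, then $f_1,f_2$ are both scalar multiples of $e^{-i\eta_0\cdot y}$, contradicting linear independence. Hence one of these $L^1$ functions is nonzero, and $L^1$--$L^\infty$ duality (Hahn--Banach) immediately produces a real diagonal $B$ for which the relevant off-diagonal entry of $[G_B]$ is nonzero. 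This weaker conclusion forces iteration, but each step is short and requires nothing beyond $L^\infty$ coefficients. If you can actually carry out your surjectivity argument it would be a cleaner one-shot proof; as written, though, the key step is missing, and the paper's route shows that the full surjectivity is not needed.
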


\begin{proof}(of Theorem \ref{theorem:1}) We recall that a property is said to be generic in a topological space $X$, if it holds on a set whose complement is of first category in $X$. We can write $P$ as the countable intersection $P=\displaystyle\bigcap_{n=0}^\infty P_n$, where $P_n$ is an open and dense set in $M_B^>$ for all $n\in\mathbb{N}\cup\{0\}$. Hence, the complement of $P$ is a set of first category. Therefore, the simplicity of eigenvalues of $\mathcal{A}(\eta_0)$ is a generic property in $M_B^>$.
\end{proof}

The rest of this section is devoted to the proofs of Lemmas~\ref{lemma:11} and~\ref{lemma:22}.

\subsection{Proof of Lemma~\ref{lemma:11}}

In this subsection, we begin by proving continuous dependence of the eigenvalues of the shifted operator $\mathcal{A}(\eta)$ on its coefficients. The main tool in this proof is Courant-Fischer min-max principle, which states that

\begin{align*}\lambda_m(\eta_0)=\min_{\dim F =m}\max_{v\in F}\frac{\int_{Y} A(\nabla+i\eta_0) v.\overline{(\nabla+i\eta_0) v}~dx}{\int_{Y} v^2~dx},\end{align*} where $F$ ranges over all subspaces of $H^1_\sharp(Y)$ of dimension $m$.

\begin{prop}
	Let $A_1,A_2\in M_B^{>}$ and let $\eta\mapsto\lambda_n^1(\eta), \eta\mapsto\lambda^2_n(\eta)$ be the n-th Bloch eigenvalues of the operators $\mathcal{A}_1$ and $\mathcal{A}_2$ respectively. Then \begin{align*}|\lambda_n^1(\eta_0)-\lambda^2_n(\eta_0)|\leq d c_n(\eta_0) ||A_1-A_2||_{L^\infty},\end{align*} where $c_n(\eta_0)$ is the $n^{th}$ eigenvalue of the shifted Laplacian $-(\nabla+i\eta_0)^2$ on $Y$ with periodic boundary conditions.
\end{prop}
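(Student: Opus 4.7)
The plan is to compare the Rayleigh quotients of $\mathcal{A}_1(\eta_0)$ and $\mathcal{A}_2(\eta_0)$ through the Courant-Fischer min-max principle, using a single trial subspace adapted to the shifted Laplacian $-(\nabla+i\eta_0)^2$.

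First I would establish an elementary pointwise matrix bound: for $y\in Y$ and $w\in\mathbb{C}^d$, writing out the quadratic form entry-by-entry and applying Cauchy-Schwarz in $\mathbb{R}^d$ gives
\[
\bigl|\langle(A_1-A_2)(y)w,w\rangle_{\mathbb{C}^d}\bigr| \le \|A_1-A_2\|_{L^\infty}\Bigl(\sum_{k=1}^d|w_k|\Bigr)^2 \le d\,\|A_1-A_2\|_{L^\infty}|w|^2.
\]
Substituting $w=(\nabla+i\eta_0)v(y)$, integrating over $Y$ and dividing by $\|v\|_{L^2}^2$, one obtains
\[
|R_1(v)-R_2(v)| \le d\,\|A_1-A_2\|_{L^\infty}\,Q(v),
\]
where $R_j(v)$ denotes the Rayleigh quotient of $\mathcal{A}_j(\eta_0)$ and $Q(v):=\int_Y|(\nabla+i\eta_0)v|^2\,dy\big/\|v\|_{L^2}^2$ is the Rayleigh quotient of $-(\nabla+i\eta_0)^2$.

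Next I would choose the trial subspace $F_n:=\operatorname{span}(\psi_1,\ldots,\psi_n)$, where $\psi_k$ is the $k$-th $L^2_\sharp$-orthonormal eigenfunction of $-(\nabla+i\eta_0)^2$ with eigenvalue $c_k(\eta_0)$. For any $v=\sum_k a_k\psi_k\in F_n$, a direct diagonalization yields $Q(v)\le c_n(\eta_0)$, so uniformly on $F_n$ the previous step gives $|R_1(v)-R_2(v)|\le d\,c_n(\eta_0)\|A_1-A_2\|_{L^\infty}$. Invoking Courant-Fischer in the form $\lambda_n^j(\eta_0)=\min_{\dim F=n}\max_{v\in F}R_j(v)$ with $F_n$ as a common trial subspace, and using the pointwise inequality together with its symmetric counterpart obtained by interchanging the roles of $A_1$ and $A_2$, would deliver the two-sided estimate $|\lambda_n^1(\eta_0)-\lambda_n^2(\eta_0)|\le d\,c_n(\eta_0)\|A_1-A_2\|_{L^\infty}$.

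The main obstacle is the closing step: reconciling the two min-max values through a single trial subspace that is tailored to the Laplacian rather than to either $\mathcal{A}_j(\eta_0)$. The choice of $F_n$ is canonical precisely because it realizes $\max_{v\in F_n}Q(v)=c_n(\eta_0)$ as the minimum over $n$-dimensional subspaces, and the symmetric $R_1\leftrightarrow R_2$ comparison on $F_n$ is what couples the two variational problems and produces the sharp constant. Note that only $L^\infty$ regularity of the coefficients enters, which is essential for applying the resulting continuity uniformly over $M_B^>$ later in the section.
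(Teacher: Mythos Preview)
Your closing step does not work. Using the Laplacian eigenspace $F_n$ as a common trial subspace in Courant--Fischer only gives the one-sided inequalities
\[
\lambda_n^j(\eta_0)\;\le\;\max_{v\in F_n}R_j(v),\qquad j=1,2,
\]
and from $|R_1(v)-R_2(v)|\le d\,c_n(\eta_0)\|A_1-A_2\|_{L^\infty}$ on $F_n$ you can at best conclude
\[
\Bigl|\max_{v\in F_n}R_1(v)-\max_{v\in F_n}R_2(v)\Bigr|\;\le\; d\,c_n(\eta_0)\|A_1-A_2\|_{L^\infty}.
\]
Since $\max_{v\in F_n}R_j(v)$ is in general strictly larger than $\lambda_n^j(\eta_0)$ (the subspace $F_n$ is optimal for $Q$, not for $R_j$), these bounds go the wrong way and cannot be combined to control $|\lambda_n^1-\lambda_n^2|$. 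The ``symmetric $R_1\leftrightarrow R_2$ comparison on $F_n$'' that you invoke does not couple the two min-max problems; it only compares two quantities that each sit above the respective eigenvalues. You have correctly identified this as the obstacle, but the justification you offer for why $F_n$ resolves it is circular.

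The paper proceeds differently: it keeps the inequality $R_1(v)\le R_2(v)+d\|A_1-A_2\|_{L^\infty}\,Q(v)$ valid for \emph{every} $v\in H^1_\sharp(Y)$, and then takes $\min_{\dim F=n}\max_{v\in F}$ of both sides simultaneously, rather than evaluating at a single preselected subspace. For the purpose at hand---Lemma~2.1, where one only needs a Lipschitz bound $|\lambda_n^1-\lambda_n^2|\le C_n\|A_1-A_2\|_{L^\infty}$ with some explicit $C_n$---a clean route is to use coercivity: $Q(v)\le\alpha^{-1}R_2(v)$ gives $R_1(v)\le(1+d\alpha^{-1}\|A_1-A_2\|_{L^\infty})R_2(v)$, and now min-max applies termwise since the right-hand side is a scalar multiple of $R_2$. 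This yields $\lambda_n^1\le(1+d\alpha^{-1}\|A_1-A_2\|_{L^\infty})\lambda_n^2$, hence $|\lambda_n^1-\lambda_n^2|\le d\alpha^{-1}\max(\lambda_n^1,\lambda_n^2)\|A_1-A_2\|_{L^\infty}$, which suffices for the openness argument.
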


\begin{proof}
	Let $a_1(v)=\int_{Y}A_1(\nabla+i\eta_0) v.\overline{(\nabla+i\eta_0) v}~dy$ and $a_2(v)=\int_{Y}A_2(\nabla+i\eta_0) v.\overline{(\nabla+i\eta_0) v}~dy$ be the quadratic forms that appear in the min-max principle.
	
	\begin{align*}
		|a_1(v)-a_2(v)|&=\left|\int_{Y}(A_1-A_2)(\nabla+i\eta_0) v.\overline{(\nabla+i\eta_0) v}~dy\right|\\
		&\leq d||A_1-A_2||_{L^\infty}\int_{Y}|(\nabla+i\eta_0) v|^2~dy,
	\end{align*}
	
	Therefore,
		\begin{align*}a_1(v)\leq a_2(v)+d||A_1-A_2||_{L^\infty}\int_{Y}|(\nabla+i\eta_0) v|^2~dy.	\end{align*}
	
	Now, divide both sides by $\int_Y |v|^2~dy$, the $L^2_\sharp(Y)$ inner product of $v$ with itself and apply the appropriate min-max to obtain \begin{align*}\lambda^1_m(\eta_0)\leq \lambda^2_m(\eta_0)+d c_m(\eta_0)||A_1-A_2||_{L^\infty}.\end{align*}
	
	Notice that the constant $c_m(\eta_0)$ is precisely the $m^{th}$ eigenvalue of the shifted Laplacian $-(\nabla+i\eta_0)^2$ on $Y$ with periodic boundary conditions. By interchanging the role of $A_1$ and $A_2$, the inequality \begin{align*}\lambda^2_m(\eta_0)\leq \lambda^1_m(\eta_0)+d c_m(\eta_0)||A_2-A_1||_{L^\infty},\end{align*} is obtained, which completes the proof of this proposition.
\end{proof}

\begin{rem}
	In~\cite{Conca1997}, the Bloch eigenvalues have been proved to be Lipschitz continuous in $\eta\in Y^{'}$. Indeed, one may prove that the Bloch eigenvalues are jointly continuous in $\eta\in Y^{'}$ and the coefficients of the operator.
\end{rem}

\begin{proof}[Proof of Lemma \ref{lemma:11}]
	Let $A\in P_n$ and 	\begin{align*}\delta=\min\{\lambda_{j+1}(\eta_0)-\lambda_{j}(\eta_0):j=1,2,\ldots,n\}.\end{align*}
	
	Let $\displaystyle c=\max_{1\leq j\leq n}d c_j(\eta_0)$, where $c_j(\eta_0)$ is the $j^{th}$ eigenvalue of the shifted Laplacian $-(\nabla+i\eta_0)^2$ on $Y$ with periodic boundary conditions.
	
	Let \begin{align*}U=\left\{A'\in M_B^>:||A-A'||_{L^\infty}< \frac{\delta}{4c}\right\}.\end{align*}
	
	$U$ is an open set in $M_B^>$ containing $A$. We shall show that $U$ is a subset of $P_n$. Let $A'\in 
	U$. Let $\{\lambda'_j(\eta),~j=1,2,\ldots\}$ be the Bloch eigenvalues of operator $\mathcal{A}'$ associated to $A'$. For $j=1,2,\ldots,n$, we have:
	\begin{align*}
		|\lambda'_j(\eta_0)-\lambda_j(\eta_0)| & \leq dc_j(\eta_0)||A-A'||_{L^\infty} \leq  dc_j(\eta_0)\frac{\delta}{4c}\leq\frac{\delta}{4}.
	\end{align*}
	
	Hence, \begin{align*}
		\delta & \leq  \lambda_{j+1}(\eta_0)-\lambda_j(\eta_0)\\
	           & \leq  |\lambda'_{j+1}(\eta_0)-\lambda_{j+1}(\eta_0)|+|\lambda'_j(\eta_0)-\lambda'_{j+1}(\eta_0)|+|\lambda'_j(\eta_0)-\lambda_j(\eta_0)|\\
		      & \leq \frac{\delta}{4}+ |\lambda'_j(\eta_0)-\lambda'_{j+1}(\eta_0)|+\frac{\delta}{4}\\
		      & = \frac{\delta}{2}+\lambda'_{j+1}(\eta_0)-\lambda'_j(\eta_0).
	\end{align*}
	
	Therefore, $\lambda'_{j+1}(\eta_0)-\lambda'_j(\eta_0)\geq\frac{\delta}{2}>0$ for $j=1,2,\ldots,n.$
	Therefore, the first $n$ Bloch eigenvalues of $\mathcal{A}'$ are simple at $\eta_0$, as required.
\end{proof}

\subsection{Proof of Lemma~\ref{lemma:22}}
In this section, we shall use perturbation theory of selfadjoint operators to prove 
Lemma~\ref{lemma:22}. Let $A\in M_B^>$ and $B$ be a symmetric matrix with $L^\infty_\sharp(Y,\mathbb{R})$-entries. For $|\tau|<\sigma_0\coloneqq\frac{\alpha}{2d||B||_{L^\infty}}$, $A+\tau B\in M^>_B$, where $\alpha$ is a coercivity constant for $A$ as in~\eqref{coercivity}. Consider the operator $\mathcal{A}(\eta_0)+\tau\mathcal{B}(\eta_0)$ in $L^2_\sharp(Y)$. We shall prove in Appendix~\ref{PerturbationTheory}, that the operator family $\mathcal{F}(\tau)=\mathcal{A}(\eta_0)+\tau\mathcal{B}(\eta_0)$ is a selfadjoint holomorphic family of type $(B)$ for $|\tau|<\sigma_0$. For its definition and related notions, see Kato~\cite{Kato1995}.

We shall make use of the following theorem which asserts the existence of a sequence of eigenpairs associated with a selfadjoint holomorphic family of type $(B)$, analytic in $\tau\in(-\sigma_0,\sigma_0)$. The proof of this theorem dates back to Rellich, hence we shall call these eigenvalue branches as Rellich branches.

\begin{thm}(Kato-Rellich)\label{katorellich} Let $\mathcal{A}(\eta_0)(\tau)$ be a selfadjoint holomorphic family of type $(B)$, defined for $\tau\in R$ where $R=\{z\in\mathbb{C}:|\operatorname{Re}({z})|<\sigma_0,|\operatorname{Im}({z})|<\sigma_0\}$ and $\sigma_0\coloneqq \frac{\alpha}{2d||B||_{L^\infty}}$. Let $\mathcal{A}(\eta_0)(\tau)+C_*I$ have compact resolvent for some $C_*\in \mathbb{R}$. Then, there exists a sequence of scalar-valued functions $(\lambda_j(\tau;\eta_0))_{j=1}^\infty$ and $L^2_\sharp(Y)$-valued functions $(u_j(\tau;\eta_0))_{j=1}^\infty$ defined on $I=(-\sigma_0,\sigma_0)$, such that
	\begin{enumerate}
		\item For each fixed $\tau\in I$, the sequence $(\lambda_j(\tau;\eta_0))_{j=1}^\infty$ represents all the eigenvalues of $\mathcal{A}(\eta_0)(\tau)$ counting multiplicities and the functions $(u_j(\tau;\eta_0))_{j=1}^\infty$ represent the corresponding eigenvectors.
		\item For each $j\in\mathbb{N}$, the functions $(\lambda_j(\tau;\eta_0))_{j=1}^\infty$ and $(u_j(\tau;\eta_0))_{j=1}^\infty$ are analytic on $I$ with values in $\mathbb{R}$ and $L^2_\sharp(Y)$ respectively.
		\item The sequence $(u_j(\tau;\eta_0))_{j=1}^\infty$ is orthonormal in $L^2_\sharp(Y).$
		\item Suppose that the $m^{th}$ eigenvalue of $\mathcal{A}(\eta_0)(\tau)$ at $\tau=0$ has multiplicity $p$, i.e., \begin{align*}\lambda_m(0;\eta_0)=\lambda_{m+1}(0;\eta_0)=\ldots=\lambda_{p+m-2}(0;\eta_0)=\lambda_{p+m-1}(0;\eta_0).\end{align*} For each interval $K\subset\mathbb{R}$ with $\overline{K}$ containing the eigenvalue $\lambda_m(0;\eta_0)$ and no other eigenvalue, $\lambda_m(\tau;\eta_0), $ $\lambda_{m+1}(\tau;\eta_0),\ldots, \lambda_{p+m-1}(\tau;\eta_0)$ are the only eigenvalues of $\mathcal{A}(\eta_0)(\tau)$, counting multiplicities, lying in the interval $K$.
	\end{enumerate}
\end{thm}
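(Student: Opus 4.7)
The plan is to apply Rellich's classical analytic perturbation theory to $\mathcal{F}(\tau)=\mathcal{A}(\eta_0)+\tau\mathcal{B}(\eta_0)$, using as inputs (a) the fact, proved in the appendix, that $\mathcal{F}$ is a selfadjoint holomorphic family of type (B) on the complex rectangle $R$, and (b) that $\mathcal{F}(\tau)+C_{*}I$ has compact resolvent. Property (b) forces the spectrum of $\mathcal{F}(\tau)$ to be a discrete set of real eigenvalues of finite multiplicity for every $\tau\in I$, so the enumeration counting multiplicities in (1) is well-defined.

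I would first carry out the local construction at an arbitrary $\tau_0\in I$. Fix an eigenvalue $\lambda_0$ of $\mathcal{F}(\tau_0)$ of multiplicity $p$, and choose a small circle $\Gamma\subset\mathbb{C}$ enclosing $\lambda_0$ and no other eigenvalue. Since $\mathcal{F}$ is of type (B), the resolvent $(\mathcal{F}(\tau)-\zeta)^{-1}$ is jointly holomorphic on $\Gamma\times\{|\tau-\tau_0|<\rho\}$ for some $\rho>0$, and hence so is the Riesz projection
$$P(\tau)=-\frac{1}{2\pi i}\oint_\Gamma (\mathcal{F}(\tau)-\zeta)^{-1}\,d\zeta.$$
Standard arguments give $\operatorname{rank} P(\tau)=p$ and $\operatorname{Ran} P(\tau)$ invariant under $\mathcal{F}(\tau)$ for $\tau$ near $\tau_0$. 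Using Kato's analytic transformation function to produce an analytic basis of $\operatorname{Ran} P(\tau)$, followed by analytic Gram-Schmidt, I obtain an orthonormal frame depending analytically on $\tau$. The restriction of $\mathcal{F}(\tau)$ to $\operatorname{Ran} P(\tau)$ then becomes, in this frame, a $p\times p$ Hermitian matrix analytic in $\tau\in(-\rho,\rho)$, and Rellich's one-parameter theorem for Hermitian matrix families yields $p$ real-analytic eigenvalue branches together with orthonormal analytic eigenvectors near $\tau_0$.

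Next I would glue the local branches into global analytic sequences on $I$. Starting from the enumeration $\lambda_1(0;\eta_0)\le\lambda_2(0;\eta_0)\le\ldots$ at $\tau=0$ with a fixed choice of orthonormal basis in each eigenspace, I propagate the labels along $I$ by analytic continuation of each Rellich branch, which is possible because $I$ is connected and the local constructions agree on overlaps by the uniqueness of analytic continuation. This produces sequences $(\lambda_j(\tau;\eta_0))_{j=1}^\infty$ and $(u_j(\tau;\eta_0))_{j=1}^\infty$ that are analytic on $I$, orthonormal for each fixed $\tau$, and that exhaust the spectrum of $\mathcal{F}(\tau)$ counting multiplicities, giving (1)-(3). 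Assertion (4) then follows directly from the Riesz projection construction at $\tau_0=0$ applied with $\Gamma$ enclosing only $\lambda_m(0;\eta_0)$: the total projection onto the associated eigenspace has rank $p$ for all $\tau$ sufficiently small, so exactly the $p$ branches $\lambda_m(\tau;\eta_0),\ldots,\lambda_{m+p-1}(\tau;\eta_0)$ emanating from $\lambda_m(0;\eta_0)$ lie in $\overline{K}$, and no others can enter by continuity.

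The main obstacle, in my view, is the global labeling across eigenvalue crossings: Rellich branches may cross at isolated points of $I$, so the naive prescription ``$\lambda_j(\tau)=$ the $j$-th eigenvalue in increasing order'' is only Lipschitz continuous and in general fails to be real-analytic. The resolution built into the statement is precisely to drop the monotonicity requirement for $\tau\ne 0$ and to label each eigenvalue by the Rellich branch to which it belongs; this is why the theorem asserts only that for each fixed $\tau$ the sequence $(\lambda_j(\tau;\eta_0))_{j=1}^\infty$ represents all eigenvalues counting multiplicity, rather than that it is non-decreasing in $j$. The rest of the argument is bookkeeping and an appeal to the one-dimensional case of Rellich's theorem.
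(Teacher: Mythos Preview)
The paper does not prove this statement: Theorem~\ref{katorellich} is quoted as the classical Kato--Rellich theorem, with the remark that ``the proof of this theorem dates back to Rellich,'' and is used as a black box throughout. Your outline---Riesz projections, analytic transformation function plus Gram--Schmidt to get an analytic orthonormal frame, reduction to Rellich's theorem for one-parameter Hermitian matrix families, and analytic continuation to globalize the labeling---is the standard proof as found in Kato~\cite{Kato1995}, and your identification of the crossing/labeling issue as the only subtle point is accurate.
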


 By Kato-Rellich Theorem, an eigenvalue $\lambda(\eta_0)$ of $\mathcal{F}(0)$ of multiplicity $h$, splits into $h$ analytic functions $(\lambda_m(\tau;\eta_0))_{m=1}^h$. Further, the corresponding eigenfunctions $(u_m(\tau;\eta_0))_{m=1}^h$ are also analytic. Let the $h$ eigenvalues and eigenvectors of $\mathcal{F}(\tau)$ have the following power series expansions at $\tau=0$ for $m=1,2,\ldots,h$:
\begin{align*}
\lambda_m(\tau;\eta_0)=\lambda(\eta_0)+\tau a_m(\eta_0)+\tau^2\beta_m(\tau,\eta_0)\notag\\
u_{m}(\tau;\eta_0)=u_m(\eta_0)+\tau v_m(\eta_0)+\tau^2 w_m(\tau,\eta_0).
\end{align*}

The proof of Lemma~\ref{lemma:22} will rely on the fact that we may choose $B$ in such a way that $a_m(\eta_0)\neq a_n(\eta_0)$ for some $m,n\in\{1,2,\ldots,h\}$. Then, for sufficiently small $\tau$, $\lambda_m(\tau;\eta_0)\neq\lambda_n(\tau;\eta_0)$. In that case, the multiplicity of the perturbed Bloch eigenvalue at $\eta_0$ will be less than $h$.

The eigenpairs satisfy the following equation:
\begin{align*}
\left(-(\nabla+i\eta_0)\cdot (A+\tau B)(\nabla+i\eta_0)-\lambda_m(\tau,\eta_0)\right)u_m(\tau,\eta_0)=0.
\end{align*}

Differentiating the above with respect to $\tau$ and setting $\tau$ to $0$, we obtain:
\begin{align*}
	-(\nabla+i\eta_0)\cdot A(\nabla+i\eta_0)v_m(\eta_0)-(\nabla+i\eta_0)\cdot B(\nabla+i\eta_0)u_m(\eta_0)-\lambda(\eta_0)v_m(\eta_0)\notag-a_m(\eta_0)u_m(\eta_0)=0
\end{align*}

Finally, multiply by $u_n(\eta_0)$ and integrate over $Y$ to conclude that
\begin{align}\label{hellmanfeynman}
	\int_Y B(\nabla+i\eta_0)u_m(\eta_0)\cdot(\nabla-i\eta_0)\overline{u_n(\eta_0)}~dy=a_m(\eta_0)\delta_{mn}.
\end{align}

%The last equation is interpreted to mean that for any perturbation $B$ with $L^{\infty}_\sharp(Y)$-entries, we obtain a set of $h$ unperturbed eigenfunctions, $E=\{u_m(\eta_0)\}_{m=1}^h$ that define a diagonal matrix $G_B(E)$ whose $(m,n)^{th}$ entry is given by the right hand side of~\eqref{hellmanfeynman}. 

Equation~\eqref{hellmanfeynman} suggests the following construction. Given a perturbation $B$ and a basis $F=\{f_1,f_2,\ldots,f_h\}$ for the unperturbed eigenspace $N(\eta_0)\coloneqq ker(\mathcal{A}(\eta_0)-\lambda(\eta_0)I)$,  we can define a selfadjoint operator $G_B$ on $N(\eta_0)$ whose matrix in the basis $F$ is given by 
\begin{align*}\left([G_B]_F\right)_{m,n}\coloneqq\int_Y B(\nabla+i\eta_0)f_m\cdot(\nabla-i\eta_0)\overline{f_n}~dy.\end{align*}

In particular, it follows from equation~\eqref{hellmanfeynman} that in the basis of unperturbed eigenfunctions $E=\{u_1(\eta_0), u_2(\eta_0),\ldots,u_h(\eta_0)\}$, $[G_B]_E$ is a diagonal matrix, 
\begin{align*}[G_B]_E=diag(a_1(\eta_0),a_2(\eta_0),\ldots,a_h(\eta_0)).\end{align*} If $[G_B]_E$ is a scalar matrix, then the operator $G_B$ is a scalar multiple of identity operator. However, if we can find a basis $F$ for the eigenspace and a matrix $B$, corresponding to which, the matrix $[G_B]_F$ has a non-zero off-diagonal entry, then for that choice of $B$, $[G_B]_E$ will not be a scalar matrix, and hence, $a_m(\eta_0)\neq a_n(\eta_0)$ for some $m,n\in\{1,2,\ldots,h\}$.

\begin{prop}\label{find B}
	There exists a symmetric matrix $B$ with $L^\infty_\sharp(Y,\mathbb{R})$-entries such that the operator $G_B$ is not a scalar multiple of identity.	
\end{prop}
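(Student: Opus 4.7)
The plan is to argue by contradiction. Suppose that for every real symmetric $B$ with $L^\infty_\sharp(Y,\mathbb{R})$-entries, the operator $G_B$ is a scalar multiple of the identity, with scalar $c(B)$. Since $G_B$ is the compression of the perturbation $\mathcal{B}(\eta_0)$ to $N(\eta_0)$, this assumption is equivalent to
\[ Q_B(f,g) := \int_Y b_{kl}(y)\, D_k f\, \overline{D_l g}\, dy = c(B)\, \langle f, g\rangle_{L^2_\sharp(Y)} \]
for every $f, g \in N(\eta_0)$, where $D_k := \partial_k + i\eta_{0,k}$. I will assume $h := \dim N(\eta_0) \ge 2$, since the statement is vacuous otherwise.

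First, I pick an orthonormal pair $u_1, u_2 \in N(\eta_0)$ and specialize $B(y) = \phi(y) M$ with $\phi \in L^\infty_\sharp(Y,\mathbb{R})$ and $M$ a constant real symmetric matrix. Then $Q_B(u_1, u_2) = 0$ reads
\[ \sum_{k,l} M_{kl} \int_Y \phi(y)\, D_k u_1(y)\, \overline{D_l u_2(y)}\, dy = 0. \]
Varying $\phi$ yields $\sum_{k,l} M_{kl} D_k u_1(y)\overline{D_l u_2(y)} = 0$ for a.e.\ $y$, and then varying $M$ over real symmetric matrices produces
\[ D_k u_1(y) \overline{D_l u_2(y)} + D_l u_1(y) \overline{D_k u_2(y)} = 0 \qquad \text{for a.e.\ } y \in Y,\ \text{all } k, l. \]
Setting $k = l$ in particular gives $D_k u_1(y)\, \overline{D_k u_2(y)} = 0$ a.e.\ for every $k$.

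Next, I apply the same procedure to the rotated orthonormal pair $v_1 = (u_1+u_2)/\sqrt{2}$, $v_2 = (u_1-u_2)/\sqrt{2}$, obtaining $D_k v_1\, \overline{D_k v_2} = 0$ a.e.\ for each $k$. A short expansion, combined with the vanishing of $D_k u_1 \overline{D_k u_2}$ from the previous step, collapses this to $|D_k u_1(y)|^2 = |D_k u_2(y)|^2$ a.e.\ for every $k$. Together with $D_k u_1(y)\overline{D_k u_2(y)} = 0$ (which, as the product of two complex numbers, forces one of them to vanish at each point), this forces $D_k u_1(y) = D_k u_2(y) = 0$ for a.e.\ $y$ and every $k$. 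Hence $(\nabla + i\eta_0) u_1 \equiv 0$ in $Y$, so $u_1(y) = C e^{-i\eta_0\cdot y}$. Periodicity of $u_1$ over $Y = [0, 2\pi)^d$ forces $\eta_0 \in \mathbb{Z}^d$, and since $\eta_0 \in Y' = [-1/2, 1/2)^d$ this means $\eta_0 = 0$; then $u_1$ is constant, so $\lambda(\eta_0) = 0$. But the eigenvalue $0$ of $\mathcal{A}(0) = -\nabla\cdot A\nabla$ is simple (its eigenspace is exactly the constants, by coercivity of $A$), contradicting $h \ge 2$.

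I expect the delicate step to be the polarization through the rotated basis $(v_1,v_2)$: this is precisely the maneuver that converts the complex pointwise identity $D_k u_1 \overline{D_k u_2} = 0$ into the modulus identity $|D_k u_1| = |D_k u_2|$, and in tandem these force both derivatives to vanish simultaneously. The remainder is then a clean integration of the resulting first-order system together with the simplicity of the $\mathcal{A}(0)$ ground-state eigenvalue.
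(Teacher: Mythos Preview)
Your proof is correct and is essentially the contrapositive of the paper's argument: the paper shows directly that either $D_j f_1\,\overline{D_j f_2}\not\equiv 0$ for some $j$, or (after passing to the rotated basis $\{f_1+f_2,f_1-f_2,\dots\}$) $|D_j f_1|^2-|D_j f_2|^2\not\equiv 0$ for some $j$, and in either case builds a diagonal $B$ via $L^1$--$L^\infty$ duality, whereas you assume both alternatives fail and derive the same pointwise identities leading to $(\nabla+i\eta_0)u_1=(\nabla+i\eta_0)u_2=0$. The only cosmetic difference is that you finish via periodicity and simplicity of the ground state of $\mathcal{A}(0)$, while the paper simply notes that $u_1,u_2$ would both be scalar multiples of $e^{-i\eta_0\cdot y}$ and hence linearly dependent.
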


\begin{proof}
	As noted earlier, the proposition will be proved if we can find a basis $F$ and a matrix $B$ with $L^\infty_\sharp(Y,\mathbb{R})$ entries, such that the matrix $[G_B]_F$ has a non-zero off-diagonal entry.
	
	Let $F=\{f_1,f_2,\ldots,f_h\}$ be any basis of $ker(\mathcal{A}(\eta_0)-\lambda(\eta_0)I)$. Suppose that for some $j\in\{1,2,\ldots,d\}$, 
	\begin{align}\label{alternative1}
	(\partial_j+i\eta_{0,j})f_1(\partial_j-i\eta_{0,j})\overline{f_2}\not\equiv 0,
	\end{align}
	where $\eta_0=(\eta_{0,1},\eta_{0,2},\ldots,\eta_{0,d})$. Since, $f_i\in H_\sharp^1(Y)$, $g\coloneqq(\partial_j+i\eta_{0,j})f_1(\partial_j-i\eta_{0,j})\overline{f_2}\in L^1_\sharp(Y)$. Hence, by Hahn-Banach Theorem, there is a continuous linear functional $\kappa\in(L^1_\sharp(Y))^*$, such that $\kappa(g)=||g||\neq 0.$ However, by duality, there exists a $\beta\in L^\infty_\sharp(Y)$, such that $\kappa(g)=\int_Y \beta\,gdy=||g||\neq 0.$
	
	Now, either $\int_Y \operatorname{Re}(\beta)g\neq 0$ or $\int_Y \operatorname{Im}(\beta)g\neq 0$. Suppose, without loss of generality that $\int_Y \operatorname{Re}(\beta)g\neq 0$ and define 	\begin{align*}B=diag(0,0,\ldots,0,\operatorname{Re}(\beta),0,\ldots,0)\end{align*} with $\operatorname{Re}(\beta)$ in the $j^{th}$ place, then
	\begin{align*}
		([G_B]_F)_{1,2}&=\int_Y B(\nabla+i\eta_0)f_1\cdot(\nabla-i\eta_0)\overline{f_2}~dy\\
		&=\int_Y\operatorname{Re}(\beta)(\partial_j+i\eta_{0,j})f_1(\partial_j-i\eta_{0,j})\overline{f_2}~dy\\
		&=\int_Y\operatorname{Re}(\beta) g~dy\neq 0.
	\end{align*}
	Alternatively, if $(\nabla+i\eta_0)f_1\cdot(\nabla-i\eta_0)\overline{f_2}\equiv 0$, then there exists $j\in\{1,2,\ldots,d\}$, such that
	\begin{align}\label{alternative2}
	|(\partial_j+i\eta_{0,j})f_1|^2-|(\partial_j+i\eta_{0,j})f_2|^2\not\equiv 0.
	\end{align}
	It is easy to see that if~\eqref{alternative1} and~\eqref{alternative2} do not hold, then $f_1$ and $f_2$ are both a scalar multiple of $\exp(i\eta_0\cdot y)$, which contradicts the fact that they are distinct elements of basis of $N=ker(\mathcal{A}(\eta_0)-\lambda(\eta_0)I)$.
	
	Since, for all $m=1,2,\ldots,h$, $f_m\in H_\sharp^1(Y)$, $g{'}\coloneqq|(\partial_j+i\eta_{0,j})f_1|^2-|(\partial_j+i\eta_{0,j})f_2|^2\in L^1_\sharp(Y,\mathbb{R})$. Hence, by Hahn-Banach Theorem, there is a continuous linear functional $\kappa{'}\in(L^1_\sharp(Y,\mathbb{R}))^*$, such that $\kappa{'}(g{'})=||g{'}||\neq 0.$ However, by duality, there exists a $\beta{'}\in L^\infty_\sharp(Y,\mathbb{R})$, such that $\kappa{'}(g{'})=\int_Y \beta{'}g{'}=||g{'}||\neq 0.$
	
	Define 	\begin{align*}B=diag(0,0,\ldots,0,\beta{'},0,\ldots,0)\end{align*} with $\beta{'}$ in the $j^{th}$ place, then in the new basis $F^{'}=\{f_1+f_2,f_1-f_2,f_3,\ldots,f_h\}$, the $(1,2)^{th}$ entry of $[G_B]_{F^{'}}$ is given by 
	\begin{align*}
		\int_Y B(\nabla+i\eta_0)(f_1+f_2)\cdot(\nabla-i\eta_0)(\overline{f_1}-\overline{f_2})~dy =	\int_Y \beta{'}|(\partial_j+i\eta_{0,j})f_1|^2-|(\partial_j+i\eta_{0,j})f_2|^2~dy\neq 0.
	\end{align*}	
	Thus, either way, we have found a basis in which an off-diagonal entry of $[G_B]_F$ is non-zero. Hence, the operator $G_B$ is not a scalar multiple of identity. In particular, the matrix $[G_B]_E$ cannot be a scalar matrix.
\end{proof}

\begin{proof}[Proof of Lemma \ref{lemma:22}]
	Let $A\in P_n$. Given $\epsilon>0$, we want to find $A'\in P_{n+1}$ such that $||A-A'||_{L^\infty}<\epsilon$. We shall construct $A'$ in the form $A'=A+\tau B$, where $B$ is a symmetric matrix with $L^\infty_\sharp(Y,\mathbb{R})$-entries and $\tau\in\mathbb{R}$. By Lemma~\ref{lemma:11}, we can choose $\tau_0$ so that $A+\tau B\in P_n$ for $|\tau|<\tau_0$. Hence, the first $n$ eigenvalues of the operator $-(\nabla+i\eta_0)\cdot(A+\tau B)(\nabla+i\eta_0)$ are simple for $|\tau|<\tau_0$. Subsequently, we must choose $\tau$ such that $|\tau|<\sigma_0=\frac{\alpha}{2d||B||_{L^\infty}}$, in order to apply the Kato-Rellich Theorem. Now, suppose that the $(n+1)^{th}$ eigenvalue of $\mathcal{A}(\eta_0)$ has multiplicity $h$. By Kato-Rellich Theorem (Theorem~\ref{katorellich}), the $h$ eigenvalue branches of the perturbed operator $\mathcal{A}(\eta_0)+\tau\mathcal{B}(\eta_0)$ are given by the following power series at $\tau=0$, for $r=1,2,\ldots,h$:
	\begin{align*}
	\lambda_r(\tau;\eta_0)=\lambda(\eta_0)+\tau a_r(\eta_0)+\tau^2\beta_r(\tau;\eta_0).
	\end{align*}
	If there are $m,n\in\{1,2,\ldots,h\}$ such that $a_m(\eta_0)\neq a_n(\eta_0)$, then there is a $\tau_1$ such that, $\lambda_m(\tau;\eta_0)\neq\lambda_n(\tau;\eta_0)$ for $|\tau|<\tau_1$. Since two of the $h$ eigenvalue branches are distinct for small $\tau$, the multiplicity of the perturbed eigenvalue, which can only go down for small $\tau$, must be less than or equal to $h-1$. This can be achieved through an application of Proposition~\ref{find B} which gives us a matrix $B_1$ such that at least two of $(a_r(\eta_0))_{r=1}^h$ are distinct. Now, starting from the matrix $A+\tau_1 B_1$, we repeat the procedure above so that the multiplicity of the $(n+1)^{th}$ eigenvalue is further reduced. The perturbed matrix is now labelled $A+\tau_1B_1+\tau_2B_2$.  Finally, after a finite number of such steps, we can reduce the multiplicity of the $(n+1)^{th}$ eigenvalue to $1$. At the end of this procedure, we obtain a matrix of the form $A'=A+\sum_{j=1}^N\tau_jB_j$, for some $N\in\mathbb{N}$. Each perturbation must be chosen so that $\sum_{j=1}^{N}\tau_j||B_j||_{L^\infty}<\epsilon$.
\end{proof}

\begin{rem}\label{simple_in_neighborhood}
	Theorem~\ref{theorem:1} proves that an eigenvalue $\lambda(\eta_0)$ of the shifted operator $\mathcal{A}(\eta_0)$ can be made simple by a perturbation of the matrix $A\in M_B^>$. However, since the Bloch eigenvalues are Lipschitz continuous functions of the parameter $\eta\in Y^{'}$~\cite{Conca1997}, the perturbed eigenvalue $\tilde{\lambda}(\eta)$ will continue to remain simple in some neighborhood of $\eta_0$.
\end{rem}

\begin{rem}
	\leavevmode
	\begin{enumerate}
		\item The perturbation formula~\eqref{hellmanfeynman} may be thought of as a variation of the Hellmann-Feynman theorem in the physics literature. The coefficients of the differential operator~\eqref{eq1:operator} are real-valued functions, in as much as they are related to properties of materials. The presence of complex-valued coefficients in the perturbation formula complicates the choice of the real-valued perturbation $B$.
		\item In the theory of homogenization, the coefficients of the second order divergence-type periodic elliptic operator are usually only measurable and bounded. By regularity theory~\cite{Ladyzhenskaya68}, the eigenfunctions of the shifted operator $\mathcal{A}(\eta)$ are known to be H\"older continuous. However, derivatives of eigenfunctions, which may not be bounded, appear in the perturbation formula~\eqref{hellmanfeynman}. Therefore, the perturbation $B$ is chosen using the Hahn-Banach Theorem.
	\end{enumerate}
\end{rem}

\section{Global Simplicity}\label{global_simplicity}

In the previous section, we have proved that a given Bloch eigenvalue $\lambda_m(\eta)$ of the operator $\mathcal{A}$ can be made simple locally in $Y^{'}$ through a small perturbation in the coefficients. In this section, we shall perform perturbation on the operator $\mathcal{A}$ in such a way that its spectrum still retains the fibered character, i.e., $\sigma(\tilde{A})=\cup_{\eta\in Y^{'}}\sigma(\tilde{A}(\eta))$ and the $m^{th}$ eigenvalue function $\eta\mapsto\tilde{\lambda}_m(\eta)$ is simple for all $\eta\in Y^{'}$. However, the perturbed operator $\tilde{\mathcal{A}}$ may no longer be a differential operator.

\begin{proof}[Proof of Theorem~\ref{theorem:2}] 
	The operator~\eqref{eq1:operator} has a direct integral decomposition $\mathcal{A}=\int_{\eta\in\mathbb{T}^d}^{\bigoplus}\mathcal{A}(\eta)d\eta$ where $\mathcal{A}(\eta)=-(\nabla+i\eta)\cdot A(\nabla+i\eta)$ is an unbounded operator in $L^2_\sharp(Y)$. We would like to point out that $Y^{'}$ is understood to parametrize the torus, $\mathbb{T}^d$. Consider the $m^{th}$ Bloch eigenvalue $\lambda_m(\eta)$ of $\mathcal{A}$. By Lemma~\ref{lemma:22}, at any point $\eta_0\in Y^{'}$, we can find a perturbation of the coefficients $A=(a_{kl})$ of $\mathcal{A}(\eta_0)$ so that the perturbed eigenvalue $\tilde{\lambda}_m(\eta_0)$ is simple. By Remark~\ref{simple_in_neighborhood}, there is a neighborhood of $\eta_0$, $\mathcal{G}_{\eta_0}$ in which the perturbed eigenvalue $\tilde{\lambda}_m(\eta)$ of the perturbed shifted operator $\tilde{\mathcal{A}}(\eta)$ is simple. In this manner, for each $\xi\in \mathbb{T}^d$, we obtain a perturbation $B_{\xi}$ and a neighborhood, $\mathcal{G}_{\xi}$ in which the eigenvalue of the perturbed operator $\tilde{\mathcal{A}}(\eta)=-(\nabla+i\eta)\cdot (A+B_{\xi})(\nabla+i\eta)$ is simple. These sets form an open cover of the torus. By compactness of $\mathbb{T}^d$, there is a finite subcover having the property that in each member $\mathcal{G}_{\xi}$ of the subcover, the corresponding perturbation $B_{\xi}$ causes the perturbed eigenvalue $\tilde{{\lambda}}_m(\eta)$ to be simple in $\mathcal{G}_{\xi}$.
	
	Let $\{\mathcal{G}_1,\mathcal{G}_2,\ldots,\mathcal{G}_n\}$ be the finite subcover of the torus obtained above. Define $\mathcal{O}_1=\mathcal{G}_1$. For $r\geq 1$, define $\mathcal{O}_{r+1}=\displaystyle\mathcal{G}_{r+1}\setminus\bigcup_{j=1}^r\mathcal{G}_j$. Suppose that $B_j$ is the perturbation corresponding to the set $\mathcal{O}_j$.
	
	Now, define the parametrized operator 	\begin{align*}\tilde{\mathcal{A}}(\eta)=-(\nabla+i\eta)\cdot (A+\sum_{j=1}^n B_{j}\,\chi_{\mathcal{O}_j})(\nabla+i\eta)\end{align*}
	which depends measurably on $\eta\in\mathbb{T}^d$. Finally, define the direct integral $\tilde{\mathcal{A}}=\int_{\eta\in\mathbb{T}^d}^{\bigoplus}\tilde{\mathcal{A}}(\eta)d\eta$, where each of the fibers is a differential operator in $L^2_\sharp(Y)$. Then, it is known~\cite[p.284]{Reed1978} that, 
	\begin{equation*}
	\sigma(\tilde{A})=\bigcup_{\eta\in Y^{'}}\sigma(\tilde{A}(\eta)).
	\end{equation*}
	
	Hence, we may define an $m^{th}$ eigenvalue function $\eta\mapsto\tilde{\lambda}_m(\eta)$ with the property that \begin{align*}|\lambda_m(\eta)-\tilde{\lambda}_m(\eta)|\leq C\max_{1\leq j\leq n}||B_j||_{L^{\infty}},\end{align*} where $\lambda_m(\eta)$ is the $m^{th}$ Bloch eigenvalue of $\mathcal{A}$.
\end{proof}

\begin{rem}
	\leavevmode
	\begin{enumerate}
		\item Although the $m^{th}$ eigenvalue of the perturbed operator is simple for all parameter values, $\tilde{\lambda}_m(\eta)$ may only be measurable in $\eta\in Y^{'}$. However, $\tilde{\lambda}_m(\eta)$ is analytic in each $\mathcal{O}_j\subset \mathbb{T}^d$.
		\item The perturbed operator $\tilde{\mathcal{A}}$ is no longer a differential operator, even though each fiber $\tilde{\mathcal{A}}(\eta)$ is a differential operator. In fact. we shall prove in Theorem~\ref{notdiffop} that $\tilde{\mathcal{A}}$ is a differential operator if and only if $B_1=B_2=\ldots=B_n.$
		\item A rigorous account of direct integral decomposition of operators, such as the one employed above for periodic operators, may be found in~\cite{schmudgen90} and~\cite{maurin68}.%\qed
	\end{enumerate}
\end{rem}
\begin{lem}\label{notdifferential}
	Let $B$ be a symmetric matrix with $L^\infty_\sharp(Y,\mathbb{R})$-entries. Define $\mathcal{B}(\eta)=-(\nabla+i\eta)\cdot B(\nabla+i\eta)$. Let $\mathcal{O}\subset Y^{'}$ be a proper subset of $Y^{'}$. Then, the direct integral defined by $\mathcal{B}=\int^{\bigoplus}_{\eta\in\mathbb{T}^d}\mathcal{B}(\eta)\chi_{\mathcal{O}}$ is not a differential operator.
\end{lem}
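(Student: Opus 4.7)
The plan is to exploit the dichotomy that Floquet fibers of a periodic differential operator depend \emph{polynomially} on the quasi-momentum $\eta$, whereas the fibers $\chi_\mathcal{O}(\eta)\mathcal{B}(\eta)$ do not. I work by contradiction. Throughout I assume $B\not\equiv 0$ (otherwise the conclusion fails trivially) and read ``proper subset'' in the measure-theoretic sense that both $\mathcal{O}$ and $Y'\setminus\mathcal{O}$ have positive measure. Suppose that $\mathcal{B}$, viewed through the Floquet isometry $U:L^2(\mathbb{R}^d)\to L^2(Y',L^2_\sharp(Y))$, coincides with some periodic differential operator $P = \sum_{|\alpha|\leq N} c_\alpha(y)\partial^\alpha$. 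Such a $P$ automatically has its own fiber decomposition with fibers $P(\eta) = \sum_\alpha c_\alpha(y)(\nabla+i\eta)^\alpha$ that are polynomial in $\eta$, and uniqueness of the direct-integral decomposition forces
\[
P(\eta) = \chi_\mathcal{O}(\eta)\,\mathcal{B}(\eta)\qquad\text{for a.e.\ }\eta\in Y'
\]
as unbounded operators on $L^2_\sharp(Y)$.

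I test this identity against the smooth modes $e_m(y) = e^{im\cdot y}$, $m\in\mathbb{Z}^d$, which diagonalise $\nabla+i\eta$ via $(\nabla+i\eta)e_m = i(m+\eta)e_m$. A short computation, using the symmetry of $B$ and integrating the divergence by parts against $e_n$, yields
\[
p_{m,n}(\eta) := \bigl\langle e_n,\mathcal{B}(\eta)e_m\bigr\rangle_{L^2_\sharp(Y)} = (m+\eta)^{T}\widehat{B}(n-m)(m+\eta) + (n-m)^{T}\widehat{B}(n-m)(m+\eta),
\]
a polynomial of degree at most $2$ in $\eta$, where $\widehat{B}(\alpha)$ denotes the $\alpha$-th Fourier-coefficient matrix of $B$. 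Its counterpart $q_{m,n}(\eta) := \langle e_n,P(\eta)e_m\rangle$ is likewise polynomial in $\eta$, so the fiber identity specialises to $\chi_\mathcal{O}(\eta)\,p_{m,n}(\eta) = q_{m,n}(\eta)$ for a.e.\ $\eta\in Y'$, for every $m,n\in\mathbb{Z}^d$.

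On the positive-measure set $Y'\setminus\mathcal{O}$ the left-hand side vanishes, forcing the polynomial $q_{m,n}$ to vanish on a set of positive measure and hence identically; plugging back, $p_{m,n}$ must also vanish on the positive-measure set $\mathcal{O}$, so $p_{m,n}\equiv 0$ as a polynomial for every $m,n$. To reach a contradiction I need just one $(m,n)$ with $p_{m,n}\not\equiv 0$: since $B\not\equiv 0$, some $\widehat{B}(\alpha)$ is a non-zero (symmetric) matrix, and taking $m=0$, $n=\alpha$ (or $m=n=0$ if only $\widehat{B}(0)\neq 0$) the quadratic part $\eta^{T}\widehat{B}(\alpha)\eta$ of $p_{m,n}(\eta)$ is already a non-zero polynomial. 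The main technical subtlety sits in the matrix-element calculation: the first-order term of $\mathcal{B}(\eta)$ involves $\partial_k B_{kl}$, which is only a distribution since $B\in L^\infty$, but this is absorbed cleanly by realising $\mathcal{B}(\eta)$ through its quadratic form and integrating by parts against the smooth test mode $e_n$, so that only Fourier coefficients of $B$ itself appear in the final expression. Beyond this small piece of bookkeeping, the argument is just the observation that $\chi_\mathcal{O}$ cannot agree almost everywhere with a polynomial once $\mathcal{O}$ is measure-theoretically proper.
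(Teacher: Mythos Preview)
Your argument is correct and takes a genuinely different route from the paper. The paper invokes Peetre's characterisation of differential operators as support-preserving maps $\mathcal{D}\to\mathcal{D}'$ and then, via the Gelfand transform, Poisson summation, and the Schwartz--Paley--Wiener theorem, shows that $\widehat{\mathcal{B}(g)}$ is supported in the periodisation of $\mathcal{O}$, so that $\mathcal{B}(g)$ cannot be compactly supported. You instead exploit an algebraic rigidity: the Floquet fibers of any periodic differential operator have matrix elements depending \emph{polynomially} on $\eta$, and $\chi_\mathcal{O}$ times a non-trivial polynomial cannot equal a polynomial once both $\mathcal{O}$ and its complement have positive measure. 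Your approach is lighter in machinery (no Paley--Wiener, no Poisson summation) and makes the obstruction transparent; it also surfaces the non-triviality hypotheses ($B\not\equiv 0$ and $\mathcal{O}$ measure-theoretically proper) that the paper leaves implicit. One small point you gloss over: you assume at the outset that the hypothetical differential operator $P$ has \emph{periodic} coefficients. This is true---$\mathcal{B}$ commutes with lattice translations $T_p$ because on fibers $T_p$ acts as multiplication by $e^{2\pi i p\cdot\eta}$, which commutes with $\chi_\mathcal{O}(\eta)\mathcal{B}(\eta)$, and a differential operator commuting with all $T_p$ necessarily has periodic coefficients---but a sentence to this effect would close the argument.
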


\begin{proof}
	By Peetre's Theorem~\cite{peetre60},~\cite[p.~236]{duistermaatkolk2010}, a linear operator $\mathcal{B}:\mathcal{D}(\mathbb{R}^d)\to\mathcal{D}^{'}(\mathbb{R}^d)$ is a differential operator if and only if $supp(Pu)\subset supp(u)$ for all $u\in\mathcal{D}(\mathbb{R}^d)$. Here, $\mathcal{D}(\mathbb{R}^d)$ denotes the space of compactly supported smooth functions on $\mathbb{R}^d$ with the topology of test functions. Also, let $\mathcal{S}(\mathbb{R}^d)$ denote the Schwartz class of rapidly decreasing smooth functions on $\mathbb{R}^d$. In order to show that $\mathcal{B}$ is not a differential operator, we will show that it does not preserve supports. 
	
	Given $g\in\mathcal{D}(\mathbb{R}^d)$, we define its Gelfand transform as 	\begin{align*}g_\sharp(y,\eta)=\sum_{p\in\mathbb{Z}^d}g(y+2\pi p)e^{-i(y+2\pi p)\cdot\eta}.\end{align*} This is a function in $L^2(Y^{'},L^2_\sharp(Y))$. The map from $g\mapsto g_\sharp$ is an isometry on $\mathcal{D}(\mathbb{R}^d)$ in the $L^2$-inner product and hence it may be extended to a unitary isomorphism from $L^2(\mathbb{R}^d)$ to $L^2(Y^{'},L^2_\sharp(Y))$. We shall show that $\mathcal{B}(g)$ is not compactly supported. $\mathcal{B}(g)$ is a tempered distribution defined as:
	\begin{equation*}
	(\mathcal{B}(g),\phi)=\int_{\mathcal{O}}\int_Y B(\nabla+i\eta)g_\sharp(y,\eta)\cdot(\nabla-i\eta)\overline{\phi}_\sharp(y,\eta)~dyd\eta.
	\end{equation*}
	
	We may define the Fourier transform of $\mathcal{B}(g)$ in $\mathcal{S}^{'}(\mathbb{R}^d)$ as \begin{align*}(\widehat{\mathcal{B}(g)},\phi)=(\mathcal{B}(g),\mathcal{F}^{-1}(\phi)),\end{align*}
	where $\mathcal{F}^{-1}(\phi)=\frac{1}{(2\pi)^{d/2}}\int_{\mathbb{R}^d}\phi(\eta)e^{iy\cdot\eta}~d\eta$ is the inverse Fourier transform of $\phi$. Since $\phi\in\mathcal{S}(\mathbb{R}^d)$, there exists a $\psi\in\mathcal{S}(\mathbb{R}^d)$ such that $\phi=\widehat{\psi}$. Therefore, 	\begin{align*}(\widehat{\mathcal{B}(g)},\phi)=(\mathcal{B}(g),\mathcal{F}^{-1}(\phi))=(\mathcal{B}(g),\psi).\end{align*}
	By Poisson Summation Formula~\cite[p.~171]{grafakosclassical2008}, we conclude that
	\begin{align}\label{poissonsum}
		\psi_\sharp(y,\eta)&=\sum_{p\in\mathbb{Z}^d}\psi(y+2\pi p)e^{-i(y+2\pi p)\cdot\eta}=\frac{1}{(2\pi)^{d/2}}\sum_{q\in\mathbb{Z}^d}\widehat{\psi}(\eta+q)e^{iq\cdot y}\notag\\&\quad=\frac{1}{(2\pi)^{d/2}}\sum_{q\in\mathbb{Z}^d}{\phi}(\eta+q)e^{iq\cdot y}.
	\end{align}
	
	Now, suppose that $\phi\in\mathcal{S}(\mathbb{R}^d)$ vanishes on $\bigcup_{q\in\mathbb{Z}^d}(\mathcal{O}+q)$, then $\psi_\sharp$, as obtained in~\eqref{poissonsum}, vanishes on $\mathcal{O}$. Hence, 
	\begin{align*}
		(\widehat{\mathcal{B}(g)},\phi)&=(\mathcal{B}(g),\psi)=\int_Y\int_{\mathcal{O}}B(\nabla+i\eta)g_\sharp(y,\eta)\cdot(\nabla-i\eta)\overline{\psi}_\sharp(y,\eta)~d\eta\,dy=0.
	\end{align*}
	Therefore, $\widehat{\mathcal{B}(g)}$ vanishes on the open set $\bigcup_{q\in\mathbb{Z}^d}(\mathcal{O}+q)$. By Schwartz-Paley-Wiener Theorem~\cite[p.~191]{rudinfunctional1991}, $\widehat{\mathcal{B}(g)}$ cannot be the Fourier transform of a compactly supported distribution, i.e., $\mathcal{B}(g)$ is not compactly supported. 
\end{proof}

\begin{thm}
	Let $\{\mathcal{O}_1,\mathcal{O}_2,\ldots,\mathcal{O}_n\}$ be a partition of $Y^{'}$ up to a set of measure zero, i.e., $Y^{'}\setminus\bigcup_{j=1}^n\mathcal{O}_j$ is a set of measure zero. Define $\mathcal{B}:\mathcal{D}(\mathbb{R}^d)\to\mathcal{D}^{'}(\mathbb{R}^d)$ by $\mathcal{B}(g)=\sum_{j=1}^n\int^{\bigoplus}_{\eta\in\mathcal{O}_j}\mathcal{B}_j(\eta)g_\sharp(y,\eta)$ where $\mathcal{B}_j(\eta)=-(\nabla+i\eta)\cdot B_j(\nabla+i\eta)$ where for all $j\in\{1,2,\ldots,n\}$, $B_j$ are matrices with $L^\infty_\sharp(Y,\mathbb{R})$-entries, then $\mathcal{B}$ is a differential operator if and only if $B_1=B_2=\ldots=B_n$.\label{notdiffop}
\end{thm}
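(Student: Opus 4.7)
The easy direction is immediate: if $B_{1}=B_{2}=\cdots=B_{n}=B$, then by the partition property we have $\mathcal{B}(g)=\int^{\bigoplus}_{Y^{'}}\mathcal{B}_{1}(\eta)\,g_{\sharp}(\cdot,\eta)\,d\eta=-\nabla\cdot(B\nabla g)$, which is a second order differential operator with periodic coefficients. For the converse, the plan is to exploit the fact that any translation-invariant differential operator with periodic coefficients has fibers that are polynomials in $\eta$, and that the piecewise expressions $\mathcal{B}_{j}(\eta)$ prescribed on the sets $\mathcal{O}_{j}$ are only compatible with a single global polynomial in $\eta$ when the matrices $B_{j}$ all agree.

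The first step is to observe that $\mathcal{B}$ commutes with the translations $T_{2\pi p}$ for $p\in\mathbb{Z}^{d}$: the Gelfand transform satisfies $(T_{2\pi p}g)_{\sharp}(y,\eta)=e^{-2\pi i\,p\cdot\eta}g_{\sharp}(y,\eta)$, and since each fiber $\mathcal{B}_{j}(\eta)$ acts only in the $y$-variable, it commutes with multiplication by $e^{-2\pi i\,p\cdot\eta}$, yielding $\mathcal{B}\,T_{2\pi p}=T_{2\pi p}\,\mathcal{B}$. Assuming $\mathcal{B}$ is a differential operator, Peetre's theorem (already invoked in the proof of Lemma~\ref{notdifferential}) provides a local representation $\sum_{\alpha}c_{\alpha}(y)D^{\alpha}$ of finite order. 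Translation invariance under the lattice $2\pi\mathbb{Z}^{d}$ forces the coefficients $c_{\alpha}$ to be $2\pi\mathbb{Z}^{d}$-periodic, and local finiteness of the order combined with the compactness of a fundamental domain promotes the order to a global bound $N$. Hence $\mathcal{B}=\sum_{|\alpha|\le N}c_{\alpha}(y)D^{\alpha}$ with periodic coefficients.

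Any such periodic differential operator fibers as $\mathcal{B}(\eta)=e^{-i\eta\cdot y}\mathcal{B}\,e^{i\eta\cdot y}=\sum_{|\alpha|\le N}c_{\alpha}(y)(D+\eta)^{\alpha}$, which is a polynomial in $\eta$ of degree at most $N$ whose coefficients are differential operators in $y$. On the other hand, by the construction in the theorem, $\mathcal{B}(\eta)=\mathcal{B}_{j}(\eta)=-(\nabla+i\eta)\cdot B_{j}(\nabla+i\eta)$ for $\eta\in\mathcal{O}_{j}$, itself a polynomial of degree $2$ in $\eta$. Two polynomials on $\mathbb{R}^{d}$ that agree on a set of positive Lebesgue measure must be identically equal, because the zero set of a nontrivial polynomial on $\mathbb{R}^{d}$ has measure zero. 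Consequently, for each $j$ with $|\mathcal{O}_{j}|>0$ the quadratic polynomial $\mathcal{B}_{j}(\eta)$ must coincide with the single global polynomial $\sum_{|\alpha|\le N}c_{\alpha}(y)(D+\eta)^{\alpha}$; comparing the coefficients of $\eta_{p}\eta_{q}$ in two such identifications yields $(B_{j})_{pq}=(B_{k})_{pq}$ for all indices $j,k,p,q$, so that all the matrices $B_{j}$ coincide.

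The main obstacle is the rigorous passage from Peetre's local representation, with a priori distributional coefficients, to a globally defined periodic operator of bounded order; here periodicity is essential, since it promotes the locally finite order to a global bound via compactness of $\mathbb{T}^{d}$. An alternative strategy closer to the spirit of Lemma~\ref{notdifferential} is to subtract from $\mathcal{B}$ the genuine differential operator $-\nabla\cdot B_{1}\nabla$, producing a fibered operator $\sum_{j\ge 2}\int^{\bigoplus}_{\mathcal{O}_{j}}-(\nabla+i\eta)(B_{j}-B_{1})(\nabla+i\eta)$ that vanishes on $\mathcal{O}_{1}$ but is nonzero on some $\mathcal{O}_{j}$ with $j\ge 2$ whenever the $B_{j}$ are not all equal; one could then extend the Paley-Wiener-Schwartz reasoning used in Lemma~\ref{notdifferential} to show that this residual object fails the support-preservation property required of a differential operator.
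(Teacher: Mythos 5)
Your proposal is essentially correct but your primary argument follows a genuinely different route from the paper's. The paper proves the converse by subtracting the honest differential operator $\int^{\bigoplus}_{Y^{'}}\mathcal{B}_1(\eta)\,d\eta=-\nabla\cdot B_1\nabla$ and then running the Poisson-summation/Schwartz--Paley--Wiener argument of Lemma~\ref{notdifferential} on the residual operator $\mathcal{C}=\sum_{j\ge 2}\int^{\bigoplus}_{\mathcal{O}_j}(\mathcal{B}_j-\mathcal{B}_1)(\eta)\,d\eta$: one exhibits a compactly supported $g$ with $\widehat{\mathcal{C}(g)}$ vanishing on an open set, so $\mathcal{C}(g)$ is not compactly supported and support preservation fails. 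This is exactly the ``alternative strategy'' you sketch in your last paragraph, so you have in effect reproduced the paper's proof as a fallback. Your primary route --- lattice translation invariance, a global Peetre representation $\sum_{|\alpha|\le N}c_\alpha(y)D^\alpha$ with periodic coefficients, identification of the Floquet fibers as polynomials in $\eta$, and the fact that two polynomials agreeing on a set of positive measure coincide --- is more structural: it not only derives a contradiction but pins down the operator completely. What it costs is precisely what you identify as ``the main obstacle'': Peetre's theorem for maps $\mathcal{D}(\mathbb{R}^d)\to\mathcal{D}'(\mathbb{R}^d)$ yields only a local representation with distributional coefficients, of locally finite order, possibly failing on a nowhere dense exceptional set; promoting this to a single global expansion whose fibers can be compared coefficient-by-coefficient with $-(\nabla+i\eta)\cdot B_j(\nabla+i\eta)$ needs to be carried out (periodicity of the $c_\alpha$ from uniqueness of the local representation, a uniform order bound from compactness of a fundamental domain, and the a.e.\ uniqueness of fibers of a decomposable operator). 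The paper's route avoids all of this because it only uses the support-preservation characterization, never the coefficient representation.

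Two smaller points. First, your polynomial argument needs every $\mathcal{O}_j$ to have positive Lebesgue measure; otherwise ``agree on a set of positive measure'' is vacuous for that $j$ --- and indeed the theorem as stated is false if some $\mathcal{O}_j$ is null, since altering $B_j$ on a null fiber set does not change $\mathcal{B}$. This hypothesis is implicit in the paper's proof as well (which moreover needs the relevant sets to have the topological properties required by the Paley--Wiener step), so it is not a defect specific to your argument, but you should state it. Second, when you write $\mathcal{B}(\eta)=\mathcal{B}_j(\eta)$ for $\eta\in\mathcal{O}_j$, this identification of fibers only holds for almost every $\eta$, which is all you need but should be said.
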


\begin{proof}
	If $B\coloneqq B_1=B_2=\ldots=B_n$, then $\mathcal{B}(g)=-\nabla\cdot B\nabla(g)$ which is a differential operator.
	
	Conversely, without loss of generality, assume that $B_1\neq B_2$ and suppose that $\mathcal{B}$ is a differential operator. Then,
	\begin{align*}
		\mathcal{B}(g)=&\int^{\bigoplus}_{\eta\in Y^{'}}\mathcal{B}_1(\eta)d\eta+\int^{\bigoplus}_{\eta\in\mathcal{O}_2}(\mathcal{B}_2-\mathcal{B}_1)(\eta)d\eta+\int^{\bigoplus}_{\eta\in\mathcal{O}_3}(\mathcal{B}_3-\mathcal{B}_1)(\eta)d\eta+\ldots\\&\qquad
		+\int^{\bigoplus}_{\eta\in\mathcal{O}_n}(\mathcal{B}_n-\mathcal{B}_1)(\eta)d\eta.
	\end{align*}
	Hence,
	\begin{align*}
		\mathcal{B}(g)-\int^{\bigoplus}_{\eta\in Y^{'}}\mathcal{B}_1(\eta)d\eta=\sum_{j=2}^{n}\int^{\bigoplus}_{\eta\in\mathcal{O}_j}(\mathcal{B}_j-\mathcal{B}_1)(\eta)d\eta
	\end{align*}
	The left hand side of the above equation is a differential operator. We will show that the right hand side is not a differential operator to obtain a contradiction.
	
	We proceed as in Lemma~\ref{notdifferential}.
	
	Define $\mathcal{C}:\mathcal{D}(\mathbb{R}^d)\to\mathcal{D}^{'}(\mathbb{R}^d)$ by 
	\begin{equation*}
	(\mathcal{C}(g),\phi)=\sum_{j=2}^{n}\int_{\mathcal{O}_j}\int_Y (B_j-B_1)(\nabla+i\eta)g_\sharp(y,\eta)\cdot(\nabla-i\eta)\overline{\phi}_\sharp(y,\eta)~dy\,d\eta
	\end{equation*}
	
	It is easy to see that $\mathcal{C}(g)\in\mathcal{S}^{'}(\mathbb{R}^d)$. 
	
	Therefore, we may define its Fourier transform by
		\begin{align*}(\widehat{\mathcal{C}(g)},\phi)=(\mathcal{C}(g),\mathcal{F}^{-1}(\phi)),\end{align*}
	where $\mathcal{F}^{-1}(\phi)=\frac{1}{(2\pi)^{d/2}}\int_{\mathbb{R}^d}\phi(\eta)e^{iy\cdot\eta}~d\eta$ is the inverse Fourier transform of $\phi$. Since $\phi\in\mathcal{S}(\mathbb{R}^d)$, there exists $\psi\in\mathcal{S}(\mathbb{R}^d)$ such that $\phi=\widehat{\psi}$. Therefore, 	\begin{align*}(\widehat{\mathcal{C}(g)},\phi)=(\mathcal{C}(g),\mathcal{F}^{-1}(\phi))=(\mathcal{C}(g),\psi).\end{align*}
	By Poisson Summation Formula~\cite[p.~171]{grafakosclassical2008}, we conclude that
	\begin{align}\label{poissonsum2}
		\psi_\sharp(y,\eta)&=\sum_{p\in\mathbb{Z}^d}\psi(y+2\pi p)e^{-i(y+2\pi p)\cdot\eta}= \frac{1}{(2\pi)^{d/2}}\sum_{q\in\mathbb{Z}^d}\widehat{\psi}(\eta+q)e^{iq\cdot y}\notag\\
		&=\frac{1}{(2\pi)^{d/2}}\sum_{q\in\mathbb{Z}^d}{\phi}(\eta+q)e^{iq\cdot y}.
	\end{align}
	
	Now, suppose that $\phi\in\mathcal{S}(\mathbb{R}^d)$ vanishes on $\bigcup_{q\in\mathbb{Z}^d}(\bigcup_{j=2}^n\mathcal{O}_j+q)$, then $\psi_\sharp$, as obtained in~\eqref{poissonsum2}, vanishes on $\bigcup_{j=2}^n\mathcal{O}_j$. Hence, 
	\begin{align*}
		(\widehat{\mathcal{C}(g)},\phi)&=(\mathcal{C}(g),\psi)\\
		&=\sum_{j=2}^{n}\int_Y\int_{\mathcal{O}_j}(B_j-B_1)(\nabla+i\eta)g_\sharp(y,\eta)\cdot(\nabla-i\eta)\overline{\psi}_\sharp(y,\eta)~d\eta\,dy=0.
	\end{align*}
	Therefore, $\widehat{\mathcal{C}(g)}$ vanishes on the open set $\bigcup_{q\in\mathbb{Z}^d}(\bigcup_{j=2}^n\mathcal{O}_j+q)$. By Schwartz-Paley-Wiener Theorem~\cite[p.~191]{rudinfunctional1991}, $\widehat{\mathcal{C}(g)}$ cannot be the Fourier transform of a compactly supported distribution, i.e., $\mathcal{C}(g)$ is not compactly supported. Therefore, $\mathcal{C}$ is not a differential operator.
\end{proof}

\section{Proof of Theorem~\ref{theorem:3}}\label{simplicity_edge_1}
In this section, we prove that a spectral edge of a periodic elliptic differential operator can be made simple through a perturbation in the coefficients. The proof essentially follows Klopp and Ralston~\cite{KloppRalston00}, with the straightforward modification that the coefficients must come from $W^{1,\infty}_\sharp(Y,\mathbb{R})$. This condition is required to ensure that the eigenfunctions and their derivatives are H\"older continuous functions. We produce the proof here for completeness.

Suppose that the coefficients of the operator~\eqref{eq1:operator}, $a_{kl}\in W^{1,\infty}_\sharp(Y)$. Note that the Bloch eigenvalues which are defined for $\eta\in Y^{'}$ are Lipschitz continuous in $\eta$ and may be extended as periodic functions to $\mathbb{R}^d$. In the sequel, we shall treat the Bloch eigenvalues as functions on $\mathbb{T}^d$, which is identified with $Y^{'}$ in a standard way. Also, we shall write $\lambda_j(\eta,A)$ to specify that a Bloch eigenvalue corresponds to a particular matrix $A$, appearing in the operator $\mathcal{A}$. We shall prove the theorem for an upper endpoint of a spectral gap. The proof for a lower endpoint is identical. We shall require the following lemma.

\begin{lem}
	Consider the operator $\mathcal{A}$ as in~\eqref{eq1:operator}, with $A\in M_B^>$. Let $\lambda_0$ correspond to the upper edge of a spectral gap of $\mathcal{A}$ and let $m$ be the smallest index such that the Bloch eigenvalue $\lambda_{m}$ attains $\lambda_0$, then
	\begin{enumerate}[label={(L\arabic*)}]
		\item There exist numbers $a,b\in\mathbb{R}$ such that $\lambda_{m-1}(\eta)<a<\lambda_0<\lambda_m(\eta)<b$ for all $\eta\in Y^{'}$. Further, there exists $M\in\mathbb{N}$ such that $M>m$ and the Bloch eigenvalue $\lambda_M$ satisfies $\lambda_M(\eta)>b$ for all $\eta\in Y^{'}$.\label{lemma.1}
		\item Let $B$ be a symmetric matrix with $L^\infty_\sharp(Y,\mathbb{R})$-entries. There is a finite open cover of $Y^{'}$, $\{\mathcal{G}_1,\mathcal{G}_2,\ldots,\mathcal{G}_n\}$ such that for each $\mathcal{G}_j$, we have an orthonormal set in $L^2_\sharp(Y)$ of functions analytic for $\eta\in\mathcal{G}_j$ and for sufficiently small $t$,
		\begin{equation}\label{orthonormal set}
		\{ \phi_m^{(j)}(\eta,A-tB), \phi_{m+1}^{(j)}(\eta,A-tB),\ldots, \phi_{R_j}^{(j)}(\eta,A-tB) \}.
		\end{equation}
	    Further, for each fixed $t$, the linear subspace generated by the functions in~\eqref{orthonormal set} contains the eigenspaces corresponding to eigenvalues of $-\nabla\cdot(A-tB)\nabla$ between $a$ and $b$.\label{lemma.2}
		\item The functions in~\eqref{orthonormal set} may be chosen such that the following equation is satisfied
		\begin{equation}\left\langle\frac{d{\phi}^{(j)}_r}{dt},\phi^{(j)}_s\right\rangle=0,\label{orthogonality1}\end{equation}
		where $\langle\cdot,\cdot\rangle$ denotes the $L^2_\sharp(Y)$ inner product.
		\label{lemma.5}
	\end{enumerate}
\end{lem}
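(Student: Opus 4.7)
The plan is to prove the three assertions in sequence, drawing on standard Bloch theory for \ref{lemma.1}, Kato-type perturbation theory via Riesz projections for \ref{lemma.2}, and an elementary ODE argument in the finite-dimensional auxiliary eigenspace for \ref{lemma.5}.

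For \ref{lemma.1}, I would argue that if $\lambda_0$ is the upper edge of a spectral gap and $m$ is the smallest index with $\lambda_m$ attaining $\lambda_0$, then $\sigma^+_{m-1}=\max_{\eta}\lambda_{m-1}(\eta)<\lambda_0$, since otherwise $\lambda_0$ would lie in the interior of the $(m{-}1)$-st spectral band and could not be a gap edge. Continuity of $\lambda_{m-1}$ and $\lambda_m$ on the compact parameter space $Y'$ then produces $a\in(\sigma^+_{m-1},\lambda_0)$ and $b>\max_{\eta}\lambda_m(\eta)$. The existence of $M$ with $\min_{\eta}\lambda_M(\eta)>b$ follows from the uniform divergence $\lambda_n(\eta)\to\infty$: by the min-max principle and the coercivity of $A$, $\lambda_n(\eta)\ge \alpha\,c_n(\eta)$ where $c_n(\eta)$ is the $n$-th eigenvalue of the shifted Laplacian, and these diverge uniformly by Weyl asymptotics.

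For \ref{lemma.2}, I would pass to a jointly holomorphic family. The sesquilinear form $(u,v)\mapsto \int_Y (A-tB)(\nabla+i\eta)u\cdot\overline{(\nabla+i\eta)v}\,dy$ extends to $(\eta,t)\in\mathbb{C}^d\times\mathbb{C}$, and an elementary estimate exploiting the coercivity of $A$ shows that it remains bounded and sectorial with a uniform coercivity constant in a complex neighborhood of $(\eta_0,0)$; hence $\mathcal{A}(\eta)-t\mathcal{B}(\eta)$ is a holomorphic family of type (B). Choose a Jordan contour $\gamma$ in the resolvent set of $\mathcal{A}(\eta_0)$ enclosing exactly the eigenvalues in $[a,b]$. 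By resolvent continuity, $\gamma$ remains in the resolvent set of $\mathcal{A}(\eta)-t\mathcal{B}(\eta)$ for $(\eta,t)$ in a complex neighborhood of $(\eta_0,0)$, so the Riesz projection $P(\eta,t)=\tfrac{1}{2\pi i}\oint_\gamma (z-\mathcal{A}(\eta)+t\mathcal{B}(\eta))^{-1}\,dz$ is jointly analytic with locally constant finite rank. Applying $P(\eta,t)$ to a fixed orthonormal basis of $\mathrm{Range}\,P(\eta_0,0)$ yields an analytic basis, and an analytic Gram-Schmidt procedure (the Gram matrix is analytic and positive definite at $(\eta_0,0)$) produces the required orthonormal analytic frame on a real neighborhood $\mathcal{G}_{\eta_0}$. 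Shrinking $\mathcal{G}_{\eta_0}$ so that no eigenvalue crosses $a$ or $b$ ensures the span captures all eigenspaces in $[a,b]$. Compactness of $Y'$ delivers the finite subcover $\{\mathcal{G}_j\}_{j=1}^n$ with the corresponding ranges giving the indices $m,\ldots,R_j$.

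For \ref{lemma.5}, I would work patchwise and drop the index $j$. Given the orthonormal analytic family $\{\phi_r(\eta,t)\}_{r=m}^{R}$ from \ref{lemma.2}, define $M_{rs}(\eta,t)=\langle d\phi_r/dt,\phi_s\rangle$; differentiating $\langle\phi_r,\phi_s\rangle=\delta_{rs}$ in $t$ shows $M+M^*=0$, so $M$ is skew-Hermitian and analytic. I would then solve the linear matrix ODE $dU/dt=-UM$ with $U(\eta,0)=I$: skew-Hermicity gives $d(UU^*)/dt=0$, so $U(\eta,t)$ is unitary and analytic. The transformed frame $\tilde\phi_r=\sum_s U_{rs}\phi_s$ is again orthonormal and analytic, and a one-line computation verifies $\langle d\tilde\phi_r/dt,\tilde\phi_s\rangle=0$. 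The main obstacle is \ref{lemma.2}: the Kato-Rellich theorem quoted in the excerpt handles only one real parameter, whereas here one needs joint analyticity in the $d{+}1$ parameters $(\eta,t)$, so one cannot invoke Rellich's theorem directly and must instead set up the multiparameter holomorphic framework and verify that the analytic Gram-Schmidt step preserves holomorphy—both routine once the complexified family has been established.
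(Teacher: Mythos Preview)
Your proposal is correct and follows essentially the same route as the paper: continuity/compactness plus Weyl-type growth for \ref{lemma.1}, Riesz projections for the holomorphic type-(B) family followed by orthonormalization and a compactness cover for \ref{lemma.2}, and the unitary-gauge ODE $dU/dt=-UM$ with $M_{rs}=\langle d\phi_r/dt,\phi_s\rangle$ skew-Hermitian for \ref{lemma.5}. Your observation that one needs joint analyticity in $(\eta,t)$ rather than the one-parameter Kato--Rellich theorem is apt; the paper handles this only by appealing separately to the type-(B) structure in $\eta$ and in $t$, whereas you set up the multiparameter complexification explicitly.
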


\begin{proof}
	\leavevmode
	
	\textbf{Proof of~\ref{lemma.1}} As noted in Remark~\ref{simple_in_neighborhood}, the Bloch eigenvalues are Lipschitz continuous functions on a compact set $\mathbb{T}^d$. Hence, the function $\eta\mapsto\lambda_m(\eta)$ is bounded above, say by $b$. Since, $\lambda_0$ is a spectral edge, $\displaystyle\delta\coloneqq\min_{\eta\in Y^{'}}\lambda_m(\eta)-\max_{\eta\in Y^{'}}\lambda_{m-1}(\eta)$ is positive. Choose $a=\lambda_0-\frac{\delta}{2}$. These choices of $a$ and $b$ satisfy our requirements.

	By Weyl's law~\cite{Reed1978}, the eigenvalues of the periodic Laplacian on $Y$ satisfy the following inequality, for some $s>0$ and $C_1>0$, for large $M$,
	\begin{align}\label{BlochBoundedBelow1}
	\lambda_M(0,I)\geq\lambda_M^{N}\geq C_1 M^s,
	\end{align}
	where $\lambda_M^{N}$ denotes the $M^{th}$ eigenvalue of the Neumann Laplacian on $Y$. 
	
	By Lipschitz continuity of Bloch eigenvalues in the dual parameter, we have 	\begin{align*}|\lambda_M(\eta,I)-\lambda_M(0,I)|\leq C|\eta|\leq C_2.\end{align*} Therefore, for all $\eta\in Y^{'}$, \begin{equation}\label{BlochBoundedBelow2}\lambda_M(\eta,I)\geq \lambda_M(0,I)-C_2.\end{equation} On combining~\eqref{BlochBoundedBelow1} and~\eqref{BlochBoundedBelow2}, for all $\eta\in Y^{'}$, we obtain 	\begin{align*}\lambda_M(\eta,I)\geq C_1M^s-C_2.\end{align*}
	
	It follows from a standard argument involving min-max principle, that $\lambda_M(\eta,I)\leq C_3||A^{-1}||_{L^\infty}\lambda_M(\eta,A)$.
	
	Therefore, for all $\eta\in Y^{'}$, 
	\begin{align*}
	\lambda_M(\eta,A)&\geq \dfrac{1}{C_3||A^{-1}||_{L^\infty}}\lambda_M(\eta,I)\notag\\
	&\quad\geq \dfrac{C_1M^s}{C_3||A^{-1}||_{L^\infty}}-\dfrac{C_2}{C_3||A^{-1}||_{L^\infty}}.
	\end{align*}
	Finally to prove~\ref{lemma.2}, choose $M$ large enough so that 	\begin{align*}\dfrac{C_1M^s}{C_3||A^{-1}||_{L^\infty}}-\dfrac{C_2}{C_3||A^{-1}||_{L^\infty}}>b.\end{align*}
	
	\textbf{Proof of~\ref{lemma.2}}
	For each $\xi\in\mathbb{T}^d$, there is a circle $\Gamma_\xi$ in the complex plane containing the eigenvalues of $\mathcal{A}(\xi)$ between $a$ and $b$. Let $B$ be a $d\times d$ real symmetric matrix with $W^{1,\infty}_\sharp(Y)$ entries. Observe that the operator $P_\xi$ defined by
	\begin{equation}\label{ProjectionOperator}
	P_\xi(\eta;A-tB)\coloneqq-\frac{1}{2\pi i}\int_{\Gamma_\xi} \left(\mathcal{A}(\eta;A-tB)-zI\right)^{-1}~dz
	\end{equation}
	is real-analytic in a neighborhood $R_\xi$ of $\xi$ and for small $t$, where 	\begin{align*}\mathcal{A}(\eta;A-tB)\coloneqq -(\nabla+i\eta)\cdot(A-tB)(\nabla+i\eta).\end{align*} The operator $P_\xi$ is an orthogonal projection onto the eigenspace of $\mathcal{A}(\eta;A-tB)$ corresponding to the eigenvalues between $a$ and $b$. The analyticity of the projection operator follows from the analyticity of the integrand, which is a consequence of the operator family $\mathcal{A}(\eta;A-tB)$ being a holomorphic family of type $(B)$. A proof of this fact is available in~\cite{Sivaji2004} for perturbation in $\eta$. For a perturbation in $t$, a proof is given in Appendix~\ref{PerturbationTheory}.
	
	Therefore, in a neighborhood of $\eta=\xi, t=0$, we obtain an orthonormal basis for the range of $P_{\xi}(\eta,A-tB)$.  In this manner, we obtain an open cover of $\mathbb{T}^d$. By compactness of $\mathbb{T}^d$, the open cover has a finite subcover $\{\mathcal{G}_1,\mathcal{G}_2,\ldots,\mathcal{G}_n\}$ with the following properties.
	\begin{enumerate}
		\item For each $\mathcal{G}_j$, we have an orthonormal set in $L^2_\sharp(Y)$	\begin{align*}\{\phi^{(j)}_{m}(\eta,A-tB),\ldots,\phi^{(j)}_{R_j}(\eta,A-tB)\}\end{align*} whose elements are analytic for $\eta\in\mathcal{G}_j$ and $|t|<\delta$.
		\item The linear subspace generated by 	\begin{align*}\{\phi^{(j)}_{m}(\eta,A-tB),\ldots,\phi^{(j)}_{R_j}(\eta,A-tB)\}\end{align*} contains the eigenspaces corresponding to eigenvalues of $\mathcal{A}(\eta;A-tB)$ that lie between $a$ and $b$.
	\end{enumerate}
	\textbf{Proof of~\ref{lemma.5}}  Let $\tilde\phi_r=\sum_{p=m} u_{rp}\phi_p$, then
	
	\begin{align*}
	\left\langle\frac{d{\tilde{\phi_r}}}{dt},\tilde\phi_s\right\rangle=\sum_p\frac{d{u}_{rp}}{dt}\overline{u_{sp}}+\sum_{p,q}u_{rp}\overline{u_{sq}}\left\langle\frac{d{\phi}_p}{dt},\phi_q\right\rangle.
	\end{align*}
	
	If we set $U$ to be the matrix with entries $u_{rs}$ and $A$ to be the matrix with entries $-\left\langle\phi_r,\frac{d{\phi}_s}{dt}\right\rangle$, \eqref{orthogonality1} will hold if
	\begin{align*}
	\frac{dU}{dt}=UA.
	\end{align*}
	This is solved with the initial condition $U(0)=I$. The matrix $A$ is skew-symmetric, therefore, $U(t)$ is unitary and analytic for $\eta\in\mathcal{G}$. Replace $\phi_r$ with $\tilde{\phi}_r$ to complete the proof of~\ref{lemma.5}.
\end{proof}

\begin{proof}[Proof of Theorem~\ref{theorem:3}]
	\leavevmode
	%\begin{enumerate}
	
	Consider the sesquilinear form 	\begin{align*}a(\eta,t)(u,v)\coloneqq\int_Y(A-tB)(\nabla+i\eta)u\cdot(\nabla-i\eta)\overline{v}.\end{align*} For the functions constructed in~\ref{lemma.2}, $\langle\frac{d\phi^{(j)}_r}{dt},\phi^{(j)}_s\rangle=0$ for all $r,s$. Thus,
	\begin{align*}
		\frac{d}{dt}\left(a(\eta,t)(\phi_r^{(j)}(\eta,t),\phi_s^{(j)}(\eta,t))\right)=-\int_Y B(\nabla+i\eta)\phi_r^{(j)}(\eta,t)\cdot(\nabla-i\eta)\overline{\phi_s^{(j)}(\eta,t)}.
	\end{align*}
	
	%\item \textbf{Continuity of Eigenfunctions and their derivatives} 
	A function $f$ defined on $\mathbb{R}^d$ is said to be $(\eta,Y)$-periodic if for all $p\in\mathbb{Z}^d$, $y\in\mathbb{R}^d$, $u(y+2\pi p)=e^{2\pi i p\cdot \eta}u(y)$. The eigenfunctions of $\mathcal{A}(\eta,A)$ with periodic boundary conditions, when multiplied by $\exp(-i\eta\cdot y)$, become eigenfunctions of $\mathcal{A}\coloneqq-\nabla\cdot A\nabla$ with $(\eta,Y)$-periodic boundary conditions, i.e., there are $\lambda$ and $u$ such that $-\nabla\cdot(A\nabla)u=\lambda u$, where $u$ is $(\eta,Y)$-periodic. Since $u$ is a complex-valued function, the regularity theorem~\cite[Chapter 3, Section 15]{Ladyzhenskaya68}, cannot be applied directly. However, since the operator is linear, we may write $u=v+iw$ and express the eigenvalue equation for $u$ as two equations for the real-valued functions $v$ and $w$. In particular, $v$ and $w$ satisfy $-\nabla\cdot(A\nabla)v=\lambda v$ and $-\nabla\cdot(A\nabla)w=\lambda w$ in the interior of $Y$. Hence, by the regularity theory for elliptic equations with $W^{1,\infty}$ coefficients, $v$ and $w$ and their first-order derivatives are H\"older continuous in the interior of $Y$. Further, the H\"older estimates in the interior of $Y$ are independent of $\eta\in Y^{'}$. Consequently, $u$ and its derivatives are H\"older continuous in the interior of $Y$.
	
	%\item \textbf{Estimates} 
	Choose $\widehat{\eta}$ and $\phi_0$ such that $\mathcal{A}(\widehat{\eta},A)\phi_0=\lambda_0\phi_0.$ Choose $\phi_0\neq \exp(-i\hat{\eta}\cdot y)$. This can be achieved because the multiplicity of the Bloch eigenvalue at $\hat{\eta}$ is greater than one. Therefore, $(\nabla+i\widehat{\eta})\phi_0$ is non-zero. Consequently, there exist a $y_0$ in the interior of $Y$, an $l$ with $1\leq l\leq d$, and a $\theta>0$ such that $\left|\left(\frac{\partial}{\partial x_l}+i\widehat{\eta}_l\right)\phi_0(y_0,\widehat{\eta})\right|^2\geq\theta$. Since $\phi_0$ and its derivatives are H\"older continuous in the interior of $Y$, there is a small $\epsilon_0>0$ such that, \begin{align}\label{continuityofderivatives1}\mbox{for }|y-y_0|<\epsilon_0,\qquad\left|\left(\frac{\partial}{\partial x_l}+i{\widehat{\eta}_l}\right)\phi_0(y,\widehat{\eta})\right|^2>\frac{2\theta}{3}.\end{align}
	
	Additionaly, since $\phi_r^{(j)}$ obtained earlier in~\ref{lemma.2} are linear combinations of eigenfunctions, by the H\"older continuity of the eigenfunctions and their derivatives, an $\epsilon_0$ may be chosen so that
	\begin{align}\label{continuityofderivatives2}
	\sum_{p=m}^{R_j} \left|\left(\frac{\partial}{\partial x_l}+i{\eta}_l\right)\phi_p^{(j)}(y,\eta,A)-\left(\frac{\partial}{\partial x_l}+i{\eta}_l\right)\phi_p^{(j)}(y_0,\eta,A)\right|^2<\frac{\theta}{3},
	\end{align}
	for $\eta\in\mathcal{G}_j$ and $|y-y_0|<\epsilon_0$. Define the matrix $B=diag(0,\ldots,0,b_l,0,\ldots,0)$ all of whose diagonal entries are zero other than $b_{l}$ which is chosen as a function $b_{l}\in C_0^{\infty}(|y-y_0|<\epsilon_0)$ such that $b_{l}\geq 0$ and $\int_Y b_{l}=1$. Extend $B$ periodically to $\mathbb{R}^d$.
	
	%\item 
	There is an index $q$ such that $\widehat{\eta}\in\mathcal{G}_q$. Therefore, $\displaystyle\phi_0(y,\widehat{\eta})=\sum_{r=m}^{R_q} c_r\phi_r^{(q)}(y,\widehat{\eta},A)$. 
	
	Define $\phi_0(y,\widehat{\eta},t)=\displaystyle\sum_{r=m}^{R_q} c_r\phi_r^{(q)}(y,\widehat{\eta},A-tB)$. Then, by~\eqref{continuityofderivatives1},
	\begin{align*}
		\frac{d}{dt}\left(a(\widehat{\eta},t)(\phi_0(\cdot,\widehat{\eta},t),\phi_0(\cdot,\widehat{\eta},t))\right)|_{t=0}&=-\int_Y b_l\left(\frac{\partial}{\partial y_l}+i\widehat{\eta_l}\right)\phi_0(y,\widehat{\eta})\left(\frac{\partial}{\partial y_l}-i\widehat{\eta_l}\right)\overline{\phi_0(y,\widehat{\eta})}~dy\notag\\
		&\qquad\leq\frac{-2\theta}{3}.
	\end{align*}
	
	Hence, 
	\begin{align}
	\label{eq:var3}
	a(\widehat{\eta},t)(\phi_0(\cdot,\widehat{\eta},t),\phi_0(\cdot,\widehat{\eta},t))\leq\lambda_0-\frac{2\theta}{3}t+t^2\beta(t).
	\end{align}
	
	%\item 
	For each $\eta\in\mathcal{G}_j$, we define the function
	\begin{align}\label{stareq}
	\phi_{*}^{(j)}(y,\eta,t)=\sum_{r=m}^{R_j}\overline{\left({\partial_l}+i{\eta}_l\right)\phi_r^{(j)}(y_0,\eta,A)}\phi_r^{(j)}(y,\eta,t).
	\end{align}
	For $\phi(\cdot,\eta,t)=\displaystyle\sum_{k=r}^{R_j} a_r\phi_r^{(j)}(\cdot,\eta,A-tB)$, $\phi(\cdot,\eta,t)$ is perpendicular to $\phi_*^{(j)}(\cdot,\eta,t)$ if and only if
	\begin{align}
	\label{eq:perp1}
	\sum_{r=1}^{R_j} a_r \left({\partial_l}+i{\eta}_l\right)\phi_r^{(j)}(y_0,\eta,A) = 0.
	\end{align}
	For $\phi(\cdot,\eta,t)$ satisfying~\eqref{eq:perp1} and $||\phi||_{L^2_\sharp(Y)}=1$, the following holds for $\eta\in\mathcal{G}_j$,
	\begin{align}
		{}&\frac{d}{dt}\left(a(\eta,t)(\phi(\cdot,\eta,t),\phi(\cdot,\eta,t))\right)|_{t=0}\\
		&\quad=-\int_Y b_{l}\left(\frac{\partial}{\partial y_l}+i{\eta_l}\right)\phi(y,\eta,0)\left(\frac{\partial}{\partial y_l}-i{\eta_l}\right)\overline{\phi(y,\eta,0)}~dy\notag\\
		&\quad=-\int_{B_{\epsilon_0}(y_0)}\left|\sum_{r=m}^{R_j} a_r\left(\left({\partial_l}+i{\eta}_l\right)\phi_r^{(j)}(y,\eta,A)- \left({\partial_l}+i{\eta}_l\right)\phi_r^{(j)}(y_0,\eta,A)\right)\right|^2b_{l}~dy\notag\\
		&\quad\geq-\frac{\theta}{3},
	\end{align}where the last inequality follows from~\eqref{continuityofderivatives2}.
	Therefore, the following holds true, uniformly for $\eta\in\mathcal{G}_j$ and $||\phi||_{L^2_\sharp(Y)}=1$,
	\begin{align}
	\label{eq:var4}
	a(\eta,t)(\phi(\cdot,\eta,t),\phi(\cdot,\eta,t))\geq\lambda_0-\frac{\theta}{3}t+t^2\gamma(t).
	\end{align}
	
	%\item 
	To find an upper bound for $\lambda(\widehat{\eta},t)$, we apply the following variational characterization of the eigenvalues of $\mathcal{A}(\eta,t)$ to~\eqref{eq:var3}. If $\phi_1,\phi_2,\ldots,\phi_{m-1}$ are the first $m-1$ eigenfunctions corresponding to the selfadjoint operator $\mathcal{A}(\eta,t)$, then the $m^{th}$ eigenvalue of $\mathcal{A}(\eta,t)$ is given by the formula 	\begin{align*}\lambda_{m}(\eta,t)=\min_{\phi\perp\{\phi_1,\phi_2,\ldots,\phi_{m-1}\},~||\phi||_{L^2_\sharp(Y)}=1}~a(\eta,t)(\phi,\phi).\end{align*}
	Therefore, 
	\begin{align}\label{estimate1}
	\lambda_{m}(\widehat{\eta},t)<\lambda_0-\frac{7\theta}{12}t,
	\end{align}
	for $t$ sufficiently small.
	To find a lower bound for $\lambda_{m+1}(\eta,t)$, we apply another variational characterization for the eigenvalues to ~\eqref{eq:var4}, viz.,
	\begin{align}
	\lambda_{m+1}(\eta,t)=\max_{{\dim V = m}}~\min_{{\phi\perp V,~||\phi||_{L^2_\sharp(Y)}=1}}~a(\eta,t)(\phi,\phi),
	\end{align}
	
	where $V$ varies over $m$-dimensional subspaces of $H^1_\sharp(Y)$.
	
	For each fixed $\eta$ and $t$, take the $m$-dimensional subspace $V$ spanned by the first $m-1$ eigenfunctions of $\mathcal{A}(\eta,t)$ and $\phi_*^{(j)}$ as defined in~\eqref{stareq}, i.e.,  	\begin{align*}V=\{\phi_1(\eta,t), \phi_2(\eta,t),\ldots,\phi_{m-1}(\eta,t),\phi_*^{(j)}(\eta,t)\}.\end{align*} Then, $\phi(\eta,t)$ satisfying the equation~\eqref{eq:perp1} is perpendicular to $V$ and allows us to conclude that
	\begin{align}\label{estimate2}
	\lambda_{m+1}(\eta,t)>\lambda_0-\frac{5\theta}{12}t,
	\end{align}
	for small $t$. The two estimates obtained above~\eqref{estimate1} and~\eqref{estimate2} together imply that the perturbed spectral edge is attained by a single Bloch eigenvalue.
	%\end{enumerate}
\end{proof}

\begin{rem}
	The proof of Theorem~\ref{theorem:3} depends crucially on the interior H\"older continuity of the Bloch eigenfunctions and their derivatives. This requires the coefficients of the elliptic operator to have $W^{1,\infty}_\sharp(Y)$ entries. We attempt to reduce this regularity requirement to $L^\infty$ in Section~\ref{simplicity_edge_2}. 
\end{rem}

\section{Proof of Theorem~\ref{theorem:4}}\label{simplicity_edge_2}

We shall prove Theorem~\ref{theorem:4} for an upper endpoint of a spectral gap. The proof for a lower endpoint is identical. Let $\lambda_0$ be the upper endpoint of a spectral gap of $\mathcal{A}\coloneqq -\nabla\cdot(A\nabla)$, which is achieved by the Bloch eigenvalue $\lambda_m(\eta)$ at finitely many points $\eta_1,\eta_2,\ldots,\eta_N$ in $Y^{'}$. The proof uses ideas from Parnovski and Shterenberg~\cite{ParShteren17} and is divided into the following steps:

\begin{enumerate}
	\item By Proposition~\ref{corollarymultiple}, there is a single perturbation $B$ of the coefficients so that the Bloch eigenvalue $\lambda_m(\eta;A+tB)$ is simple at the points $\eta_1,\eta_2,\ldots,\eta_N$.
	\item However, the perturbation creates new points at which the new spectral edge has been attained. We shall prove that given $\delta>0$, we can find perturbation parameter $t$ such that all the points at which the spectral edge is attained are within $\delta$-distance of the old spectral edge (Lemma~\ref{continuity_spectral_edge}).
	\item We prove that these new spectral edges are not multiple.
\end{enumerate}

We shall require the following preliminaries.

The multiplicity of a Bloch eigenvalue can be reduced at a finite number of points in the dual parameter by application of the same perturbation. This will be the content of the next proposition. 

\begin{prop}\label{corollarymultiple}
	Fix $m\in\mathbb{N}$. Let $S=\{\eta_1,\eta_2,\ldots,\eta_N\}$ be a finite collection of points in $Y^{'}$. Then, there exists a matrix $B$ with $L^\infty_\sharp(Y,\mathbb{R})$-entries and a $t_0$ positive such that for all $t\in(0,t_0]$, the Bloch eigenvalue $\lambda_m(t,\eta)$ of the operator $\mathcal{A}+t\mathcal{B}=-\nabla\cdot(A+tB)\nabla$ is simple for all $\eta_n\in S$, $1\leq n\leq N$.
\end{prop}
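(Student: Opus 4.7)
My plan is to reduce the multi-point statement to the single-point result of Theorem~\ref{theorem:1} and then use Kato--Rellich analyticity along the perturbation ray to upgrade ``simple at one particular perturbation'' to ``simple for all small perturbations along the ray''. The key leverage is that Kato--Rellich produces genuine real-analytic branches (not merely formal power series at $t=0$), so coincidence of two branches on any positive subinterval is impossible once it fails at a single $t$; a single ``endpoint'' of simplicity therefore propagates to a full punctured interval.

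First I would locate a single $A'$ near $A$ that splits every multiple cluster at every $\eta_n$ simultaneously. For each $n$, let $h_n$ denote the multiplicity of $\lambda_m(\eta_n;A)$ and pick $M$ larger than $m+\max_n h_n$, so that every eigenvalue in every cluster around $\lambda_m(\eta_n;A)$ lies among the first $M$. By Lemmas~\ref{lemma:11} and~\ref{lemma:22}, the set of $A'\in M_B^{>}$ for which the first $M$ eigenvalues of $\mathcal{A}(\eta_n;A')$ are all simple is open and dense in $M_B^{>}$ for each fixed $n$. Its finite intersection over $n=1,\ldots,N$ is still open and dense, so I may choose $A'$ in this intersection with $||A'-A||_{L^\infty}$ smaller than $\alpha/(2d)$, where $\alpha$ is a coercivity constant for $A$. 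Set $B\coloneqq A'-A$.

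Next I would consider the ray $A+tB$ for $t\in[0,1]$. Because $||B||_{L^\infty}<\alpha/(2d)$, one has $\sigma_0=\alpha/(2d||B||_{L^\infty})>1$, so by Theorem~\ref{katorellich} applied at each of the finitely many points $\eta_n$ there are $h_n$ real-analytic eigenvalue branches $\mu_1^{(n)}(t),\ldots,\mu_{h_n}^{(n)}(t)$ on $[0,1]$ emanating at $t=0$ from the cluster around $\lambda_m(\eta_n;A)$. Part (4) of Theorem~\ref{katorellich} keeps these branches trapped in a small interval around $\lambda_m(\eta_n;A)$. At $t=1$ the construction of $A'$ ensures that every eigenvalue of $\mathcal{A}(\eta_n;A')$ in this small interval is simple, so the $h_n$ branches $\mu_i^{(n)}(1)$ are pairwise distinct. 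Consequently no difference $\mu_i^{(n)}-\mu_j^{(n)}$ with $i\neq j$ is identically zero; being real-analytic on the compact interval $[0,1]$, each such nontrivial difference has only finitely many zeros. Let $t^*$ be the infimum of the positive zeros of these differences over all triples $(i,j,n)$ and choose any $t_0\in(0,t^*)$. Then on $(0,t_0]$ the $h_n$ branches are pairwise distinct at every $\eta_n$, while continuity of Bloch eigenvalues in the coefficients separates each cluster from the neighboring spectrum for small $t$. Hence $\lambda_m(\eta_n;A+tB)$ is simple for every $n$ and every $t\in(0,t_0]$, which is the desired conclusion.

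The main obstacle I anticipate is precisely the passage from ``simple at one value'' to ``simple on a punctured interval'': Baire-category and fibrewise perturbation arguments of the kind used in the proof of Lemma~\ref{lemma:22} only deliver isolated values of $t$ at which simplicity holds, whereas the proposition requires simplicity throughout an entire half-neighborhood of $0$. The resolution, as outlined above, is to exploit the global real-analyticity of the Kato--Rellich branches on $[0,1]$: simplicity of \emph{all} cluster eigenvalues at $t=1$ (not just of the $m$-th one) is what rules out any branch crossing identically, and the fact that $N$ is finite prevents accumulation of such crossings at $0$.
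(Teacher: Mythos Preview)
Your argument is essentially correct and takes a genuinely different route from the paper. The paper works constructively with the Hellmann--Feynman formula: at each $\eta_n$ it identifies a nonzero $L^1$ function built from products of eigenfunction derivatives, then invokes Lemma~\ref{hahnbanach} to manufacture a single $\beta\in L^\infty_\sharp(Y,\mathbb{R})$ that makes the off-diagonal entry of $[G_B]$ nonzero at \emph{every} $\eta_n$ simultaneously, thereby lowering each multiplicity by at least one; it then iterates this step at most $\max_n h_n$ times. Your argument instead uses the Baire density of the sets $P_M^{(n)}$ from Section~\ref{local_simplicity} to land directly on a good endpoint $A'$, and then exploits real-analyticity of the Rellich branches on all of $[0,1]$ to propagate simplicity back to a full punctured interval $(0,t_0]$. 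The paper's approach is more explicit about what $B$ looks like; yours is shorter, avoids the iteration, and handles the ``for all $t\in(0,t_0]$'' requirement in a single clean stroke.

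One point deserves tightening. Your appeal to part~(4) of Theorem~\ref{katorellich} does not, as stated, guarantee that the $h_n$ cluster branches remain trapped in a fixed interval for \emph{all} $t\in[0,1]$; that statement only says which eigenvalues lie in $K$, not that the designated branches cannot leave $K$ as $t$ grows. What you actually need is that no Rellich branch crosses the boundary of a suitable interval $(a_n,b_n)$ separating the cluster from the rest of the spectrum, and this follows instead from the Lipschitz dependence of the \emph{ordered} eigenvalues on the coefficients established in Section~\ref{local_simplicity}: by choosing $\|A'-A\|_{L^\infty}$ small enough (a further smallness beyond $\|B\|_{L^\infty}<\alpha/(2d)$, which density still permits) one forces $\lambda_{m-1}(\eta_n;A+tB)<a_n$ and $\lambda_{m+h_n}(\eta_n;A+tB)>b_n$ uniformly for $t\in[0,1]$, so neither $a_n$ nor $b_n$ is ever an eigenvalue and the continuous cluster branches cannot exit $(a_n,b_n)$. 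With this in place your conclusion at $t=1$ and the subsequent analyticity argument go through unchanged. An even shorter fix is to choose $A'$ in the still-residual finite intersection $\bigcap_{n=1}^N P^{(n)}$ supplied by Theorem~\ref{theorem:1}, where \emph{all} eigenvalues of each $\mathcal{A}(\eta_n;A')$ are simple; then the cluster branches at $t=1$ are automatically pairwise distinct regardless of which ordered positions they occupy.
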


To this end, we require the following lemma.

\begin{lem}\label{hahnbanach}
	Let $N\in\mathbb{N}$. Let $X$ be a normed linear space over $\mathbb{K}~(\mathbb{R} \mbox{ or } \mathbb{C})$ and let $x_1,x_2,\ldots,x_N$ be non-zero elements of $X$. Then there exists an $x^*\in X^*$ such that $\forall$ $n=1,2,\ldots,N$, $\langle x^*,x_n\rangle\neq 0$ 
\end{lem}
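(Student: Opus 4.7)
The plan is to obtain $x^{*}$ as a suitable linear combination of functionals produced by the Hahn--Banach theorem. First, for each $n = 1, 2, \ldots, N$, apply Hahn--Banach to supply $x_n^{*} \in X^{*}$ with $\langle x_n^{*}, x_n\rangle = \|x_n\| \neq 0$ (since $x_n \neq 0$). I then seek $x^{*}$ in the form
\begin{equation*}
x^{*} = \sum_{n=1}^{N} c_n\, x_n^{*},
\end{equation*}
and the problem reduces to choosing scalars $c_1, \ldots, c_N \in \mathbb{K}$ so that $\langle x^{*}, x_k\rangle \neq 0$ for every $k = 1, 2, \ldots, N$.

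For each $k$, define the linear map $L_k : \mathbb{K}^{N} \to \mathbb{K}$ by $L_k(c_1, \ldots, c_N) = \sum_{n=1}^{N} c_n\, \langle x_n^{*}, x_k\rangle$. If $e_k$ denotes the $k$-th standard basis vector of $\mathbb{K}^{N}$, then $L_k(e_k) = \langle x_k^{*}, x_k\rangle \neq 0$, so $L_k$ is not the zero functional and $\ker L_k$ is a \emph{proper} subspace of $\mathbb{K}^{N}$. Since $\mathbb{K}$ is infinite, a finite union of proper linear subspaces of $\mathbb{K}^{N}$ cannot cover $\mathbb{K}^{N}$; hence there exists $c = (c_1, \ldots, c_N) \in \mathbb{K}^{N} \setminus \bigcup_{k=1}^{N} \ker L_k$. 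For this choice of scalars, $\langle x^{*}, x_k\rangle = L_k(c) \neq 0$ for all $k$, as required.

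A cleaner variant, if preferred, is induction on $N$: the base case $N = 1$ is immediate from Hahn--Banach, and in the inductive step, given $x^{*}$ that separates $x_1, \ldots, x_{N-1}$ from zero and any $y^{*}$ with $\langle y^{*}, x_N\rangle \neq 0$ (again by Hahn--Banach), the functional $x^{*} + t y^{*}$ satisfies all $N$ non-vanishing conditions for every $t \in \mathbb{K}$ outside a finite exceptional set, which can be avoided because $\mathbb{K}$ is infinite. I do not anticipate a serious obstacle; the only point that warrants attention is the use of the infiniteness of $\mathbb{K}$ to ensure that the finitely many forbidden hyperplanes (or values of $t$) do not exhaust the parameter space. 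The Hahn--Banach theorem supplies the raw functionals, and a short linear-algebraic argument finishes the proof.
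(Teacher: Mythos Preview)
Your proof is correct. Both your argument and the paper's rest on the same linear-algebraic fact---that a vector space over an infinite field is not a finite union of proper subspaces---but the organization differs. The paper first restricts to the finite-dimensional subspace $F=\operatorname{span}\{x_1,\ldots,x_N\}$, observes that each annihilator $F_n^*=\{f\in F^*:f(x_n)=0\}$ is a proper subspace of $F^*$, picks a functional in $F^*\setminus\bigcup_n F_n^*$, and only then invokes Hahn--Banach once to extend it to all of $X$. You instead invoke Hahn--Banach $N$ times at the outset to produce $x_1^*,\ldots,x_N^*$, and then carry out the avoidance argument in the coefficient space $\mathbb{K}^N$. The paper's route is marginally more economical (one extension rather than $N$), while yours has the minor advantage that the finite-dimensional argument takes place in a concrete coordinate space. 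Your inductive variant is also fine and is a third way to package the same idea.
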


\begin{proof}
	Consider the finite dimensional subspace $F$ of $X$ spanned by $x_1,x_2,\ldots,x_N$. For each $n=1,2,\ldots,N$, let $F_n^*$ denote the subspace of $F^*$ containing $x^*\in F^*$ such that $\langle x^*,x_n \rangle=0$. Then, $F^*\neq \cup_{n=1}^N F^*_n$ since a vector space cannot be written as a finite union of its proper subspaces. Hence, there exists an $x^*\in F^*$ such that $x^*\not\in\cup_{n=1}^N F^*_n$. Hence, for all $n=1,2,\ldots,N$, $\langle x^*,x_n\rangle\neq 0$. Finally, extend $x^*$ to $X^*$ using the Hahn-Banach Theorem.
\end{proof}

\begin{proof}[Proof of Proposition~\ref{corollarymultiple}]
	As a part of the proof of Lemma~\ref{lemma:22}, we prove that for a given $m\in\mathbb{N}$ and $\eta_0\in Y^{'}$, there exists a $t_0$ positive such that for all $t\in(0,t_0]$, the Bloch eigenvalue $\lambda(t,\eta)$ of the perturbed operator $\mathcal{A}+t\mathcal{B}$ is simple at $\eta_0$. In the present proposition, we shall make a Bloch eigenvalue $\lambda_m(\eta)$ of the operator $\mathcal{A}$ simple at a finite number of points in $Y^{'}$ through a perturbation in the coefficients.
	
	As in the proof of Proposition~\ref{find B}, the perturbation at any $\eta_n \in S$ gives rise to a selfadjoint holomorphic family of type $(B)$, analytic in $\tau\in(-\sigma_0,\sigma_0)$, where $\sigma_0=\frac{\alpha}{2d||B||_{L^\infty}}$. Suppose that the eigenvalue $\lambda_m(\eta_n)$ of the operator $\mathcal{A}(\eta_n)$ has multiplicity $h_n$. For the perturbed operator $-\nabla\cdot(A+\tau B)\nabla$, the eigenvalue $\lambda_m(\eta_n)$ splits into $h_n$ branches. Suppose that the $h_n$ eigenvalues and eigenvectors are given as follows. For $n=1,2,\ldots,N$ and $r=1,2,\ldots,h_n$:
	\begin{align*}\lambda_m^{r}(\tau;\eta_n)=\lambda_m(\eta_n)+\tau a_m^r(\eta_n)+\tau^2\beta_m^r(\tau,\eta_n)\\
	u_m^{r}(\tau;\eta_n)=u_m^r(\eta_n)+\tau v_m^r(\eta_n)+\tau^2 w_m^r(\tau,\eta_n).\end{align*}  
	
	As before, the following system of equations holds true for $n=1,2,\ldots,N$ and $r=1,2,\ldots,h_n$:
	\begin{equation*}
	\int_{Y}B(\nabla+i\eta_n) u_m^r(\eta_n)\cdot\overline{(\nabla+i\eta_n) u_m^s(\eta_n)}~dy=a_m^r(\eta_n)\delta_{rs}.
	\end{equation*}
	
	The above equations define operators that act on the unperturbed eigenspaces at each $\eta_n$. The multiplicity would go down if we find $B$ and bases for the unperturbed eigenspaces in which some off-diagonal entry, in particular, the $(1,2)$-entry is non-zero. To achieve this, we proceed as in the proof of Proposition~\ref{find B}. For any choice of basis of the unperturbed eigenspace at $\eta_n$, we find that either~\eqref{alternative1} or~\eqref{alternative2} holds. However, we cannot use this idea anymore, since, different $\eta_n$ would have different matrices $B$. To remedy this, we notice that, at each $\eta_n=(\eta_{n,1},\eta_{n,2},\ldots,\eta_{n,d})$, for a basis given by $\{f_n^1,f_n^2,\ldots,f_n^{h_n}\}$ either
	\begin{equation}\label{alternative3}
	\sum_{l=1}^d(\partial_l+i\eta_{n,l}){f_n^1}(\partial_l-i\eta_{n,l})\overline{f_n^2}\not\equiv0,
	\end{equation}  
	
	or, if the above sum is zero, then in the modified basis $\{f_n^1,f_n^1+f_n^2,f_n^3,\ldots,f_n^{h_n}\}$,
	\begin{equation}\label{alternative4}
	\sum_{l=1}^d(\partial_l+i\eta_{n,l}){f_n^1}(\partial_l-i\eta_{n,l})\overline{(f_n^1+f_n^2)}=\sum_{l=1}^d|(\partial_l+{i}\eta_{n,l}){f_n^1}|^2\not\equiv0,
	\end{equation}
	provided that $f_n^1\neq \exp(-i\eta_n\cdot y)$.
	
	We can always choose $f_n^1$ to be a function different from $\exp(-i\eta_n\cdot y)$ since at any of the $\eta_n$, we have an eigenspace of dimension greater than $1$. For each $\eta_n$, call the non-zero sum between~\eqref{alternative3} and~\eqref{alternative4} as $p_n$. Further, take either $\Re(p_n)$ or $\Im(p_n)$ depending on whichever is non-zero. If both are non-zero, we may take either one. This will make sure that we have a collection of only real-valued functions.
	
	By the above procedure, we have $N$ elements of $L^1_\sharp(Y,\mathbb{R})$, again labelled as $\{p_1,p_2,\ldots,p_N\}$. By Lemma~\ref{hahnbanach}, there is an $\alpha\in (L^1_\sharp(Y,\mathbb{R}))^*$ such that $\alpha(p_n)\neq 0$ for all $n=1,2,\ldots,N$. By duality, there exists a $\beta\in L^\infty_\sharp(Y,\mathbb{R})$ such that $\alpha(p_n)=\int_Y \beta p_n~dy\neq 0$. 
	
	Define $B=diag(\beta,\beta,\ldots,\beta)$, then either,
	\begin{equation*}
	\Re{\int_{Y}B(\nabla+i\eta_n) f_n^1\cdot\overline{(\nabla+i\eta_n) f_n^2}~dy}\neq 0,
	\end{equation*}
	or 
	\begin{equation*}
	\Re{\int_{Y}B(\nabla+i\eta_n) f_n^1\cdot(\nabla-i\eta_n)(\overline{f_n^1}+\overline{f_n^2})~dy}\neq 0,
	\end{equation*}
	depending on $\eta_n$. 
	
	At the end of this step, the multiplicity of $\lambda_m(\eta)$ at each of the points $\eta_n$ will reduce at least by $1$. We repeat the procedure with the points among $\{\eta_1,\eta_2,\ldots,\eta_N\}$ where the eigenvalue is still multiple. Finally, we require at most $M$ steps to make the Bloch eigenvalue simple at each of these points, where $M=\displaystyle\max_{1\leq n\leq N}{h_n}$.
\end{proof}

In the next lemma, we shall prove that a spectral edge does not move very far for small perturbations in the coefficients of the periodic operator $\mathcal{A}$. We shall denote the operator $-\nabla\cdot(A+tB)\nabla$ as $\mathcal{A}+t\mathcal{B}$, where $\mathcal{A}=-\nabla\cdot(A\nabla)$ and $\mathcal{B}=-\nabla\cdot(B\nabla)$. Let $S_t$ denote the set of points at which the new spectral edge is attained, i.e., 
\begin{align*}
S_t \coloneqq \{\eta\in Y^{'} : \mbox{ The Bloch eigenvalue } \lambda_{m}(\eta;A+tB)\mbox{ attains the spectral edge at } \eta.\}
\end{align*}

\begin{lem}\label{continuity_spectral_edge}
	Let $N\in\mathbb{N}$. Let $A\in M_B^>$ and let $B$ be a real symmetric matrix with $L^\infty_\sharp(Y)$-entries. Let $\mathcal{A}=-\nabla\cdot(A\nabla)$ be a periodic elliptic differential operator. Let $\lambda_0$ be the upper endpoint of a spectral gap, which is attained by the Bloch eigenvalue $\lambda_m(\eta)$ at finitely many points $\eta_1,\eta_2,\ldots,\eta_N$ in $Y^{'}$. Given a $ \delta$ belonging to the open interval $(0,1)$, there is a $t_0$ such that
		\begin{align*}\mbox{for }t\in(0,t_0],\qquad S_t\subset \bigcup_{j=1}^{N}B(\eta_j,\delta).\end{align*}
\end{lem}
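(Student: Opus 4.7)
The plan is to combine the uniform Lipschitz dependence of the Bloch eigenvalues on the coefficient matrix with a compactness argument that produces a positive gap between $\lambda_0$ and the unperturbed eigenvalue away from the minimizers $\eta_1,\ldots,\eta_N$. A small perturbation cannot shift eigenvalues by more than this gap, so the new set of minimizers $S_t$ is forced into prescribed neighborhoods.

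More concretely, invoking the proposition at the beginning of Section~\ref{local_simplicity} (the continuous dependence of Bloch eigenvalues on the coefficients), there exists a constant $C_m>0$ such that
\begin{align*}
|\lambda_m(\eta;A+tB)-\lambda_m(\eta;A)|\leq C_m\, t\, \|B\|_{L^\infty}, \qquad \text{for all } \eta\in Y^{'} \text{ and small } t>0.
\end{align*}
(If $B\equiv 0$ the statement is vacuous, so assume $\|B\|_{L^\infty}>0$.) Fix $\delta\in(0,1)$ and set $K_\delta\coloneqq Y^{'}\setminus\bigcup_{j=1}^N B(\eta_j,\delta)$. Since $\lambda_m(\cdot;A)$ is continuous on the compact torus $Y^{'}$ and attains the value $\lambda_0$ only at the points $\eta_1,\ldots,\eta_N$, all of which lie in the complement of $K_\delta$, the quantity
\begin{align*}
\gamma_\delta\coloneqq \min_{\eta\in K_\delta}\lambda_m(\eta;A)-\lambda_0
\end{align*}
is strictly positive by compactness.

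The final step is a direct comparison. Choose $t_0\coloneqq \gamma_\delta/(4C_m\|B\|_{L^\infty})$. For every $t\in(0,t_0]$ and every $\eta\in K_\delta$, the Lipschitz bound gives
\begin{align*}
\lambda_m(\eta;A+tB)\geq \lambda_0+\gamma_\delta-\tfrac{\gamma_\delta}{4}=\lambda_0+\tfrac{3\gamma_\delta}{4},
\end{align*}
whereas at any one of the original minimizers $\eta_j$,
\begin{align*}
\lambda_m(\eta_j;A+tB)\leq \lambda_m(\eta_j;A)+\tfrac{\gamma_\delta}{4}=\lambda_0+\tfrac{\gamma_\delta}{4}.
\end{align*}
Since $\lambda_0+\gamma_\delta/4<\lambda_0+3\gamma_\delta/4$, no point of $K_\delta$ can attain the infimum of $\lambda_m(\cdot;A+tB)$ over $Y^{'}$, so every element of $S_t$ lies in $\bigcup_{j=1}^N B(\eta_j,\delta)$, as required.

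I do not anticipate a genuine obstacle: the whole argument is a continuity-and-compactness sandwich, once the uniform Lipschitz estimate from Section~\ref{local_simplicity} is in place. The only point that merits a brief remark is that the constant $C_m$ of that proposition, which a priori depends on the $m$-th eigenvalue $c_m(\eta)$ of the shifted Laplacian $-(\nabla+i\eta)^2$, can be taken independent of $\eta$; this follows because $c_m(\cdot)$ is continuous on the compact set $Y^{'}$ and hence may be replaced by its maximum. Implicit in calling the resulting infimum a spectral edge of the perturbed operator is the preservation of the spectral gap under small perturbations, which in turn follows from the same joint-continuity estimate applied to the bands $\lambda_{m-1}$ and $\lambda_m$.
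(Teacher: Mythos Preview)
Your proof is correct. Both your argument and the paper's rest on the same two ingredients---continuous dependence of $\lambda_m(\eta;\cdot)$ on the coefficients and compactness of $Y'$---but they are packaged differently. The paper argues by contradiction: it assumes a sequence $t_n\to 0$ with $\xi_n\in S_{t_n}$ staying $\delta$-far from every $\eta_j$, extracts a convergent subsequence $\xi_n\to\hat\xi$, shows via the min-max principle that $\lambda_m(\xi_n;A+t_nB)\to\lambda_m(\hat\xi;A)$ while simultaneously $\lambda_0(A+t_nB)\to\lambda_0(A)$, and concludes that $\hat\xi$ would be an additional edge point. Your direct argument is more economical and yields an explicit $t_0$ in terms of the quantifiable gap $\gamma_\delta=\min_{K_\delta}\lambda_m(\cdot;A)-\lambda_0$, which the contradiction argument does not produce. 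Your closing remark about bounding $c_m(\eta)$ uniformly over $Y'$ is exactly the point that turns the fixed-$\eta_0$ proposition of Section~\ref{local_simplicity} into the uniform estimate you need; the paper sidesteps this by applying min-max directly at each point along the sequence.
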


\begin{proof}
	We prove this lemma by contradiction. Assume that there is a $\delta\in(0,1)$ and sequences $(t_n)$ and $(\xi_n)$ such that $t_n\to 0$ and $\xi_n\in S_{t_n}$ such that
	\begin{equation}\forall~1\leq j\leq N,\qquad|\xi_n-\eta_j|\geq \delta.\label{farpoint}\end{equation}
	
	Let $\lambda_0(A+tB)$ denote the spectral edge associated to the operator $\mathcal{A}+t\mathcal{B}$. The perturbed spectral edge satisfies the following inequality.
	\begin{align}
		|\lambda_0(A)-\lambda_0(A+t_nB)|& = |\min_{\eta\in Y^{'}}\lambda_m(\eta;A)-\min_{\eta\in Y^{'}}\lambda_m(\eta;A+t_nB)|\notag\\
		&\quad = |-\max_{\eta\in Y^{'}}(-\lambda_m(\eta;A))+\max_{\eta\in Y^{'}}(-\lambda_m(\eta;A+t_nB))|\notag\\
		&\quad\leq\max_{\eta\in Y^{'}}|\lambda_m(\eta;A)-\lambda_m(\eta;A+t_nB)|\notag\\
		&\quad\leq Ct_n.\label{edgeconvergence}
	\end{align}
	
	Since $(\xi_n)$ is a bounded sequence in $Y^{'}$, a subsequence of $(\xi_n)$ converges to $\hat{\xi}$, which we continue to denote by $(\xi_n)$.
	
	We shall prove that
	\begin{align}\label{diagconvergence}
	\lambda_m(\xi_n;A+t_nB)\to\lambda_m(\hat{\xi};A).
	\end{align}

	Observe that,
	\begin{align*}
		{}&\left|\int (A+t_n B)(\nabla+i\xi_n)u(\nabla-i\xi_n)\bar{u}dy-\int A(\nabla+i\hat{\xi})u(\nabla-i\hat{\xi})\bar{u}dy\right|\leq\\
		&\quad\left|\int A(\nabla+i\xi_n)u(\nabla-i\xi_n)\bar{u}dy-\int A(\nabla+i\hat{\xi})u(\nabla-i\hat{\xi})\bar{u}dy\right|+t_n||B||_{L^\infty}\int (\nabla+i\xi_n)u(\nabla-i\xi_n)\bar{u}dy
	\end{align*}
	
	Divide throughout by $||u||^2_{L^2(Y)}$ and apply the min-max principle to obtain the following inequality.
	\begin{align}\label{triangle1}
		|\lambda_m(\xi_n;A+t_nB)-\lambda_m(\hat{\xi};A)|\leq|\lambda_m(\xi_n)-\lambda_m(\hat{\xi})|+t_n||B||_{L^\infty}|\lambda_m(\xi_n;I)|.
	\end{align}
	
	In order to establish~\eqref{diagconvergence}, notice that the first and second part of~\eqref{triangle1}  converge to $0$ by the Lipschitz continuity of $\lambda_m(\cdot)$ and the boundedness of $\lambda_m(\xi_n;I)$, respectively.

	It follows from~\eqref{edgeconvergence} and~\eqref{diagconvergence} that $\lambda_0(A)=\lambda_m(\hat{\xi};A)$  and hence, $\hat{\xi}$ is also a spectral edge. By~\eqref{farpoint}, this contradicts the initial assumption that there are only $N$ points at which the spectral edge is attained.
\end{proof}

\begin{proof}[Proof of Theorem~\ref{theorem:4}]
	The spectral edge of the operator $\mathcal{A}$ is attained at finitely many points $\eta_1,\eta_2,\ldots,\eta_N$ in $Y^{'}$. Choose among $\eta_1,\eta_2,\ldots,\eta_N$ the points where the Bloch eigenvalue $\lambda_m(\eta)$ is not a simple eigenvalue. Now, apply Proposition~\ref{corollarymultiple} to these points, so that for the perturbed operator $\mathcal{A}+t\mathcal{B}$, the corresponding Bloch eigenvalue becomes simple at these points. The points which were simple to begin with, will remain simple for sufficiently small $t$.
	
	There is a neighborhood $\mathcal{O}_j$ of each of the points $(\eta_j)_{j=1}^N$ in which the Bloch eigenvalue is simple for a range of $t$. Each of these neighborhoods contain a ball, $B(\eta_j,\delta_j)$ of radius $\delta_j$ centered at $\eta_j$. Let $\delta\coloneqq\displaystyle\min_{1\leq j\leq N}\delta_j$, then by Lemma~\ref{continuity_spectral_edge}, there exists a $t_0$ positive such that for all $t\in(0,t_0]$, the spectral edge of the perturbed operator $\mathcal{A}+t\mathcal{B}$ is contained in the union of the balls $\displaystyle\bigcup_{j=1}^N B(\eta_j,\delta)$. 
	
	Hence, we have obtained a perturbation of the operator $\mathcal{A}$ such that its spectral edge is simple.
\end{proof}

\section{An Application to the Theory of Homogenization}\label{homogen}

Birman and Suslina~\cite{BirmanSuslina2003} have described homogenization as a {\it spectral threshold effect}. Their analysis focuses on finding norm resolvent estimates of different orders. For the operator $\mathcal{A}$, it is known that $\inf \sigma(\mathcal{A})=0$. This corresponds to the bottom edge of its spectrum. A non-zero spectral edge is called an internal edge. The notion of homogenization has been extended to internal edges in~\cite{Birman2004},~\cite{Birman2006}.

\subsection{Internal Edge Homogenization}\label{internal1}

In this subsection, we review the internal edge homogenization theorem of Birman and Suslina~\cite{Birman2006}. Consider the  equation~\eqref{anotherequation} corresponding to the operator $\mathcal{A}$~\eqref{eq1:operator}. Let $\lambda_0$ denote an internal edge, corresponding to the upper endpoint of a spectral gap of $\mathcal{A}$ and let $m$ be the smallest index such that the Bloch eigenvalue $\lambda_{m}$ attains $\lambda_0$, then
\begin{align*}
	\lambda_0=\min_{\eta\in Y^{'}}\lambda_{m}(\eta).
\end{align*}

Birman and Suslina~\cite{Birman2006} make the following regularity assumptions on $\lambda_0$. These are exactly the properties of spectral edge that are required in order to define effective mass in the theory of motion of electrons in solids~\cite{Filonov15}.

\begin{enumerate}[label={(B\arabic*)}]
	\item\label{assumptions1:1} $\lambda_0$ is attained by the $m^{th}$ Bloch eigenvalue $\lambda_{m}(\eta)$ at finitely many points $\eta_1,\eta_2,\ldots,\eta_N$.
	\item\label{assumptions1:2}For $j=1,2,\ldots,N$, $\lambda_{m}(\eta)$ is simple in a neighborhood of $\eta_j$, therefore, $\lambda_{m}(\eta)$ is analytic in $\eta$ near $\eta_j$.
	\item\label{assumptions1:3} For $j=1,2,\ldots,N$, $\lambda_{m}(\eta)$ is non-degenerate at $\eta_j$, i.e., 
	\begin{align*}
		\lambda_{m}(\eta)-\lambda_0=(\eta-\eta_j)^T B_j (\eta-\eta_j)+{O}(|\eta-\eta_j|^3),\mbox{ for } \eta \mbox{ near } \eta_j,
	\end{align*}
 where $B_j$ are positive definite matrices.
\end{enumerate}

Under these assumptions, the internal edge homogenization theorem is proved.

\begin{thm}[\cite{Birman2006}]\label{BirmanSuslina} Let $\mathcal{A}$ be the operator in $L^2(\mathbb{R}^d)$ defined by~\eqref{eq1:operator} and let $\lambda_0$ be an internal edge of the spectrum of $\mathcal{A}$. Assume conditions~\ref{assumptions1:1},~\ref{assumptions1:2},~\ref{assumptions1:3} and let $\varkappa^2>0$ be small enough so that $\lambda_0-\varkappa^2$ is in the spectral gap. Let $\mathcal{A}^\epsilon$ denote the unbounded operator $-\nabla\cdot\left(A(\frac{x}{\epsilon})\nabla\right)$ defined in $L^2(\mathbb{R}^d)$. For $1\leq j\leq N$, let $\psi_j(y,\eta_j)\coloneqq\exp(iy\cdot\eta_j)\phi_j(y)$, where $\phi_j$ is the eigenvector corresponding to the eigenvalue $\lambda_{0}=\lambda_{m}(\eta_j)$ of the operator $\mathcal{A}(\eta_j)=-(\nabla+i\eta_j)\cdot A(\nabla+i\eta_j)$. Then,
	\begin{align*}
	||R(\epsilon)-R^0(\epsilon)||_{L^2(\mathbb{R}^d)\to L^2(\mathbb{R}^d)}={O}(\epsilon)\quad\mbox{as}\quad\epsilon\to 0\quad&\mbox{where},\\
	R(\epsilon)=\left(\mathcal{A}^\epsilon-(\epsilon^{-2}\lambda_{0}-\varkappa^2)I\right)^{-1} \mbox{and}\quad R^0(\epsilon)\coloneqq&|Y|\sum_{j=1}^N [\psi_j^\epsilon]\left(B_j\nabla^2+\varkappa^2I\right)^{-1}[\overline{\psi_j^\epsilon}]\end{align*} are bounded operators on $L^2(\mathbb{R}^d)$ and $||\cdot||_{L^2(\mathbb{R}^d)\to L^2(\mathbb{R}^d)}$ denotes the operator norm. Here, $[f]$ denotes the operation of multiplication by the function $f$.
\end{thm}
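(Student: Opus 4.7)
The plan is to combine the Bloch/Floquet decomposition of $\mathcal{A}^\epsilon$ with a careful local spectral analysis near each of the parameter points $\eta_j$ where $\lambda_0$ is attained, identifying the leading singular contribution via the non-degenerate quadratic expansion \ref{assumptions1:3}. First I would apply the unitary rescaling $(U_\epsilon f)(x)\coloneqq \epsilon^{-d/2} f(x/\epsilon)$, which conjugates $\mathcal{A}^\epsilon$ to $\epsilon^{-2}\mathcal{A}$, so that
\begin{equation*}
R(\epsilon) \;=\; U_\epsilon\,\epsilon^2\bigl(\mathcal{A}-\lambda_0 I + \epsilon^2\varkappa^2 I\bigr)^{-1}\,U_\epsilon^{-1}.
\end{equation*}
Using the direct integral decomposition and the spectral resolutions $\mathcal{A}(\eta)=\sum_n \lambda_n(\eta) P_n(\eta)$, the inner factor becomes
\begin{equation*}
\int^{\bigoplus}_{Y'}\sum_n \frac{\epsilon^2\, P_n(\eta)}{\lambda_n(\eta)-\lambda_0+\epsilon^2\varkappa^2}\,d\eta.
\end{equation*}
The singular behaviour as $\epsilon\to 0$ originates entirely from the term $n=m$ with $\eta$ close to one of the $\eta_j$; elsewhere the denominator admits a positive lower bound.

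Next I would split the integral by a partition of unity on $Y'$ subordinated to $\{U_j\}\cup\{Y'\setminus\bigcup_j U_j\}$ for small neighbourhoods $U_j$ of $\eta_j$. The contribution from the $m$-th band over $Y'\setminus\bigcup_j U_j$ is $O(\epsilon^2)$ in operator norm by the uniform lower bound on $\lambda_m(\eta)-\lambda_0$ there; bands with $n<m$ are handled by the spectral gap, while bands with $n>m$ are handled by \ref{assumptions1:2}, which gives $\lambda_{m+1}(\eta)-\lambda_0\ge c>0$ near each $\eta_j$ and $\lambda_n(\eta)\ge\lambda_m(\eta)\ge\lambda_0+c'$ otherwise, combined with Weyl-type growth of $\lambda_n(\eta)$ to control the sum over $n$. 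On each $U_j$, by \ref{assumptions1:2} both $\lambda_m(\eta)$ and the projector $P_m(\eta)=|\phi_m(\cdot,\eta)\rangle\langle\phi_m(\cdot,\eta)|$ are analytic, and combined with \ref{assumptions1:3} one has
\begin{equation*}
\lambda_m(\eta)-\lambda_0=(\eta-\eta_j)^T B_j(\eta-\eta_j)+O(|\eta-\eta_j|^3),\qquad \phi_m(\cdot,\eta)=\phi_j+O(|\eta-\eta_j|).
\end{equation*}
Rescaling the dual variable by $\zeta=(\eta-\eta_j)/\epsilon$, the $j$-th local contribution becomes, modulo an $O(\epsilon)$ error,
\begin{equation*}
\epsilon^d\int_{\mathbb{R}^d}\frac{|\phi_j\rangle\langle\phi_j|}{\zeta^T B_j\zeta+\varkappa^2}\,d\zeta.
\end{equation*}
Inverting the Bloch/Floquet transform and conjugating back by $U_\epsilon$ turns this into $|Y|\,[\psi_j^\epsilon]\bigl(B_j\nabla^2+\varkappa^2 I\bigr)^{-1}[\overline{\psi_j^\epsilon}]$ through the Fourier symbol correspondence, and summing over $j$ reconstructs $R^0(\epsilon)$.

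The hardest part will be establishing the uniform operator-norm $O(\epsilon)$ bound on every error term produced along the way. Three items require particular care: the cubic Taylor remainder in $\lambda_m$ contributes $\epsilon|\zeta|^3$ in the denominator and must be shown to produce a \emph{bounded} operator correction rather than merely a fibrewise estimate; the first-order perturbation $P_m(\eta)-|\phi_j\rangle\langle\phi_j|=O(|\eta-\eta_j|)$ must be proved integrable against $(\zeta^T B_j\zeta+\varkappa^2)^{-1}$ in the scaled variable, which works because the quadratic denominator supplies a gain of two powers of $|\zeta|^{-1}$ at infinity; and the bands $n>m$ must be controlled globally over $Y'$ and summed in $n$, where the $\epsilon^2$ factor in the numerator is essential. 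A partition of unity on $Y'$ is the organizing device that allows these local analyses to be glued together and absorbed into a single $O(\epsilon)$ remainder.
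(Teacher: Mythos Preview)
The paper does not supply its own proof of this theorem: it is quoted from Birman--Suslina~\cite{Birman2006} and stated without argument, so there is no proof in the paper to compare against directly. That said, the paper's proof of Lemma~\ref{resolventlemma2} is essentially the same argument carried out for the perturbed operator in the multiple-edge setting, and your outline matches it closely: the scaling $U_\epsilon$ is the paper's $T_\epsilon$, the Bloch decomposition of the resolvent is written out explicitly, the localization near $\eta_j$ is done via a characteristic function $\chi$ and the projection $F=\sum_j\tilde{\mathcal B}_{m+j}^*\chi\tilde{\mathcal B}_{m+j}$ rather than a smooth partition of unity, and the cubic Taylor remainder is controlled by the same elementary inequality you describe (splitting one power of $|\eta-\eta_j|$ against the non-degenerate quadratic and two powers against the full denominator). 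Your sketch is correct and is, up to cosmetic differences, the Birman--Suslina argument that the paper adapts in Section~\ref{internal_homogenization_result}.
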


\subsection{Internal Edge Homogenization for a multiple spectral edge} \label{internal2}

In this section, we shall prove a theorem corresponding to internal edge homogenization of the operator $\mathcal{A}^\epsilon=-\nabla\cdot\left(A(\frac{x}{\epsilon})\nabla\right)$ in $L^2(\mathbb{R}^d)$ in the presence of multiplicity. We shall interpret the three assumptions~\ref{assumptions1:1},~\ref{assumptions1:2},~\ref{assumptions1:3} that have been made on the spectral edge as hypotheses on the shape and structure of the spectral edge. Without knowledge of the shape and structure of the spectral edge, it is not possible to obtain any explicit homogenization result. 

Starting with a spectral edge which is not simple, we shall appeal to Theorem~\ref{theorem:4} to modify the spectral edge so that it becomes simple. We shall make the following assumptions on the spectral edge. We assume the finiteness of the number of points at which the spectral edge is attained, however, since the contributions from different points are added up, we may as well assume that the spectral edge is attained at one point. Therefore, suppose that for the operator~\eqref{eq1:operator}, a spectral gap exists. Let $\lambda_0$ denote the upper endpoint of this spectral gap of $\mathcal{A}$ and let $m$ be the smallest index such that the Bloch eigenvalue $\lambda_{m}$ attains $\lambda_0$, then $\displaystyle\lambda_0=\min_{\eta\in Y^{'}}\lambda_{m}(\eta)$.

Suppose that the spectral edge is attained at a unique point $\eta_0\in Y^{'}$. Also suppose that the eigenvalue $\lambda_0$ has multiplicity $2$. Therefore, there exists a neighborhood $\mathcal{O}$ of $\eta_0$, on which the Bloch eigenvalue $\lambda_{m}(\eta)$ is simple except at $\eta_0$. Now, a perturbation matrix $B$ with $L^\infty_\sharp(Y,\mathbb{R})$ entries, as in Theorem~\ref{theorem:4}, is applied to the coefficients of operator $\mathcal{A}$, so that the new operator $\tilde{\mathcal{A}}(t)=\mathcal{A}+t\mathcal{B}$, has a simple spectral edge $\tilde{\lambda}_0(t)$ for sufficiently small $t$. However, the perturbed Bloch eigenvalues $\tilde{\lambda}_{m}(\eta,t)$ and $\tilde{\lambda}_{m+1}(\eta,t)$ are simple in the neighborhood $\mathcal{O}$ for small enough $t$. These properties follow from the analyticity of the projection operator~\eqref{ProjectionOperator}, $P(\eta;A+tB)$, which is a consequence of the operator family $\tilde{\mathcal{A}}(t)$ being a holomorphic family of type $(B)$. For more details, see Appendix~\ref{PerturbationTheory}.

For the perturbed spectral edge, we assume the following hypothesis

\begin{enumerate}[label={(C\arabic*)}]
	\item\label{assumptions2:1} $\tilde{\lambda}_{m}(\eta;t)$ attains minimum $\tilde{\lambda}_0(t)$ at a unique point $\eta_0(t)\in\mathcal{O}$ and is non-degenerate on $\mathcal{O}$, i.e., 
	\begin{align*}
	\tilde{\lambda}_{m}(\eta;t)-\tilde{\lambda}_0(t)=(\eta-\eta_0(t))^T \tilde{B}_0(t) (\eta-\eta_0(t))+{O}(|\eta-\eta_0(t)|^3),
	\end{align*}
	for $\eta\in\mathcal{O}$, where $\tilde{B}_0(t)$ is positive definite, i.e., there is $\alpha_0>0$, independent of $t$, such that $\tilde{B}_0(t)>\alpha_0 I$. Further, the order above holds uniformly for sufficiently $t$.
	\item\label{assumptions2:2} $\tilde{\lambda}_{m+1}(\eta;t)$ attains  minimum $\tilde{\lambda}_1(t)$ at a unique point $\eta_1(t)\in\mathcal{O}$ and is non-degenerate on $\mathcal{O}$, i.e., 
	\begin{align*}
	\tilde{\lambda}_{m+1}(\eta;t)-\tilde{\lambda}_1(t)=(\eta-\eta_1(t))^T \tilde{B}_1(t) (\eta-\eta_1(t))+{O}(|\eta-\eta_1(t)|^3),
	\end{align*}
	for $\eta\in\mathcal{O}$, where $\tilde{B}_1(t)$ is positive definite, i.e., there is $\alpha_1>0$, independent of $t$, such that $\tilde{B}_1(t)>\alpha_1 I$. Further, the order above holds uniformly for sufficiently $t$.
\end{enumerate}

In essence, we are asking for the Bloch eigenvalues to have the shapes before and after the perturbation as in Fig.~\ref{figure2} and Fig.~\ref{figure3}.

\begin{figure}
	\centering
	\includegraphics[scale=0.5]{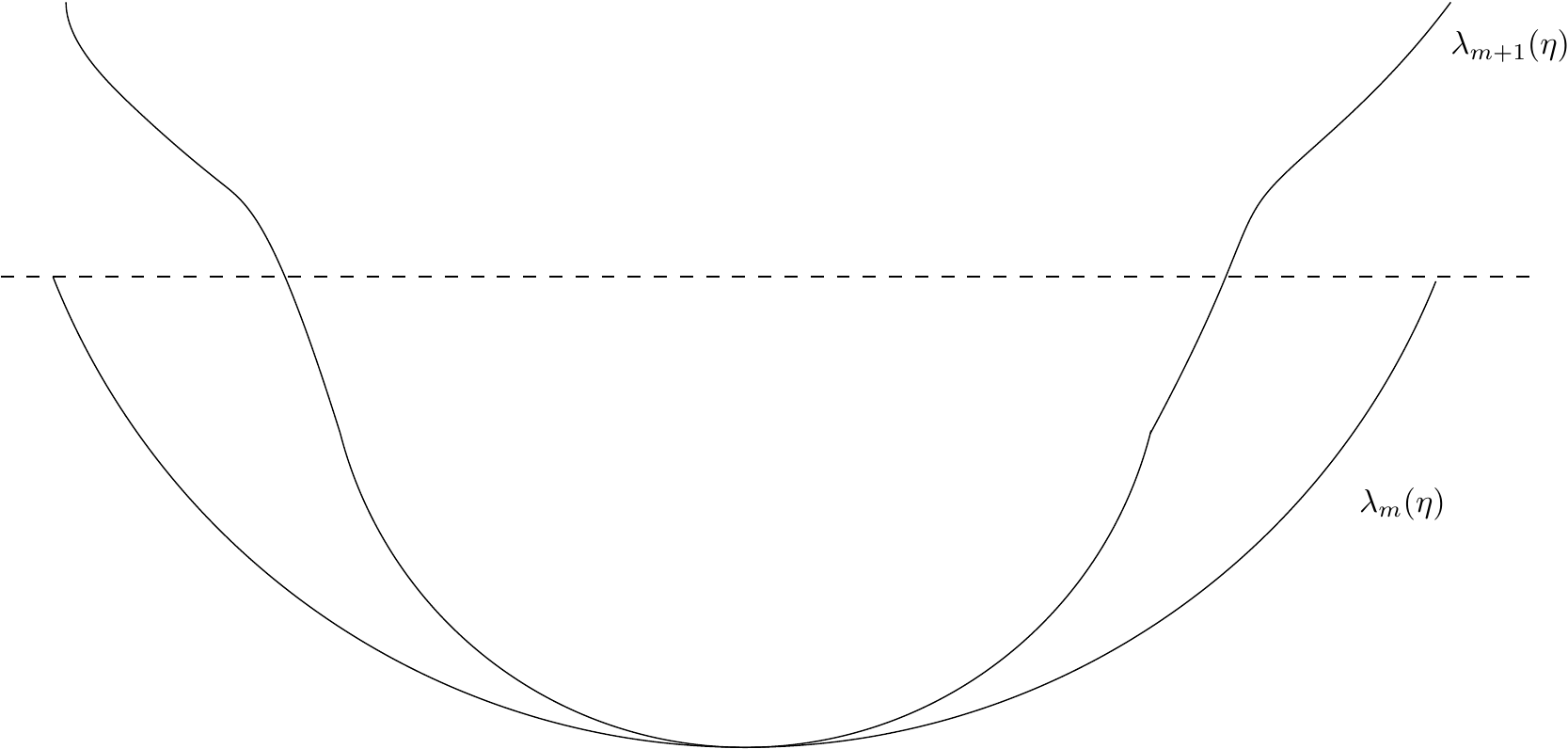}	
	\caption{Spectral Edge before perturbation.}\label{figure2}
\end{figure}

\begin{figure}
	\centering
	\includegraphics[scale=0.5]{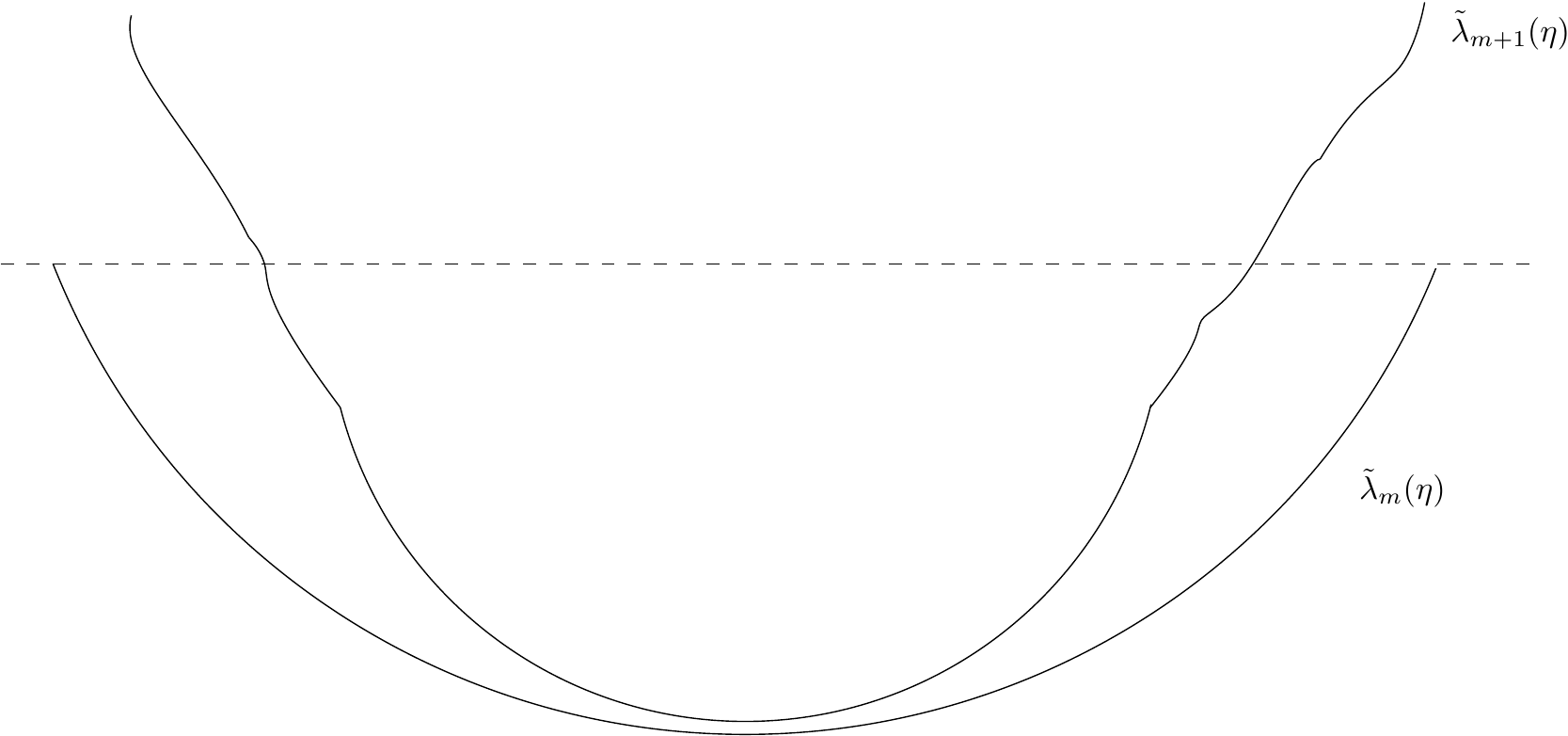}	
	\caption{Spectral Edge after perturbation.}\label{figure3}
\end{figure}
We will now set up notation for the internal edge homogenization theorem that we intend to prove. For $j=0,1$, let $\tilde{\psi}_{m+j}(y,\eta_j(t))=\exp(iy\cdot\eta_j(t))\tilde{\phi}_{m+j}(y;t)$, where $\tilde{\phi}_{m+j}$ is a normalized eigenvector corresponding to the eigenvalue $\tilde{\lambda}_{j}(t)=\tilde{\lambda}_{m+j}(\eta_j(t))$ of $\tilde{\mathcal{A}}(\eta_j;t)=-(\nabla+i\eta_j)\cdot (A+tB)(\nabla+i\eta_j)$. In what follows, we shall choose $t={O}(\epsilon^4)$. Define the following operators
\begin{align}
\label{resolvent1}
{R}(\epsilon)\coloneqq\left(\mathcal{A}^\epsilon-(\epsilon^{-2}\lambda_{0}-\varkappa^2)I\right)^{-1},\mbox{ and}
\end{align}
\begin{align}
	\label{resolvent2}
	\tilde{R}^0(\epsilon)\coloneqq|Y|[\tilde{\psi}_m^\epsilon]\left(\tilde{B}_0(t)\nabla^2+\varkappa^2I\right)^{-1}[\overline{\tilde{\psi}_m^\epsilon}]+|Y|[\tilde{\psi}_{m+1}^\epsilon]\left(\tilde{B}_1(t)\nabla^2+\varkappa^2I\right)^{-1}[\overline{\tilde{\psi}_{m+1}^\epsilon}].
\end{align}

We shall require the following two lemmas.

\begin{lem}\label{resolventlemma1}
	Let 
	\begin{align}
	\label{resolvent3}
	\tilde{{R}}(\epsilon)\coloneqq\left(\tilde{\mathcal{A}}^\epsilon(t)-(\epsilon^{-2}\tilde{\lambda}_{0}(t)-\varkappa^2)I\right)^{-1},
	\end{align} 
	where $\tilde{\mathcal{A}}^\epsilon(t)=-\nabla\cdot\left(A(\frac{x}{\epsilon})+tB(\frac{x}{\epsilon})\right)\nabla$ is an unbounded operator in $L^2(\mathbb{R}^d)$, satisfying assumptions~\ref{assumptions2:1} and~\ref{assumptions2:2}. Choose $t={O}(\epsilon^4)$.  Then,
	\begin{equation*}
	||R(\epsilon)-\tilde{R}(\epsilon)||_{L^2(\mathbb{R}^d)\to L^2(\mathbb{R}^d)}={O}(\epsilon)\quad\mbox{as}\quad \epsilon\to 0.
	\end{equation*}
\end{lem}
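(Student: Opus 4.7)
The plan is to apply the second resolvent identity and bound the two resulting pieces in the $L^{2}(\mathbb{R}^{d})$ operator norm. With the abbreviations $\mathcal{B}^{\epsilon}\coloneqq -\nabla\cdot B(x/\epsilon)\nabla$, $z(\epsilon)\coloneqq\epsilon^{-2}\lambda_{0}-\varkappa^{2}$, and $\tilde{z}(\epsilon,t)\coloneqq\epsilon^{-2}\tilde{\lambda}_{0}(t)-\varkappa^{2}$, the identity reads
\begin{equation*}
R(\epsilon)-\tilde{R}(\epsilon)=R(\epsilon)\bigl[\,t\mathcal{B}^{\epsilon}-\bigl(\tilde{z}(\epsilon,t)-z(\epsilon)\bigr)I\,\bigr]\tilde{R}(\epsilon).
\end{equation*}
Throughout the argument, both resolvents will be uniformly bounded by $\varkappa^{-2}$: hypotheses~\ref{assumptions2:1}--\ref{assumptions2:2}, combined with Theorem~\ref{theorem:4}, ensure that $\tilde{z}(\epsilon,t)$ continues to lie at distance $\varkappa^{2}$ from the spectrum of $\tilde{\mathcal{A}}^{\epsilon}(t)$ for $t$ small.

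For the scalar-shift piece, I would invoke the Lipschitz dependence of the Bloch eigenvalues on the coefficient matrix (the proposition in Section~\ref{local_simplicity}, applied uniformly in $\eta\in Y'$) to obtain $|\tilde{\lambda}_{0}(t)-\lambda_{0}|\leq c\,t$. With $t=O(\epsilon^{4})$ this gives $|\tilde{z}(\epsilon,t)-z(\epsilon)|=\epsilon^{-2}|\tilde{\lambda}_{0}(t)-\lambda_{0}|=O(\epsilon^{2})$, so paired with the resolvent bounds this piece contributes $O(\epsilon^{2})$ to the operator norm.

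For the differential piece, I would push the operator down to the sesquilinear-form level using self-adjointness of both resolvents. For $u,w\in L^{2}(\mathbb{R}^{d})$, integration by parts gives $\langle R(\epsilon)\,t\mathcal{B}^{\epsilon}\tilde{R}(\epsilon)u,w\rangle=t\int_{\mathbb{R}^{d}}B(x/\epsilon)\,\nabla\tilde{R}(\epsilon)u\cdot\nabla\overline{R(\epsilon)w}\,dx$, whence
\begin{equation*}
\bigl|\langle R(\epsilon)\,t\mathcal{B}^{\epsilon}\tilde{R}(\epsilon)u,w\rangle\bigr|\leq t\,\|B\|_{L^{\infty}}\,\|\nabla\tilde{R}(\epsilon)u\|_{L^{2}}\,\|\nabla R(\epsilon)w\|_{L^{2}}.
\end{equation*}
The core estimate is thus $\|\nabla R(\epsilon)u\|_{L^{2}}\leq C\epsilon^{-1}\|u\|_{L^{2}}$, and likewise for $\tilde{R}(\epsilon)$. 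I would derive it by testing the resolvent equation $\mathcal{A}^{\epsilon}v=u+z(\epsilon)v$, with $v=R(\epsilon)u$, against $v$: coercivity of $A$ gives $\alpha\|\nabla v\|^{2}\leq\|u\|\,\|v\|+|z(\epsilon)|\,\|v\|^{2}$, and substituting $\|v\|\leq\varkappa^{-2}\|u\|$ together with $|z(\epsilon)|=O(\epsilon^{-2})$ yields the claim. The same argument applies verbatim to $\tilde{R}(\epsilon)$, since $A+tB$ is coercive with constant $\alpha/2$ for $t$ small. Multiplying the two gradient bounds by $t=O(\epsilon^{4})$ delivers an $O(\epsilon^{2})$ bound on this piece.

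The main obstacle is exactly this $\epsilon^{-1}$ blow-up of the resolvent gradients, which is what forces the rather strong scaling $t=O(\epsilon^{4})$ rather than a milder threshold; fortunately, Theorem~\ref{theorem:4} permits $t$ to be chosen as small as required. Combining the two estimates gives $\|R(\epsilon)-\tilde{R}(\epsilon)\|_{L^{2}\to L^{2}}=O(\epsilon^{2})$, which is in particular $O(\epsilon)$, as claimed.
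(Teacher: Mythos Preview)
Your argument is correct and reaches the same $O(\epsilon^{2})$ conclusion as the paper, but by a genuinely different route. The paper works at the \emph{unscaled} level: it introduces $S(\epsilon)=(\mathcal{A}-(\lambda_{0}-\epsilon^{2}\varkappa^{2})I)^{-1}$ and $\tilde{S}(\epsilon)$ analogously, then invokes an abstract resolvent-perturbation theorem of Kato (a form-level KLMN-type result) to bound $\|S(\epsilon)-\tilde{S}(\epsilon)\|$ in terms of the relative-bound constants $a\approx c_{1}t$, $b=c_{2}t$; the choice $c_{1}t=\epsilon^{4}\varkappa^{2}$ makes this $O(1)$, and the unitary scaling $R(\epsilon)=\epsilon^{2}T_{\epsilon}^{*}S(\epsilon)T_{\epsilon}$ converts that into $\|R(\epsilon)-\tilde{R}(\epsilon)\|=O(\epsilon^{2})$. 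Your approach stays at the $\epsilon$-scaled level and uses the second resolvent identity together with the explicit energy estimate $\|\nabla R(\epsilon)u\|\leq C\epsilon^{-1}\|u\|$ coming directly from coercivity. This is more elementary---it avoids the abstract Kato machinery entirely---and makes the role of the exponent in $t=O(\epsilon^{4})$ completely transparent: two factors of $\epsilon^{-1}$ from the gradients, one factor of $\epsilon^{-2}$ hidden in the spectral shift, each to be absorbed. The paper's route, on the other hand, packages the form-boundedness cleanly and would generalize more readily to perturbations that are only relatively form-bounded rather than arising from a bounded coefficient matrix.
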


\begin{lem}\label{resolventlemma2}
	With the same notation as in Lemma~\ref{resolventlemma1}, it holds that
	\begin{equation*}
	||\tilde{R}(\epsilon)-\tilde{R}^0(\epsilon)||_{L^2(\mathbb{R}^d)\to L^2(\mathbb{R}^d)}={O}(\epsilon)\quad\mbox{as}\quad \epsilon\to 0.
	\end{equation*}
\end{lem}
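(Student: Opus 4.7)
The plan is to adapt the Bloch wave proof of Theorem~\ref{BirmanSuslina} to the two-band setting produced by the perturbation. After passing to the Bloch/Floquet decomposition of $\tilde{\mathcal{A}}^\epsilon(t)$, the resolvent $\tilde{R}(\epsilon)$ becomes a direct integral of the fiber resolvents of $\epsilon^{-2}\tilde{\mathcal{A}}(\eta;t)-\epsilon^{-2}\tilde{\lambda}_0(t)+\varkappa^2$ over $\eta\in Y^{'}$, and the spectral expansion of each fiber writes $\tilde{R}(\epsilon)$ as a sum of rank-one contributions indexed by the Bloch band $n$. I would then split this sum into three pieces, $n=m$, $n=m+1$, and $n\notin\{m,m+1\}$, and match the first two to the two summands of $\tilde{R}^0(\epsilon)$ while showing the third contributes only $O(\epsilon^2)$ in operator norm.

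The bands with $n\notin\{m,m+1\}$ are the easy part: by hypotheses~\ref{assumptions2:1} and~\ref{assumptions2:2} together with the spectral gap surrounding the edge, there is a $t$- and $\eta$-uniform constant $c>0$ with $|\tilde{\lambda}_n(\eta;t)-\tilde{\lambda}_0(t)|\geq c$ for $n\notin\{m,m+1\}$; the fiber resolvents are therefore uniformly bounded, and the prefactor $\epsilon^2$ inherited from the rescaling gives an $O(\epsilon^2)$ contribution. For the $n=m$ band, I would insert a smooth cutoff localizing $\eta$ to a small neighborhood $U_0$ of $\eta_0(t)$; outside $U_0$ the gap $\tilde{\lambda}_m(\eta;t)-\tilde{\lambda}_0(t)$ is bounded below, contributing another $O(\epsilon^2)$. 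Inside $U_0$, hypothesis~\ref{assumptions2:1} lets me replace $\tilde{\lambda}_m(\eta;t)$ by the quadratic form $\tilde{\lambda}_0(t)+(\eta-\eta_0(t))^T\tilde{B}_0(t)(\eta-\eta_0(t))$ and the Bloch eigenprojection by the rank-one projection onto $\tilde{\phi}_m(\cdot;t)$ evaluated at $\eta_0(t)$; both replacements introduce only $O(\epsilon)$ in operator norm via the second resolvent identity together with a Parseval-type bound on the Gelfand transform side, and after unwinding the Bloch transform one recovers exactly $|Y|[\tilde{\psi}_m^\epsilon](\tilde{B}_0(t)\nabla^2+\varkappa^2I)^{-1}[\overline{\tilde{\psi}_m^\epsilon}]$. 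The $n=m+1$ band is handled identically near $\eta_1(t)$, the one novelty being that the fiber denominator reads $\tilde{\lambda}_{m+1}(\eta;t)-\tilde{\lambda}_0(t)+\epsilon^2\varkappa^2$ instead of $\tilde{\lambda}_{m+1}(\eta;t)-\tilde{\lambda}_1(t)+\epsilon^2\varkappa^2$; a single resolvent expansion with the shift $\tilde{\lambda}_1(t)-\tilde{\lambda}_0(t)=O(t)=O(\epsilon^4)$ shows that this discrepancy costs only $O(\epsilon^2)$.

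The main obstacle I foresee is enforcing each of these estimates uniformly in the perturbation parameter $t=O(\epsilon^4)$. The constants in the cubic Taylor remainders of~\ref{assumptions2:1} and~\ref{assumptions2:2}, the Lipschitz estimates for the Bloch eigenfunctions in $\eta$, and the spectral gap separating bands $m,m+1$ from the rest must all remain stable as $t\to 0$; the uniform definiteness $\tilde{B}_j(t)>\alpha_j I$ built into the hypotheses, and the analyticity of the joint two-dimensional spectral projection~\eqref{ProjectionOperator} onto bands $m$ and $m+1$ in a $t$-independent neighborhood of $\eta_0$, are precisely what make this possible. A secondary technical point is that the two distinct branches $\tilde{\lambda}_m(\eta;t)$ and $\tilde{\lambda}_{m+1}(\eta;t)$ only emerge after diagonalizing inside this joint projection for $t>0$, so one must first carry out the reduction to that two-dimensional invariant subspace and only then apply the single-band asymptotics to each branch.
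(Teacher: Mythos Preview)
Your proposal is correct and follows essentially the same route as the paper: both arguments pass to the Bloch decomposition, split off the bands $n\notin\{m,m+1\}$ and the regions away from the minima using the uniform spectral gap (contributing $O(\epsilon^2)$ after rescaling), and then compare each of the two remaining rank-one pieces to the corresponding summand of $\tilde{R}^0(\epsilon)$ via the quadratic Taylor expansion of $\tilde{\lambda}_{m+j}$ and the replacement of $\tilde{\phi}_{m+j}(\cdot,\eta)$ by its value at $\eta_j(t)$. The only noteworthy difference is in the treatment of the extra shift $\tilde{\lambda}_1(t)-\tilde{\lambda}_0(t)$ appearing in the $n=m+1$ fiber denominator: the paper bounds the difference by passing from $\tilde{\lambda}_0$ to $\tilde{\lambda}_1$ via a monotonicity step, whereas you use a second-resolvent identity together with $\tilde{\lambda}_1(t)-\tilde{\lambda}_0(t)=O(t)=O(\epsilon^4)$; your argument is in fact cleaner and makes the role of the choice $t=O(\epsilon^4)$ more transparent.
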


The proofs of these lemmas will be the content of subsections~\ref{continuity_of_resolvents} and~\ref{internal_homogenization_result}. Now, we state the internal edge homogenization theorem for a multiple spectral edge.

\begin{thm}\label{theorem:5}
	Let $\mathcal{A}$ be the operator defined in $L^2(\mathbb{R}^d)$ as $\mathcal{A}\coloneqq -\nabla\cdot(A\nabla)$. Suppose that the entries of the matrix $A$ belong to $M_B^>$. Let $\lambda_0$ be the upper edge of a spectral gap associated to operator $\mathcal{A}$. Suppose that $\lambda_0$ is attained at one point $\eta_0\in Y^{'}$ and its multiplicity is $2$. Let $\varkappa^2>0$ be small enough so that $\lambda_0-\varkappa^2$ remains in the spectral gap. Let $\mathcal{A}^\epsilon$ be defined as $\mathcal{A}^\epsilon=-\nabla\cdot\left(A(\frac{x}{\epsilon})\nabla\right)$ in $L^2(\mathbb{R}^d)$. 
	
	Let $\tilde{\mathcal{A}}(t)=\mathcal{A}+t\mathcal{B}$ be a perturbation of $\mathcal{A}$ such that the perturbed operator has a simple spectral edge at $\tilde{\lambda}_0(t)$. Let $\tilde{\mathcal{A}}^\epsilon(t)=-\nabla\cdot\left(A(\frac{x}{\epsilon})+tB(\frac{x}{\epsilon})\right)\nabla$. Choose $t={O}(\epsilon^4)$. Assume conditions~\ref{assumptions2:1},~\ref{assumptions2:2} on the perturbed eigenvalues. Then,
	\begin{align}\label{resolventinequality0}
	||R(\epsilon)-\tilde{R}^0(\epsilon)||_{L^2(\mathbb{R}^d)\to L^2(\mathbb{R}^d)}={O}(\epsilon)\quad\mbox{as}\quad \epsilon\to 0,
	\end{align}
	where $R(\epsilon)$ and $\tilde{R}^0(\epsilon)$ are defined in~\eqref{resolvent1} and~\eqref{resolvent2}, respectively.
\end{thm}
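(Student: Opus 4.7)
The plan is to prove Theorem~\ref{theorem:5} by splicing together the two preparatory lemmas via a triangle inequality. Introducing the intermediate resolvent $\tilde{R}(\epsilon)$ defined in~\eqref{resolvent3}, I would write
\begin{align*}
\|R(\epsilon)-\tilde{R}^{0}(\epsilon)\|_{L^{2}\to L^{2}}
\le \|R(\epsilon)-\tilde{R}(\epsilon)\|_{L^{2}\to L^{2}}
+\|\tilde{R}(\epsilon)-\tilde{R}^{0}(\epsilon)\|_{L^{2}\to L^{2}},
\end{align*}
and then bound the first term by $O(\epsilon)$ using Lemma~\ref{resolventlemma1} and the second by $O(\epsilon)$ using Lemma~\ref{resolventlemma2}. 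This immediately yields the estimate~\eqref{resolventinequality0}, so the substance of the theorem is entirely carried by the two lemmas, together with the choice $t=O(\epsilon^{4})$.

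For Lemma~\ref{resolventlemma1}, the natural approach is the second resolvent identity applied to the pair $\mathcal{A}^{\epsilon}-(\epsilon^{-2}\lambda_{0}-\varkappa^{2})I$ and $\tilde{\mathcal{A}}^{\epsilon}(t)-(\epsilon^{-2}\tilde{\lambda}_{0}(t)-\varkappa^{2})I$. These operators differ by the perturbation $t\mathcal{B}^{\epsilon}=-t\nabla\!\cdot\!B(x/\epsilon)\nabla$ together with a constant shift of size $\epsilon^{-2}|\tilde{\lambda}_{0}(t)-\lambda_{0}|=O(t\epsilon^{-2})$, the spectral-edge continuity bound~\eqref{edgeconvergence} supplying the estimate on the shift. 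Taking $t=O(\epsilon^{4})$ makes both contributions of order $\epsilon^{2}$, which combines with the natural $O(\epsilon^{-2})$ size of the two resolvents to yield the desired $O(\epsilon)$ control. For Lemma~\ref{resolventlemma2}, I would adapt the Birman--Suslina machinery of Theorem~\ref{BirmanSuslina} to the perturbed operator $\tilde{\mathcal{A}}^{\epsilon}(t)$; the twist is that only $\tilde{\lambda}_{0}(t)$ is genuinely a spectral edge of $\tilde{\mathcal{A}}(t)$, whereas $\tilde{\lambda}_{1}(t)$ lies only $O(t)=O(\epsilon^{4})$ above it, so after the spectral rescaling by $\epsilon^{-2}$ both the $m$-th and $(m+1)$-th Bloch branches contribute near-singular terms of comparable size, and these are exactly what the two-term ansatz in~\eqref{resolvent2} is designed to capture.

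The main obstacle I anticipate lies in this second step: although the perturbation of Theorem~\ref{theorem:4} has resolved the multiplicity into two simple branches satisfying~\ref{assumptions2:1}--\ref{assumptions2:2}, the branches are separated by only $O(\epsilon^{4})$, so a verbatim application of Theorem~\ref{BirmanSuslina} to a single branch would discard the genuinely singular contribution of the $(m+1)$-th branch. Consequently, the Bloch decomposition of $\tilde{R}(\epsilon)-\tilde{R}^{0}(\epsilon)$ must carry both spectral projections simultaneously, and the $O(\epsilon)$ bound has to survive the near-crossing between the two effective symbols $\tilde{B}_{0}(t)|\xi|^{2}+\varkappa^{2}$ and $\tilde{B}_{1}(t)|\xi|^{2}+\varkappa^{2}+\epsilon^{-2}(\tilde{\lambda}_{1}(t)-\tilde{\lambda}_{0}(t))$. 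The delicate interplay among $t$, $\varkappa^{2}$, and $\epsilon$ is precisely what pins down $t=O(\epsilon^{4})$ as the unique scaling keeping both lemmas valid at once.
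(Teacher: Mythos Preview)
Your reduction of Theorem~\ref{theorem:5} to the triangle inequality
\[
\|R(\epsilon)-\tilde{R}^{0}(\epsilon)\|\le\|R(\epsilon)-\tilde{R}(\epsilon)\|+\|\tilde{R}(\epsilon)-\tilde{R}^{0}(\epsilon)\|
\]
followed by Lemmas~\ref{resolventlemma1} and~\ref{resolventlemma2} is exactly the paper's proof; nothing more is done there.

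A couple of remarks on your sketches of the lemmas themselves. For Lemma~\ref{resolventlemma1} the paper does not use the bare second resolvent identity (which is awkward since $t\mathcal{B}^{\epsilon}$ is unbounded) but rather Kato's quantitative resolvent-continuity theorem for form-bounded perturbations (Theorem~\ref{KLMN}), applied after rescaling to $S(\epsilon),\tilde S(\epsilon)$; this yields the sharper bound $\|S(\epsilon)-\tilde S(\epsilon)\|=O(1)$, hence $\|R(\epsilon)-\tilde R(\epsilon)\|=O(\epsilon^{2})$. Note also that $\|R(\epsilon)\|,\|\tilde R(\epsilon)\|\le 1/\varkappa^{2}=O(1)$, not $O(\epsilon^{-2})$ as you wrote; the $\epsilon^{-2}$ appears only in the unscaled operators $S(\epsilon)$. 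Your reading of Lemma~\ref{resolventlemma2} is accurate: the paper carries both Bloch projections $\tilde{\mathcal B}_{m}$ and $\tilde{\mathcal B}_{m+1}$ simultaneously through the Birman--Suslina argument, and for the $(m{+}1)$-branch exploits $\tilde\lambda_{m+1}-\tilde\lambda_{0}\ge\tilde\lambda_{m+1}-\tilde\lambda_{1}$ to reduce to the non-degeneracy hypothesis~\ref{assumptions2:2}.
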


\begin{proof}[Proof of Theorem~\ref{theorem:5}]
	Observe that
	\begin{align}\label{intermediateinequality}
		{}&||R(\epsilon)-\tilde{R}^0(\epsilon)||_{L^2(\mathbb{R}^d)\to L^2(\mathbb{R}^d)} \notag\\ &\quad\quad\leq||R(\epsilon)-\tilde{R}(\epsilon)||_{L^2(\mathbb{R}^d)\to L^2(\mathbb{R}^d)}+||\tilde{R}(\epsilon)-\tilde{R}^0(\epsilon)||_{L^2(\mathbb{R}^d)\to L^2(\mathbb{R}^d)}.
	\end{align}
	Applying Lemmas~\ref{resolventlemma1} and~\ref{resolventlemma2} to~\eqref{intermediateinequality}, we obtain~\eqref{resolventinequality0}.
\end{proof}

\begin{rem}
	\leavevmode
	\begin{enumerate}
		\item Theorem~\ref{theorem:5} allows the computation of the homogenized coefficients through perturbed Bloch eigenvalues. Both the crossing modes contribute to homogenization, even though the spectral edge is simple after the perturbation.
		\item A perturbation of the form $\tilde{\mathcal{A}}(t)$, as mentioned in Theorem~\ref{theorem:5}, exists for sufficiently small $t$ by Theorem~\ref{theorem:4}.
		\item If the spectral edge is attained at finitely many points, the contribution to the effective operator from each of those points, are merely added up, as in Theorem~\ref{BirmanSuslina}. Hence, our assumption that the spectral edge is attained at one point is not restrictive. Further, the assumption that multiplicity of the spectral edge is $2$ can also be relaxed, since our method allows successive reduction of multiplicity of Bloch eigenvalues at multiple points.
	\end{enumerate}
\end{rem}

\subsection{Proof of Lemma~\ref{resolventlemma1}}\label{continuity_of_resolvents} The aim of this section is to prove Lemma~\ref{resolventlemma1}. We begin by introducing some notation. 
Define the two resolvents $S(\epsilon)$ and $\tilde{S}(\epsilon)$ by
\begin{align}
\label{scaledresolvents}
S(\epsilon)=\left(\mathcal{A}-(\lambda_{0}-\epsilon^2\varkappa^2)I\right)^{-1}
\quad\mbox{and}\quad
\tilde{S}(\epsilon)=\left(\tilde{\mathcal{A}}(t)-(\tilde{\lambda}_{0}(t)-\epsilon^2\varkappa^2)I\right)^{-1}
\end{align}
Define
\begin{align*}
	\mathfrak{h}[u]\coloneqq\int_{\mathbb{R}^d}A\nabla u\cdot\nabla \overline{u}~dy-\lambda_0\int_{\mathbb{R}^d}|u|^2~dy.
\end{align*}
Then, $\mathfrak{h}$ is a closed sectorial form with domain $H^1(\mathbb{R}^d)$.

Consider another form $\mathfrak{p}(t)$ with domain $H^1(\mathbb{R}^d)$ defined by \begin{align*}
	\mathfrak{p}(t)[u]\coloneqq\int_{\mathbb{R}^d}tB\nabla u\cdot\nabla \overline{u}~dy-(\tilde{\lambda}_0-\lambda_0)\int_{\mathbb{R}^d}|u|^2~dy.
\end{align*}

To the sectorial forms $\mathfrak{h}$ and $\mathfrak{p}$, we shall apply the following theorem about continuity of resolvents which can be found in~\cite[p.~340]{Kato1995}.

\begin{thm}[~\cite{Kato1995}]\label{KLMN}
	Let $\mathfrak{h}$ be a densely defined, closed sectorial form bounded from below and let $\mathfrak{p}$ be a form relatively bounded with respect to $\mathfrak{h}$, so that $D(\mathfrak{h})\subset D(\mathfrak{p})$ and
	\begin{align}\label{eq:hypothesis1}
	|\mathfrak{p}[u]|\leq a||u||^2+b\mathfrak{h}[u],
	\end{align}
	
	where $0\leq b<1$, but $a$ may be positive, negative or zero. Then $\mathfrak{h}+\mathfrak{p}$ is sectorial and closed. Let $H, K$ be the operators associated with $\mathfrak{h}$ and $\mathfrak{h}+\mathfrak{p}$, respectively. If $\zeta$ is not in the spectrum of $H$ and 
	\begin{align}\label{eq:hypothesis2}
	||(a+bH)R(\zeta,H)||<1,
	\end{align}
	then $\zeta$ is not in the spectrum of $K$ and
	\begin{align}\label{inequality5}
	||R(\zeta,K)-R(\zeta,H)||\leq \frac{4||(a+bH)R(\zeta,H)||}{(1-||(a+bH)R(\zeta,H)||)^2}||R(\zeta,H)||.
	\end{align}
\end{thm}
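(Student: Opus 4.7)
The plan is to follow the classical Kato-style argument, establishing in sequence: closedness and sectoriality of $\mathfrak{h}+\mathfrak{p}$, invertibility of $K-\zeta$ via a Neumann series on $L^2$, and finally the quantitative bound~\eqref{inequality5}.

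First I would polarize the diagonal relative bound~\eqref{eq:hypothesis1} to the full sesquilinear form,
$$|\mathfrak{p}[u,v]|\leq\bigl(a||u||^2+b\,\mathfrak{h}_\gamma[u]\bigr)^{1/2}\bigl(a||v||^2+b\,\mathfrak{h}_\gamma[v]\bigr)^{1/2},$$
where $\mathfrak{h}_\gamma[\cdot]:=\Re\mathfrak{h}[\cdot]-\gamma||\cdot||^2\geq0$ after shifting by the vertex $\gamma$ of the sector. Because $b<1$, this inequality forces the form norm induced by $\mathfrak{h}+\mathfrak{p}$ on the common domain $D(\mathfrak{h})$ to be equivalent to the form norm induced by $\mathfrak{h}$; hence $\mathfrak{h}+\mathfrak{p}$ is closed, and sectoriality follows from a triangle-inequality estimate on the numerical range using the slack factor $1-b>0$. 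Kato's first representation theorem then supplies the m-sectorial operator $K$ with $D(K)\subset D(\mathfrak{h})$.

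For the resolvent assertion, I would rewrite $(K-\zeta)u=f$ in weak form, $(\mathfrak{h}-\zeta)[u,v]=\langle f,v\rangle-\mathfrak{p}[u,v]$ for $v\in D(\mathfrak{h})$, and substitute the ansatz $u=R(\zeta,H)g$. This recasts the problem as the $L^2$ fixed-point equation $(I+T(\zeta))g=f$, where $T(\zeta)\in\mathcal{B}(L^2)$ is characterized by
$$\langle T(\zeta)g,h\rangle_{L^2}=\mathfrak{p}\bigl[R(\zeta,H)g,\,R(\bar\zeta,H^*)h\bigr].$$
Combining the polarized bound with the identification $a||w||^2+b\,\mathfrak{h}_\gamma[w]=||(aI+bH_r)^{1/2}w||^2$ for the self-adjoint real part $H_r$ of $H$, plus a standard comparison between $H$ and $H_r$ afforded by the finite sector angle, yields $||T(\zeta)||\leq||(a+bH)R(\zeta,H)||$. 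Hypothesis~\eqref{eq:hypothesis2} then makes $I+T(\zeta)$ invertible via Neumann series, giving $\zeta\in\rho(K)$ together with the identity $R(\zeta,K)=R(\zeta,H)(I+T(\zeta))^{-1}$.

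The identity $R(\zeta,K)-R(\zeta,H)=-R(\zeta,H)(I+T(\zeta))^{-1}T(\zeta)$ together with the geometric-series estimate $||(I+T(\zeta))^{-1}||\leq(1-q)^{-1}$ with $q:=||T(\zeta)||$ produces a bound of the form $\tfrac{q}{1-q}||R(\zeta,H)||$; the version~\eqref{inequality5} follows after one extra Cauchy--Schwarz step needed to absorb the fact that the control on $T(\zeta)$ applies $(a+bH)$ on \emph{both} arguments of $\mathfrak{p}$, which is what accounts for the numerical factor $4/(1-q)$ in place of $1$. I expect the delicate step to be precisely the estimate $||T(\zeta)||\leq||(a+bH)R(\zeta,H)||$: because $\mathfrak{p}$ is only relatively \emph{form}-bounded rather than operator-bounded, the naive second resolvent identity $R(\zeta,K)-R(\zeta,H)=-R(\zeta,K)\,P\,R(\zeta,H)$ with $P$ the operator of $\mathfrak{p}$ is not directly usable (since $P$ is unbounded), and one must instead control the bounded surrogate $T(\zeta)$ by the self-adjoint comparison operator $a+bH$---which is exactly why the hypothesis is framed in terms of $||(a+bH)R(\zeta,H)||$ rather than anything involving $P$ directly.
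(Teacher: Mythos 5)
First, a point of reference: the paper does not actually prove this statement. It is Kato's Theorem VI.3.9, transcribed with a citation to \cite{Kato1995} and closed immediately with a \verb|\qed|, so the only thing your sketch can be measured against is Kato's own argument. Your outline does reproduce that argument's skeleton: factor the perturbation through the nonnegative selfadjoint comparison operator $Q=a+bH$, reduce the solvability of $(K-\zeta)u=f$ to inverting a bounded operator on $L^2$ by a Neumann series, and convert the hypothesis \eqref{eq:hypothesis2} into the convergence condition. You also correctly identify the conceptual reason the hypothesis is phrased through $a+bH$ rather than through the (unbounded, possibly non-operator) perturbation itself.

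Two steps in the middle do not close as written. (i) The fixed-point reduction is inconsistent with your definition of $T(\zeta)$: if $\langle T(\zeta)g,h\rangle=\mathfrak{p}[R(\zeta,H)g,R(\bar\zeta,H^*)h]$, then substituting $u=R(\zeta,H)g$ and $v=R(\bar\zeta,H^*)h$ into the weak equation yields $R(\zeta,H)(g-f)=-T(\zeta)g$, not $(I+T(\zeta))g=f$; if instead you leave $v$ free, the resulting operator maps $L^2$ into the dual of the form domain and is not a priori bounded on $L^2$. The object whose Neumann series one actually sums is $C\,Q^{1/2}R(\zeta,H)Q^{1/2}$, where $\mathfrak{p}[u,v]=\langle CQ^{1/2}u,Q^{1/2}v\rangle$ is the bounded factorization furnished by the polarized bound; the crucial identity is $Q^{1/2}R(\zeta,H)Q^{1/2}=QR(\zeta,H)$ (hence equality of norms), which holds because $Q=a+bH$ is a function of the selfadjoint $H$ and so commutes with the resolvent. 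That commutation, not a generic ``comparison between $H$ and $H_r$,'' is the named step your sketch is missing. (ii) Your polarization inequality with constant $1$ is valid only for symmetric $\mathfrak{p}$; for a general sesquilinear form dominated on the diagonal by $\mathfrak{q}$ one only gets $|\mathfrak{p}[u,v]|\le 2\,\mathfrak{q}[u]^{1/2}\mathfrak{q}[v]^{1/2}$, so the naive smallness condition becomes $2\|(a+bH)R(\zeta,H)\|<1$, strictly stronger than \eqref{eq:hypothesis2}. Kato's bookkeeping --- which is exactly what produces the factor $4$ and the squared denominator in \eqref{inequality5} instead of a clean $q/(1-q)$ --- is the device that recovers the conclusion under \eqref{eq:hypothesis2} alone, and your ``one extra Cauchy--Schwarz step'' does not pin it down. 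In the paper's application both $\mathfrak{h}$ and $\mathfrak{p}$ are real symmetric, so these defects are harmless there, but as a proof of the theorem as stated they are genuine gaps.
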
\qed

In order to apply the theorem, we must verify the hypotheses~\eqref{eq:hypothesis1} and~\eqref{eq:hypothesis2}.
We shall prove that $\mathfrak{p}$ is relatively bounded with respect to $\mathfrak{h}$, i.e., there exist $a,b\in\mathbb{R}$, such that:
\begin{align*}
	|\mathfrak{p}[u]|\leq a||u||^2+b\mathfrak{h}[u],
\end{align*}

Observe that

\begin{align*}
	\mathfrak{h}[u]\geq \alpha\int_{\mathbb{R}^d}|\nabla u|^2~dy-\lambda_0\int_{\mathbb{R}^d}|u|^2~dy,
\end{align*}
and
\begin{align*}
	\mathfrak{p}[u]&\leq t||B||_{L^\infty}\int_{\mathbb{R}^d}|\nabla u|^2~dy+|\tilde{\lambda}_0-\lambda_0|\int_{\mathbb{R}^d}|u|^2~dy\notag\\
	&\quad=\frac{t||B||_{L^\infty}}{\alpha}\left\{\int_{\mathbb{R}^d}\alpha|\nabla u|^2~dy-\lambda\int_{\mathbb{R}^d}|u|^2~dy\right\}+\left\{|\tilde{\lambda}_0-\lambda_0|+\frac{t||B||_{L^\infty}}{\alpha}\lambda_0\right\}\int_{\mathbb{R}^d}|u|^2~dy\notag\\
	&\quad= b\mathfrak{h}[u]+a||u||^2,
\end{align*}

where $a=\left\{|\tilde{\lambda}_0-\lambda_0|+\frac{t||B||_{L^\infty}}{\alpha}\lambda_0\right\}\approx c_1t$ and $b=\frac{t||B||_{L^\infty}}{\alpha}=c_2t$ for some constants $c_1$ and $c_2$.

Next, observe that for selfadjoint operator $H$, the resolvent $R(\zeta,H)$ is a normal operator, therefore, we have (see~\cite[p.~177]{Kato1995}) \begin{align*}||R(\zeta,H)||\leq\dfrac{1}{dist(\zeta,\sigma(H))}.\end{align*} Further,
\begin{align*}
	||(a+bH)R(\zeta,H)||&\leq||aR(\zeta,H)||+||bHR(\zeta,H)||&\notag\\
	&\quad\leq\frac{a}{dist(\zeta,\sigma(H))}+||b(I-\zeta R(\zeta,H))||&\notag\\
	&\quad\leq\frac{a}{dist(\zeta,\sigma(H))}+b||I||+b||\zeta R(\zeta,H)||&\notag\\
	&\quad\leq\frac{a}{dist(\zeta,\sigma(H))}+b+b\frac{|\zeta|}{dist(\zeta,\sigma(H))}.
\end{align*} 

The operator corresponding to the sectorial form $\mathfrak{h}$ is $H\coloneqq-\nabla\cdot A\nabla-\lambda_0I$, therefore, $0\in\sigma(H)$, so that, for $\zeta=-\epsilon^2\varkappa^2$
\begin{align*}
	||(a+bH)R(\zeta,H)||&\leq\frac{a}{\epsilon^2\varkappa^2}+2b.&
\end{align*} 

Notice that $R(\zeta,H)=S(\epsilon)$ and $R(\zeta,K)=\tilde{S}(\epsilon)$. Let us assume that $t$ is small enough so that Theorem~\ref{KLMN} can be applied to the resolvents in~\eqref{scaledresolvents}. In particular, we have
\begin{align*}
	||S(\epsilon)-\tilde{S}(\epsilon)||_{L^2(\mathbb{R}^d)\to L^2(\mathbb{R}^d)}&=~||R(\zeta,H)-R(\zeta,K)||&\notag\\&\leq\dfrac{4(c_1t+2c_2t\epsilon^2\varkappa^2)}{(\epsilon^2\varkappa^2-c_1t-2c_2t\epsilon^2\varkappa^2)^2}.&
\end{align*}

Choose $t$ so that $c_1t=\epsilon^4\varkappa^2$, then,
\begin{align*}
	||S(\epsilon)-\tilde{S}(\epsilon)||_{L^2(\mathbb{R}^d)\to L^2(\mathbb{R}^d)}\leq\frac{4(1+2c_3\epsilon^2\varkappa^2)}{\varkappa^2(1-\epsilon^2-2c_3\epsilon^4\varkappa^2)^2}.
\end{align*}

Further, for $\epsilon^2<1/2$,
\begin{align}\label{some_inequality_1}
||S(\epsilon)-\tilde{S}(\epsilon)||_{L^2(\mathbb{R}^d)\to L^2(\mathbb{R}^d)}\leq\frac{16(1+c_3\varkappa^2)}{\varkappa^2(1-c_3\varkappa^2)^2}.
\end{align}

\begin{proof}[Proof of Lemma~\ref{resolventlemma1}]
	Define the scaling transformation $T_\epsilon$ by
	\begin{align*}
		T_\epsilon:u(y)\mapsto \epsilon^{d/2}u(\epsilon y).
	\end{align*}
	These are unitary operators on $L^2(\mathbb{R}^d)$.
	For the operators~\eqref{resolvent1} and~\eqref{resolvent3}, it holds that
	\begin{align*}
		R(\epsilon)=\epsilon^2T^*_\epsilon S(\epsilon)T_\epsilon\quad\mbox{and}\quad
		\tilde{R}(\epsilon)=\epsilon^2T^*_\epsilon \tilde{S}(\epsilon)T_\epsilon.
	\end{align*}
	
	Proving Lemma~\ref{resolventlemma1} is equivalent to proving that
	\begin{align*}
		||S(\epsilon)-\tilde{S}(\epsilon)||_{L^2(\mathbb{R}^d)\to L^2(\mathbb{R}^d)}={O}\left(\dfrac{1}{\epsilon}\right).
	\end{align*}
	In fact, in~\eqref{some_inequality_1}, we proved \begin{align*}
		||S(\epsilon)-\tilde{S}(\epsilon)||_{L^2(\mathbb{R}^d)\to L^2(\mathbb{R}^d)}={O}(1).
	\end{align*}\end{proof}

\subsection{Proof of Lemma~\ref{resolventlemma2}}\label{internal_homogenization_result} The aim of this section is to prove Lemma~\ref{resolventlemma2}. Let $\left(\tilde{\lambda}_l(\eta;t)\right)_{l=1}^\infty$ and $\left(\tilde{\phi}_l(y,\eta;t)\right)_{l=1}^\infty$ be the Bloch eigenvalues and the corresponding orthonormal Bloch eigenvectors for the operator $\tilde{\mathcal{A}}(t)$, defined in Theorem~\ref{theorem:5}. Let, $\tilde{\psi}_l(y,\eta;t)=e^{iy\cdot\eta}\tilde{\phi}_l(y,\eta;t)$. In the sequel, we shall suppress the dependence on $t$ for notational convenience. The operator $\tilde{\mathcal{A}}$ may be decomposed in terms of the Bloch eigenvalues as in the theorem below, a proof of which may be found in~\cite{Bensoussan2011}.

\begin{thm}
	Let $g\in L^2(\mathbb{R}^d)$. Define $l^{\,th}$ Bloch coefficient of $g$ as follows:
	\begin{align*}
		(\tilde{\mathcal{B}}_l g)(\eta)=\int_{\mathbb{R}^d}\overline{\tilde{\psi}_l(y,\eta)}g(y)~dy,~l\in\mathbb{N},\eta\in Y^{'}.
	\end{align*}
	
	Then, the following inverse formula holds.
	\begin{align*}
		g(y)=\sum_{l=1}^{\infty}\int_{Y^{'}}(\tilde{\mathcal{B}}_l g)(\eta){\psi}_l(y,\eta)~d\eta=&\sum_{l=1}^{\infty}(\tilde{\mathcal{B}}_l^*)(\tilde{\mathcal{B}}_l g),\quad\mbox{where},\\ (\tilde{\mathcal{B}}_l^*h)(y)=\int_{Y^{'}}h(\eta)\tilde{\psi}_l(y,\eta)\,d\eta&\quad\mbox{for}\quad h\in L^2(Y^{'}).\end{align*} 
	
	In particular, the following representation holds for the operator $\tilde{\mathcal{A}}$:
	\begin{align*}
		\tilde{\mathcal{A}}=\sum_{l\in\mathbb{N}}\tilde{\mathcal{B}}_l^*\tilde{\lambda}_l\tilde{\mathcal{B}}_l.
	\end{align*}
	
	Also,
	\begin{align*} R(\zeta,\tilde{\mathcal{A}})=\left(\tilde{\mathcal{A}}-\zeta I\right)^{-1}=\sum_{l\in\mathbb{N}}\tilde{\mathcal{B}}_l^*(\tilde{\lambda}_l-\zeta)^{-1}\tilde{\mathcal{B}}_l.
	\end{align*}
\end{thm}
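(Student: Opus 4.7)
The plan is to establish this as a consequence of the Gelfand (or Bloch) transform providing a unitary equivalence between $\mathcal{A}$ on $L^2(\mathbb{R}^d)$ and its direct-integral decomposition, and then diagonalizing each fiber by its discrete spectrum. First, I introduce the Gelfand transform $\mathcal{G}:L^2(\mathbb{R}^d)\to L^2(Y',L^2_\sharp(Y))$ defined initially on $g\in\mathcal{S}(\mathbb{R}^d)$ by $(\mathcal{G}g)(y,\eta)=\sum_{p\in\mathbb{Z}^d}g(y+2\pi p)e^{-i(y+2\pi p)\cdot\eta}$; the shift $y\mapsto y+2\pi q$ reindexes the sum, so $(\mathcal{G}g)(\cdot,\eta)\in L^2_\sharp(Y)$. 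Using Poisson summation and Parseval in $\eta$ on $Y'$, $\mathcal{G}$ is an isometry on $\mathcal{S}(\mathbb{R}^d)$ and extends uniquely to a unitary map onto $L^2(Y',L^2_\sharp(Y))$; a direct computation on $C_c^\infty$ gives the intertwining identity $\mathcal{G}\tilde{\mathcal{A}}\mathcal{G}^*=\int^{\bigoplus}_{Y'}\tilde{\mathcal{A}}(\eta)d\eta$ once we verify that $\mathcal{G}$ sends $e^{iy\cdot\eta}$-quasiperiodic derivatives to $(\nabla+i\eta)$ on the fiber.

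Next, for each $\eta\in Y'$ the fiber $\tilde{\mathcal{A}}(\eta)$ is selfadjoint on a bounded cell with periodic boundary conditions, hence has compact resolvent, and admits a complete orthonormal family $\{\tilde{\phi}_l(\cdot,\eta)\}_{l\ge 1}$ in $L^2_\sharp(Y)$ with eigenvalues $\tilde{\lambda}_l(\eta)$. I then identify the claimed Bloch coefficient with a fiberwise inner product: set $\tilde{\psi}_l(y,\eta)=e^{iy\cdot\eta}\tilde{\phi}_l(y,\eta)$ and compute
\begin{align*}
\langle(\mathcal{G}g)(\cdot,\eta),\tilde{\phi}_l(\cdot,\eta)\rangle_{L^2_\sharp(Y)}=\sum_{p\in\mathbb{Z}^d}\int_Y\overline{\tilde{\phi}_l(y,\eta)}g(y+2\pi p)e^{-i(y+2\pi p)\cdot\eta}dy,
\end{align*}
which by the periodicity of $\tilde{\phi}_l(\cdot,\eta)$ and the change of variable $y+2\pi p\to y$ becomes $\int_{\mathbb{R}^d}\overline{\tilde{\psi}_l(y,\eta)}g(y)dy=(\tilde{\mathcal{B}}_lg)(\eta)$. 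Thus $\tilde{\mathcal{B}}_l=\pi_l\circ\mathcal{G}$, where $\pi_l$ is the fiberwise projection onto the $l$-th eigenvector; and by duality the integral expression for $\tilde{\mathcal{B}}_l^*$ is verified.

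The inverse formula now falls out: for each $\eta$, Parseval in $L^2_\sharp(Y)$ gives $(\mathcal{G}g)(\cdot,\eta)=\sum_{l}(\tilde{\mathcal{B}}_lg)(\eta)\,\tilde{\phi}_l(\cdot,\eta)$ with convergence in $L^2_\sharp(Y)$, and integrating against $\mathcal{G}^*$ using $\mathcal{G}^*\mathcal{G}=I$ yields $g=\sum_l\tilde{\mathcal{B}}_l^*\tilde{\mathcal{B}}_lg$ in $L^2(\mathbb{R}^d)$. Applying the intertwining relation, for $g\in\text{Dom}(\tilde{\mathcal{A}})$, $(\mathcal{G}\tilde{\mathcal{A}}g)(\cdot,\eta)=\tilde{\mathcal{A}}(\eta)(\mathcal{G}g)(\cdot,\eta)=\sum_l\tilde{\lambda}_l(\eta)(\tilde{\mathcal{B}}_lg)(\eta)\tilde{\phi}_l(\cdot,\eta)$, which gives the representation $\tilde{\mathcal{A}}=\sum_l\tilde{\mathcal{B}}_l^*\tilde{\lambda}_l\tilde{\mathcal{B}}_l$ on its natural domain. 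The resolvent formula then comes from the Borel functional calculus applied to the bounded function $(\tilde{\lambda}_l-\zeta)^{-1}$ on each fiber whenever $\zeta\in\rho(\tilde{\mathcal{A}})=\bigcap_\eta\rho(\tilde{\mathcal{A}}(\eta))$.

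The main obstacle is purely technical: controlling the double summation (over the lattice index $p$ appearing in the Gelfand transform and over the Bloch index $l$) in the correct Hilbert-space sense. Conditional convergence issues for arbitrary $g\in L^2(\mathbb{R}^d)$ are bypassed by first establishing every identity on the dense subspace $\mathcal{S}(\mathbb{R}^d)$, where Fubini and absolute convergence are unproblematic, and then invoking isometry of $\mathcal{G}$ to pass to the closure. A parallel subtlety is the measurability and essential supremum bounds required for the direct-integral representation of $\tilde{\mathcal{A}}$; this follows from the fact that $\tilde{\mathcal{A}}(\eta)$ is an analytic family of type $(B)$ (already invoked in the paper), which guarantees that the $\tilde{\lambda}_l(\eta)$ and $\tilde{\phi}_l(\cdot,\eta)$ may be chosen measurably in $\eta$.
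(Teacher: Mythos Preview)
Your proof sketch is correct and follows the standard route via the Gelfand transform and fiberwise spectral decomposition; this is indeed how the result is established in the literature. Note, however, that the paper does not prove this theorem at all: it simply states the result and appends a \qed, referring the reader to \cite{Bensoussan2011} for a proof. So there is no ``paper's own proof'' to compare against---you have supplied an argument where the authors chose to cite one.
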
\qed

Define the Fourier Transform and the inverse Fourier Transform
\begin{align*}
	\left({\mathcal{F}} u\right)(\eta)=\frac{1}{(2\pi)^{d/2}}\int_{\mathbb{R}^d}e^{-iy\cdot\eta}u(y)~dy,\qquad\left({\mathcal{F}^{-1}} u\right)(\eta)=\frac{1}{(2\pi)^{d/2}}\int_{\mathbb{R}^d}e^{iy\cdot\eta}u(y)~dy.
\end{align*}

\begin{proof}[Proof of Lemma~\ref{resolventlemma2}]
	
	Define the operator
	\begin{align*}
		\tilde{S}^0(\epsilon)\coloneqq|Y|[\tilde{\psi}_m]\left(\tilde{{B}}_0\nabla^2+\epsilon^2\varkappa^2I\right)^{-1}[\overline{\tilde{\psi}_m}]+|Y|[\tilde{\psi}_{m+1}]\left(\tilde{{B}}_1\nabla^2+\epsilon^2\varkappa^2I\right)^{-1}[\overline{\tilde{\psi}_{m+1}}]
	\end{align*}
	For the operators~\eqref{resolvent2} and~\eqref{resolvent3}, it holds that
	\begin{align*}
		\tilde{R}(\epsilon)=\epsilon^2T^*_\epsilon \tilde{S}(\epsilon)T_\epsilon\quad\mbox{and}\quad
		\tilde{R}^0(\epsilon)=\epsilon^2T^*_\epsilon \tilde{S}^0(\epsilon)T_\epsilon.
	\end{align*}
	Therefore, to prove Lemma~\ref{resolventlemma2}, it is sufficient to prove that
	\begin{align}
	\label{tobeproved1}
	||\tilde{S}(\epsilon)-\tilde{S}^0(\epsilon)||_{L^2(\mathbb{R}^d)\to L^2(\mathbb{R}^d)}={O}\left(\dfrac{1}{\epsilon}\right).
	\end{align}
	
	By making $\mathcal{O}$ smaller if required~(see~\ref{assumptions2:1} and~\ref{assumptions2:2}), we may assume that
	\begin{align*}
		2(\tilde{\lambda}_{m}(\eta)-\tilde{\lambda}_0)\geq \tilde{{B}}_0(\eta-\eta_0)^2,~\eta\in\mathcal{O},\quad \mbox{ and }\\
		2(\tilde{\lambda}_{m+1}(\eta)-\tilde{\lambda}_1)\geq \tilde{{B}}_1(\eta-\eta_1)^2,~\eta\in\mathcal{O}.
	\end{align*}
	 Let $\chi$ be the characteristic function of $\mathcal{O}$, then the projections $F=\tilde{\mathcal{B}}_m^*\,\chi\,\tilde{\mathcal{B}}_m+\tilde{\mathcal{B}}_{m+1}^*\,\chi\,\tilde{\mathcal{B}}_{m+1}$ and $F^\perp=I-F$ commute with $\tilde{\mathcal{A}}$.
	
	Now, observe that
	\begin{align*}
		||\tilde{S}(\epsilon)-\tilde{S}^0(\epsilon)||_{L^2\to L^2}&=||\tilde{S}(\epsilon)F^{\perp}+\tilde{S}(\epsilon)F-\tilde{S}^0(\epsilon)F-\tilde{S}^0(\epsilon)F^\perp||_{L^2\to L^2}\notag\\
		&\leq||\tilde{S}(\epsilon)F^{\perp}||_{L^2\to L^2}+||\tilde{S}(\epsilon)F-\tilde{S}^0(\epsilon)F||_{L^2\to L^2}+||\tilde{S}^0(\epsilon)F^\perp||_{L^2\to L^2}
	\end{align*}
	
    Thus, in order to prove~\eqref{tobeproved1}, it is sufficient to prove the following:
	\begin{align}
	||\tilde{S}(\epsilon)F^{\perp}||_{L^2(\mathbb{R}^d)\to L^2(\mathbb{R}^d)}={O}(1),\label{tobeproved3}\\
	||\tilde{S}^0(\epsilon)F^\perp||_{L^2(\mathbb{R}^d)\to L^2(\mathbb{R}^d)}={O}(1),\label{tobeproved4}
	\end{align}\vspace{-1.5cm}\begin{align}
	||\tilde{S}(\epsilon)F-\tilde{S}^0(\epsilon)F||_{L^2(\mathbb{R}^d)\to L^2(\mathbb{R}^d)}={O}\left(\dfrac{1}{\epsilon}\right)	\label{tobeproved2}.
	\end{align}
	
	{\it Proof of~\eqref{tobeproved3}}: Notice that the Bloch wave decomposition of $\tilde{S}(\epsilon)$ is given by 	\begin{align*}\tilde{S}(\epsilon)=\sum_{l=1}^{\infty}\tilde{\mathcal{B}}_l^* \left(\tilde{\lambda}_l-\tilde{\lambda}_0+\epsilon^2\varkappa^2\right)^{-1}\tilde{\mathcal{B}}_l.\end{align*}
	
	We may write,
	\begin{align*}
		\tilde{S}(\epsilon)&=\tilde{S}(\epsilon)F+\tilde{S}(\epsilon)F^\perp,&
	\end{align*}
	where 
	\begin{align*}
		\tilde{S}(\epsilon)F=\tilde{\mathcal{B}}_m^* \left(\tilde{\lambda}_m-\tilde{\lambda}_0+\epsilon^2\varkappa^2\right)^{-1}\chi \tilde{\mathcal{B}}_m+\tilde{\mathcal{B}}_{m+1}^* \left(\tilde{\lambda}_{m+1}-\tilde{\lambda}_0+\epsilon^2\varkappa^2\right)^{-1}\chi \tilde{\mathcal{B}}_{m+1},
	\end{align*}
	
	and
	\begin{align}\label{awayfromedge1}
		\tilde{S}(\epsilon)F^\perp&=\sum_{l\neq m,m+1}\tilde{\mathcal{B}}_l^* \left(\tilde{\lambda}_l-\tilde{\lambda}_0+\epsilon^2\varkappa^2\right)^{-1} \tilde{\mathcal{B}}_l+\tilde{\mathcal{B}}_m^*\left(\tilde{\lambda}_m-\tilde{\lambda}_0+\epsilon^2\varkappa^2\right)^{-1}\left(1-\chi\right) \tilde{\mathcal{B}}_m\notag\\
		&\quad\quad+\tilde{\mathcal{B}}_{m+1}^*\left(\tilde{\lambda}_{m+1}-\tilde{\lambda}_0+\epsilon^2\varkappa^2\right)^{-1}\left(1-\chi\right)  \tilde{\mathcal{B}}_{m+1}.
	\end{align}
	
	To prove~\eqref{tobeproved3}, notice that in the first term of~\eqref{awayfromedge1}, the sum does not include indices $m$ and $m+1$, therefore, the Bloch eigenvalues $\tilde{\lambda}_l$ are bounded away from the spectral edge $\tilde{\lambda}_0$, uniformly in $\epsilon$ and hence, the expression $\left(\tilde{\lambda}_l-\tilde{\lambda}_0+\epsilon^2\varkappa^2\right)^{-1}$ is bounded independent of $\epsilon$, for $l\neq m, m+1$. Due to the non-degeneracy conditions assumed in~\ref{assumptions2:1} and~\ref{assumptions2:2}, the Bloch eigenvalues $\tilde{\lambda}_m$ and $\tilde{\lambda}_{m+1}$ are bounded away from $\tilde{\lambda}_0$ outside $\mathcal{O}$, independent of $\epsilon$. Hence, the last two terms in~\eqref{awayfromedge1} are bounded independent of $\epsilon$.
	
	{\it Proof of~\eqref{tobeproved4}}: Similarly, we may write
	\begin{align*}
		\tilde{S}^0(\epsilon)&=\tilde{S}^0(\epsilon)F+\tilde{S}^0(\epsilon)F^\perp,\quad\mbox{where}\\
		\tilde{S}^0(\epsilon)F&=|Y|[\tilde{\psi}_m]\left(\tilde{{B}}_0\nabla^2+\epsilon^2\varkappa^2I\right)^{-1}\chi[\overline{\tilde{\psi}_m}]+|Y|[\tilde{\psi}_{m+1}]\left(\tilde{{B}}_1\nabla^2+\epsilon^2\varkappa^2I\right)^{-1}\chi[\overline{\tilde{\psi}_{m+1}}],\quad\mbox{and},
	\end{align*}
	\begin{align}\label{awayfromedge2}
		\tilde{S}^0(\epsilon)F^\perp&=|Y|[\tilde{\psi}_m]\left(\tilde{{B}}_0\nabla^2+\epsilon^2\varkappa^2I\right)^{-1}\left(1-\chi\right)[\overline{\tilde{\psi}_m}]&\notag\\
		&\quad+|Y|[\tilde{\psi}_{m+1}]\left(\tilde{{B}}_1\nabla^2+\epsilon^2\varkappa^2I\right)^{-1}\left(1-\chi\right)[\overline{\tilde{\psi}_{m+1}}].&
	\end{align}
	$\tilde{S}^0(\epsilon)F^\perp$ may be further written as 
	\begin{align}
		\tilde{S}^0(\epsilon)F^\perp&=|Y|[\tilde{\phi}_m]\mathcal{F}^{-1}\left(\tilde{{B}}_0(\eta-\eta_0)^2+\epsilon^2\varkappa^2I\right)^{-1}\left(1-\chi\right)\mathcal{F}[\overline{\tilde{\phi}_m}]\notag\\
	&\quad+|Y|[\tilde{\phi}_{m+1}]\mathcal{F}^{-1}\left(\tilde{{B}}_1(\eta-\eta_1)^2+\epsilon^2\varkappa^2I\right)^{-1}\left(1-\chi\right)\mathcal{F}[\overline{\tilde{\phi}_{m+1}}].\notag
	\end{align}
	
	The proof of~\eqref{tobeproved4} follows from the positive-definiteness of $\tilde{B}_0$ and $\tilde{B}_1$ assumed in~\ref{assumptions2:1} and~\ref{assumptions2:2}, which makes the operator norm of the terms in~\eqref{awayfromedge2} independent of $\epsilon$. Now, it only remains to prove~\eqref{tobeproved2}.

	{\it Proof of~\eqref{tobeproved2}}: Write $\tilde{S}(\epsilon)F=S_0+S_1$, where, for $j=0,1$,
	\begin{align}\label{res1}
	S_j\coloneqq\tilde{\mathcal{B}}_{m+j}^*\left(\tilde{\lambda}_{m+j}-\tilde{\lambda}_0+\epsilon^2\varkappa^2\right)^{-1}\,\chi\, \tilde{\mathcal{B}}_{m+j}\notag\\
	=X_{m+j}^* \left(\tilde{\lambda}_{m+j}-\tilde{\lambda}_0+\epsilon^2\varkappa^2\right)^{-1} X_{m+j},
	\end{align}
	and, for $j=0,1$,
	\begin{align*}
		\left(X_{m+j} u\right)(\eta)=\int_{\mathbb{R}^d}\chi\overline{\tilde{\psi}_{m+j}}(y,\eta)u(y)~dy\quad\mbox{and}\quad\left(X_{m+j}^* u\right)(y)=\int_{\mathbb{Y}^{'}}\chi\tilde{\psi}_{m+j}(y,\eta)u(y)~d\eta.
	\end{align*}
	
	Write $\tilde{S}^0(\epsilon)F=S_0^0+S_1^0$, where, for $j=0,1$,
	\begin{align}\label{res2}
		S_j^0&=|Y|[\tilde{\phi}_{m+j}]\mathcal{F}^{-1}\left(\tilde{{B}}_j(\eta-\eta_j)^2+\epsilon^2\varkappa^2I\right)^{-1}\left(\chi\right)\mathcal{F}[\overline{\tilde{\phi}_{m+j}}]\notag\\
		&= (X^0_{m+j})^*\left(\tilde{{B}}_j(\eta-\eta_j)^2+\epsilon^2\varkappa^2I\right)^{-1}X^0_{m+j},
	\end{align}
	and, for $j=0,1$,
	\begin{align*}
		\left(X^0_{m+j} u\right)(\eta)=\int_{\mathbb{R}^d}\chi e^{-iy\cdot\eta}\overline{\tilde{\phi}_{m+j}}(y,\tilde{\eta}_j)u(y)~dy\quad\mbox{and}\quad\left(X^0_{m+j} u\right)^*(\eta)=\int_{\mathbb{R}^d}\chi e^{-iy\cdot\eta}\tilde{\phi}_{m+j}(y,\tilde{\eta}_j)u(y)~dy.
	\end{align*}
	
	Observe that, 
	\begin{align*}
		||\tilde{S}(\epsilon)F-\tilde{S}^0(\epsilon)F||_{L^2(\mathbb{R}^d)\to L^2(\mathbb{R}^d)}\leq||S_0-S_0^0||_{L^2(\mathbb{R}^d)\to L^2(\mathbb{R}^d)}+||S_1-S_1^0||_{L^2(\mathbb{R}^d)\to L^2(\mathbb{R}^d)}.
	\end{align*}
	
	Therefore, to prove~\eqref{tobeproved2}, it remains to prove that for $j=0,1$,
	\begin{align*}
	||{S}_j-{S}_j^0||_{L^2(\mathbb{R}^d)\to L^2(\mathbb{R}^d)}={O}\left(\dfrac{1}{\epsilon}\right).
	\end{align*}
	
	where $S_j, S_j^0$ are defined in~\eqref{res1},~\eqref{res2}.
	
	Consider,
	\begin{align*}
		\epsilon||{S}_0-{S}_0^0||&=\epsilon||X_{m}^* [\left(\tilde{\lambda}_{m}-\tilde{\lambda}_0+\epsilon^2\varkappa^2\right)^{-1}] X_{m}-(X^0_{m})^*\left(\tilde{{B}}_0(\eta-\eta_0)^2+\epsilon^2\varkappa^2I\right)^{-1}X^0_{m}||.
	\end{align*}
	Therefore, \begin{align}\label{final1}
		\epsilon||{S}_0-{S}_0^0||& \leq \epsilon||X_{m}^* [\left(\tilde{\lambda}_{m}-\tilde{\lambda}_0+\epsilon^2\varkappa^2\right)^{-1}] X_{m}-X_{m}^*\left(\tilde{{B}}_0(\eta-\eta_0)^2+\epsilon^2\varkappa^2I\right)^{-1}X_{m}||&\notag\\
		&\qquad+\epsilon||X_{m}^*\left(\tilde{{B}}_0(\eta-\eta_0)^2+\epsilon^2\varkappa^2I\right)^{-1}X_{m}-(X^0_{m})^*\left(\tilde{{B}}_0(\eta-\eta_0)^2+\epsilon^2\varkappa^2I\right)^{-1}X^0_{m}||.&
	\end{align}
	
	The first of the two terms on the right hand side (RHS) in the inequality~\eqref{final1} is estimated by using the following chain of inequalities. 
	\begin{align*}
		&\epsilon|\left(\tilde{\lambda}_{m}-\tilde{\lambda}_0+\epsilon^2\varkappa^2\right)^{-1} -\left(\tilde{{B}}_0(\eta-\eta_0)^2+\epsilon^2\varkappa^2I\right)^{-1}|&\notag\\
		&\qquad\leq c\epsilon|\eta-\eta_0|^3\left(\tilde{\lambda}_{m}-\tilde{\lambda}_0+\epsilon^2\varkappa^2\right)^{-1}\left(\tilde{{B}}_0(\eta-\eta_0)^2+\epsilon^2\varkappa^2I\right)^{-1}&\notag\\
		&\qquad\leq\left(c|\eta-\eta_0|^2\left(\tilde{{B}}_0(\eta-\eta_0)^2\right)^{-1}\right)\left(2\epsilon|\eta-\eta_0|\left(\tilde{{B}}_0(\eta-\eta_0)^2+\epsilon^2\varkappa^2I\right)^{-1}\right)\leq C_1.&
	\end{align*}
	
	The proof of the boundedness of the second term on the RHS in inequality~\eqref{final1} hinges on the analyticity of the Bloch eigenfunctions, and may be found in~\cite{Birman2006}. Finally, consider
	\begin{align*}
		\epsilon||{S}_1-{S}_1^0||&=\epsilon||X_{m+1}^* [\left(\tilde{\lambda}_{m+1}-\tilde{\lambda}_0+\epsilon^2\varkappa^2\right)^{-1}] X_{m+1}-(X^0_{m+1})^*\left(\tilde{{B}}_1(\eta-\eta_0)^2+\epsilon^2\varkappa^2I\right)^{-1}X^0_{m+1}||.
	\end{align*}
	Therefore,
	\begin{align}\label{final2}
		\epsilon||{S}_1-{S}_1^0||&\leq \epsilon||X_{m+1}^* [\left(\tilde{\lambda}_{m+1}-\tilde{\lambda}_0+\epsilon^2\varkappa^2\right)^{-1}] X_{m+1}-X_{m+1}^*\left(\tilde{{B}}_1(\eta-\eta_0)^2+\epsilon^2\varkappa^2I\right)^{-1}X_{m+1}||&\notag\\
		&~+\epsilon||X_{m+1}^*\left(\tilde{{B}}_1(\eta-\eta_0)^2+\epsilon^2\varkappa^2I\right)^{-1}X_{m+1}-(X^0_{m+1})^*\left(\tilde{{B}}_1(\eta-\eta_0)^2+\epsilon^2\varkappa^2I\right)^{-1}X^0_{m+1}||.&
	\end{align}
	
	The first of the two terms on RHS in inequality~\eqref{final2} is estimated by using the following chain of inequalities. 
	\begin{align*}
		&\epsilon|\left(\tilde{\lambda}_{m+1}-\tilde{\lambda}_0+\epsilon^2\varkappa^2\right)^{-1} -\left(\tilde{{B}}_1(\eta-\eta_1)^2+\epsilon^2\varkappa^2I\right)^{-1}|&\notag\\
		&\qquad\leq\epsilon|\left(\tilde{\lambda}_{m+1}-\tilde{\lambda}_1+\epsilon^2\varkappa^2\right)^{-1} -\left(\tilde{{B}}_1(\eta-\eta_1)^2+\epsilon^2\varkappa^2I\right)^{-1}|&\notag\\
		&\qquad\leq c\epsilon|\eta-\eta_1|^3\left(\tilde{\lambda}_{m+1}-\tilde{\lambda}_1+\epsilon^2\varkappa^2\right)^{-1}\left(\tilde{{B}}_0(\eta-\eta_1)^2+\epsilon^2\varkappa^2I\right)^{-1}&\notag\\
		&\qquad\leq\left(c|\eta-\eta_1|^2\left(\tilde{{B}}_1(\eta-\eta_0)^2\right)^{-1}\right)\left(2\epsilon|\eta-\eta_1|\left(\tilde{{B}}_1(\eta-\eta_0)^2+\epsilon^2\varkappa^2I\right)^{-1}\right)\leq C_2.&
	\end{align*}
	
	As before, the proof of the boundedness of the second term on RHS in inequality~\eqref{final2} may be found in~\cite{Birman2006}.\end{proof}

\begin{appendix}
\renewcommand{\thm}{{\bf Theorem A.\arabic{thm}}}
\section{Perturbation Theory of holomorphic family of type $(B)$}
\label{PerturbationTheory}
In this section, we show that a perturbation in the coefficients of the operator $\mathcal{A}$ gives rise to a corresponding holomorphic family of sectorial forms of type $(a)$. Further, the selfadjointness of the forms coupled with the compactness of the resolvent for the operator family ensures that it is a selfadjoint holomorphic family of type $(B)$. For definition of these notions, see Kato~\cite{Kato1995}. 

Let $A\in M_B^{>}$ and $B=(b_{kl})$ be a symmetric matrix with $L^\infty_{\sharp}(Y,\mathbb{R})$ entries. Then, for $\sigma<\frac{\alpha}{d||B||_{L^\infty}}$, $A+\sigma B$ belongs to $M_B^{>}$, where $\alpha$ is a coercivity constant for $A$, as in~\eqref{coercivity}. For a fixed $\eta_0 \in Y^{'}$ and for $\sigma_0\coloneqq \frac{\alpha}{2d||B||_{L^\infty}}$, let us define the operator family \begin{align*}\mathcal{A}(\eta_0)(\tau)=-(\nabla+i\eta_0)\cdot(A+\tau B)(\nabla+i\eta_0),\quad\tau\in R,\end{align*} where $R=\{z\in\mathbb{C}:|\operatorname{Re}(z)|<\sigma_0,|\operatorname{Im}(z)|<\sigma_0\}.$ For real $\tau$, $-\sigma_0<\tau<\sigma_0$, $A+\tau B$ is coercive with a coercivity constant $\alpha/2$. The holomorphic family of sesquilinear forms $\mathfrak{t}(\tau)$ associated to operator $\mathcal{A}+\tau\mathcal{B}$, with the $\tau$-independent domain $\mathfrak{D}(\mathfrak{t}(\tau))=H^1_\sharp(Y)$, is defined as \begin{align*}
\mathfrak{t}(\tau)[u,v] &\coloneqq \int_Y \left(a_{kl}(y)+\tau b_{kl}(y)\right)\frac{\partial u}{\partial y_l}\frac{\partial \overline{v}}{\partial y_k}~dy+i\eta_{0,l}\int_Y \left(a_{kl}(y)+\tau b_{kl}(y)\right) u\frac{\partial \overline{v}}{\partial y_k}~dy\\
&\quad-i\eta_{0,k}\int_Y \left(a_{kl}(y)+\tau b_{kl}(y)\right) \overline{v} \frac{\partial u}{\partial y_l}~dy+\eta_{0,l}\eta_{0,k}\int_Y \left(a_{kl}(y)+\tau b_{kl}(y)\right) u\overline{v}~dy,
\end{align*}

where $\eta_0\coloneqq(\eta_{0,1},\eta_{0,2},\ldots,\eta_{0,d})$ and summation over repeated indices is assumed.

\begin{thm}
	$\mathfrak{t}(\tau)$ is a holomorphic family of type $(a)$.
\end{thm}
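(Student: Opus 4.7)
The plan is to verify the three defining properties of a holomorphic family of type $(a)$ in the sense of Kato~\cite{Kato1995}: (i) the forms $\mathfrak{t}(\tau)$ share a common, $\tau$-independent domain; (ii) each form is sectorial and closed; and (iii) for every fixed $u$, the map $\tau\mapsto\mathfrak{t}(\tau)[u,u]$ is holomorphic on $R$. Condition (i) is built in, since the given domain is $H^1_\sharp(Y)$ regardless of $\tau$. The key computational step is to rewrite the form compactly as
\begin{equation*}
\mathfrak{t}(\tau)[u,u]=\int_Y (A+\tau B)(\nabla+i\eta_0)u\cdot\overline{(\nabla+i\eta_0)u}\,dy=\mathfrak{a}(u)+\tau\,\mathfrak{b}(u),
\end{equation*}
where $\mathfrak{a}(u)\coloneqq\int_Y A\,w\cdot\overline{w}\,dy$ and $\mathfrak{b}(u)\coloneqq\int_Y B\,w\cdot\overline{w}\,dy$ with $w=(\nabla+i\eta_0)u$. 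Because $A$ and $B$ are real and symmetric, both $\mathfrak{a}(u)$ and $\mathfrak{b}(u)$ are real numbers, and thus (iii) follows immediately: $\mathfrak{t}(\tau)[u,u]$ is a polynomial of degree one in $\tau$.

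For the sectoriality half of (ii), I would split $\tau=s+it$ and write $\operatorname{Re}\mathfrak{t}(\tau)[u,u]=\mathfrak{a}(u)+s\,\mathfrak{b}(u)$ and $\operatorname{Im}\mathfrak{t}(\tau)[u,u]=t\,\mathfrak{b}(u)$. Using the coercivity of $A$ and the pointwise bound $|B\,w\cdot\overline{w}|\leq d\|B\|_{L^\infty}|w|^2$, one has
\begin{equation*}
\operatorname{Re}\mathfrak{t}(\tau)[u,u]\geq\bigl(\alpha-|s|d\|B\|_{L^\infty}\bigr)\int_Y|w|^2\,dy\geq\frac{\alpha}{2}\int_Y|w|^2\,dy
\end{equation*}
for $|s|<\sigma_0=\alpha/(2d\|B\|_{L^\infty})$, while
\begin{equation*}
|\operatorname{Im}\mathfrak{t}(\tau)[u,u]|\leq|t|\,d\|B\|_{L^\infty}\int_Y|w|^2\,dy\leq\frac{\alpha}{2}\int_Y|w|^2\,dy.
\end{equation*}
Thus $|\operatorname{Im}\mathfrak{t}(\tau)[u,u]|\leq\operatorname{Re}\mathfrak{t}(\tau)[u,u]$ for $\tau\in R$, which confines the numerical range to the sector $|\arg z|\leq\pi/4$ and gives sectoriality.

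The remaining task is closedness. The natural form norm, $\bigl(\operatorname{Re}\mathfrak{t}(\tau)[u,u]+C\|u\|_{L^2}^2\bigr)^{1/2}$, will be shown to be equivalent to the $H^1_\sharp(Y)$ norm, which is already complete; this gives closedness on $H^1_\sharp(Y)$. The upper bound is immediate from $\|A+\tau B\|_{L^\infty}\leq\|A\|_{L^\infty}+\sigma_0\|B\|_{L^\infty}$. The mildly delicate part, and the main obstacle, is the lower bound, because the shifted gradient $(\nabla+i\eta_0)u$ mixes $\nabla u$ with $u$ through cross terms $i\eta_{0,l}\int(u\,\partial_l\overline{u}-\overline{u}\,\partial_l u)$, so coercivity on $w$ does not immediately give coercivity on $u$. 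I would absorb these cross terms via Young's inequality, yielding
\begin{equation*}
\int_Y|(\nabla+i\eta_0)u|^2\,dy\geq\frac{1}{2}\|\nabla u\|_{L^2}^2-|\eta_0|^2\|u\|_{L^2}^2,
\end{equation*}
so that adding a sufficiently large multiple of $\|u\|_{L^2}^2$ (depending only on $|\eta_0|$ and $\alpha$, hence uniformly in $\tau\in R$) restores $G\mathring{a}$rding-type control of the full $H^1_\sharp(Y)$ norm. Combining this with the upper bound gives equivalence of norms and hence closedness, completing the verification that $\mathfrak{t}(\tau)$ is a holomorphic family of type $(a)$.
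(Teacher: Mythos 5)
Your proposal is correct, and its overall skeleton (common domain, sectoriality, closedness, holomorphy in $\tau$) matches the paper's. The one place where you take a genuinely cleaner route is sectoriality. The paper expands $\mathfrak{t}(\tau)[u]$ into its component integrals, derives a G\aa rding bound $\operatorname{Re}\mathfrak{t}(\tau)[u]\geq \frac{\alpha}{4}\|u\|^2_{H^1_\sharp(Y)}-C\|u\|^2_{L^2_\sharp(Y)}$ together with an upper bound $|\operatorname{Im}\mathfrak{t}(\tau)[u]|\leq C_5\|\nabla u\|^2+C_6\|u\|^2$, and then must shift the form by a multiple of $\|u\|^2_{L^2}$ to place the numerical range in a sector. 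You instead observe that, because $A$ and $B$ are real symmetric, both $\mathfrak{a}(u)=\int_Y Aw\cdot\overline{w}$ and $\mathfrak{b}(u)=\int_Y Bw\cdot\overline{w}$ are real, so that $\operatorname{Re}\mathfrak{t}(\tau)[u]=\mathfrak{a}(u)+s\,\mathfrak{b}(u)$ and $\operatorname{Im}\mathfrak{t}(\tau)[u]=t\,\mathfrak{b}(u)$ exactly; coercivity of $A+sB$ for $|s|<\sigma_0$ then pins the numerical range in the sector $|\arg z|\leq\pi/4$ with vertex at the origin, uniformly in $\tau\in R$, with no shift needed. This buys a shorter argument and a sector that is manifestly uniform in $\tau$; the paper's componentwise route is more pedestrian but produces the explicit G\aa rding inequality \eqref{eq:666} that it then reuses verbatim for closedness. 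Your closedness step is the same in substance — the identity $\int_Y|(\nabla+i\eta_0)u|^2\geq\frac12\|\nabla u\|^2-|\eta_0|^2\|u\|^2$ recovers exactly that G\aa rding control, and equivalence of the form norm with the $H^1_\sharp(Y)$ norm is equivalent to the paper's Cauchy-sequence verification. The holomorphy step (affine dependence on $\tau$) is identical in both.
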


\begin{proof}
	The quadratic form associated with $\mathfrak{t}(\tau)$ is as follows:
	\begin{align*}
	\mathfrak{t}(\tau)[u] & \coloneqq \int_Y \left(a_{kl}(y)+\tau b_{kl}(y)\right)\frac{\partial u}{\partial y_l}\frac{\partial \overline{u}}{\partial y_k}~dy+i\eta_{0,l}\int_Y \left(a_{kl}(y)+\tau b_{kl}(y)\right) u\frac{\partial \overline{u}}{\partial y_k}~dy
	\\
	& \quad-i\eta_{0,k}\int_Y \left(a_{kl}(y)+\tau b_{kl}(y)\right) \overline{u} \frac{\partial u}{\partial y_l}~dy+\eta_{0,k}\eta_{0,l}\int_Y \left(a_{kl}(y)+\tau b_{kl}(y)\right) u\overline{u}~dy.
	\end{align*}
	
	{\bf $(i)$} $\mathfrak{t}(\tau)$ is sectorial.
	
	Let us write $\tau=\rho+i\gamma$, then the quadratic form $\mathfrak{t}(\tau)$ can be written as the sum of its real and imaginary parts:
	\begin{align*}
	\mathfrak{t}(\tau)=	\Re{\mathfrak{t}(\tau)[u]}+	i\Im{\mathfrak{t}(\tau)[u]}
	\end{align*}
	
	where the real part is
	\begin{align}\label{realpart}
	\Re{\mathfrak{t}(\tau)[u]} & \coloneqq \int_Y \left(a_{kl}(y)+\rho b_{kl}(y)\right)\frac{\partial u}{\partial y_l}\frac{\partial \overline{u}}{\partial y_k}~dy+i\eta_{0,l}\int_Y \left(a_{kl}(y)+\rho b_{kl}(y)\right) u\frac{\partial \overline{u}}{\partial y_k}~dy\notag
	\\
	&\quad -i\eta_{0,k}\int_Y \left(a_{kl}(y)+\rho b_{kl}(y)\right) \overline{u} \frac{\partial u}{\partial y_l}~dy+\eta_{0,k}\eta_{0,l}\int_Y \left(a_{kl}(y)+\rho b_{kl}(y)\right) u\overline{u}~dy,
	\end{align}
	
	and the imaginary part is
	\begin{align}\label{imaginarypart}
	\Im{\mathfrak{t}(\tau)[u]} & \coloneqq \int_Y \left(\gamma b_{kl}(y)\right)\frac{\partial u}{\partial y_l}\frac{\partial \overline{u}}{\partial y_k}~dy+2\Im{\left(\eta_{0,l}\int_Y \left(\gamma b_{kl}(y)\right) u\frac{\partial \overline{u}}{\partial y_k}~dy\right)}\notag
	\\
	&\quad +\eta_{0,k}\eta_{0,l}\int_Y \left(\gamma b_{kl}(y)\right) u\overline{u}~dy.
	\end{align}
	
	The real part~\eqref{realpart} of ${\mathfrak{t}(\tau)[u]}$ may also be written as \begin{align}\label{realpart2}
	\Re{\mathfrak{t}(\tau)[u]} & \coloneqq \int_Y \left(a_{kl}(y)+\rho b_{kl}(y)\right)\frac{\partial u}{\partial y_l}\frac{\partial \overline{u}}{\partial y_k}~dy+2\Re{\left(i\eta_{0,l}\int_Y \left(a_{kl}(y)+\rho b_{kl}(y)\right) u\frac{\partial \overline{u}}{\partial y_k}~dy\right)}\notag
	\\
	&\quad +\eta_{0,k}\eta_{0,l}\int_Y \left(a_{kl}(y)+\rho b_{kl}(y)\right) u\overline{u}~dy.
	\end{align}
	
	The first term in~\eqref{realpart2} is estimated from below as follows:
	\begin{equation}
	\int_Y \left(a_{kl}(y)+\rho b_{kl}(y)\right)\frac{\partial u}{\partial y_l}\frac{\partial \overline{u}}{\partial y_k}~dy \geq \frac{\alpha}{2}\int_{Y}|\nabla u|^2~dy.\label{eq:111}
	\end{equation}
	
	The second term in~\eqref{realpart2} may be bounded from above as follows:
	\begin{align}
	\Re{\left(i\eta_{0,l}\int_Y \left(a_{kl}(y)+\rho b_{kl}(y)\right) u\frac{\partial \overline{u}}{\partial y_k}~dy\right)} & \leq  C_1||u||_{L^2_{\sharp}(Y)}||\nabla u||_{L^2_{\sharp}(Y)}\notag
	\\
	& \leq  C_1C_*||u||^2_{L^2_{\sharp}(Y)}+\frac{C_1}{C_*}||\nabla u||^2_{L^2_{\sharp}(Y)}\notag
	\\
	& =  C_2||u||^2_{L^2_{\sharp}(Y)}+\frac{\alpha}{4}||\nabla u||^2_{L^2_{\sharp}(Y)},\label{eq:222}
	\end{align}
	
	where $C_*=\frac{4C_1}{\alpha}$ and $C_1, C_2$ are some constants independent of $u$ and $\rho$.
	
	The last term in~\eqref{realpart2} is estimated as
	\begin{align}
	\eta_{0,k}\eta_{0,l}\int_Y \left(a_{kl}(y)+\rho b_{kl}(y)\right) u\overline{u}~dy \leq C_3 ||u||^2_{L^2_{\sharp}(Y)},\label{eq:333}
	\end{align} for some $C_3>0$.
	
	Finally, combining \eqref{eq:111}, \eqref{eq:222} and \eqref{eq:333}, we obtain
	\begin{align}
	\Re{\mathfrak{t}(\tau)[u]} \geq \frac{\alpha}{4}||u||^2_{H^1_{\sharp}(Y)}-C_4||u||^2_{L^2_{\sharp}(Y)},\label{eq:444}
	\end{align} for some $C_4>0$.

	Estimating the imaginary part~\eqref{imaginarypart} from above, we obtain
	\begin{align}
	|\Im{\mathfrak{t}(\tau)[u]}| \leq C_5||\nabla u||^2_{L^2_\sharp(Y)}+C_6||u||^2_{L^2_\sharp(Y)},\label{eq:999}
	\end{align} for some positive $C_5, C_6$.
	
	Now, choose a scalar $C_7$ so that $C_7=\frac{4C_5}{\alpha}$.
	
	The inequality \eqref{eq:444} may be written as
	\begin{align}
	\Re{\mathfrak{t}(\tau)[u]} + C_4||u||^2_{L^2_{\sharp}(Y)} + \frac{C_6}{C_7}||u||^2_{L^2_{\sharp}(Y)} \geq \frac{\alpha}{4}||u||^2_{H^1_{\sharp}(Y)}+\frac{C_6}{C_7}||u||^2_{L^2_{\sharp}(Y)}.\label{eq:555}
	\end{align}
	
	Now, we define a new quadratic form $\tilde{\mathfrak{t}}[u]\coloneqq \mathfrak{t}[u]+(C_4+\frac{C_6}{C_7})||u||^2_{L^2_\sharp{Y}},$ then inequality \eqref{eq:555} becomes
	\begin{align}
	\Re{\tilde{\mathfrak{t}}(\tau)[u]} \geq \frac{\alpha}{4}||u||^2_{H^1_{\sharp}(Y)}+\frac{C_6}{C_7}||u||^2_{L^2_{\sharp}(Y)}.\label{eq:666}
	\end{align}
	
	This may be further written as
	\begin{align}
	\Re{\tilde{\mathfrak{t}}(\tau)[u]} - \frac{\alpha}{4}||u||^2_{L^2_\sharp(Y)}  \geq  \frac{\alpha}{4}||\nabla u||^2_{L^2_{\sharp}(Y)}+\frac{C_6}{C_7}||u||^2_{L^2_{\sharp}(Y)}.\label{eq:777}
	\end{align}
	
	On multiplying throughout by $C_7$, the inequality \eqref{eq:777} becomes
	\begin{align}
	C_7\left\{\Re{\tilde{\mathfrak{t}}(\tau)[u]} - \frac{\alpha}{4}||u||^2_{L^2_\sharp(Y)}\right\}  \geq  C_5||\nabla u||^2_{L^2_{\sharp}(Y)}+{C_6}||u||^2_{L^2_{\sharp}(Y)}.\label{eq:888}
	\end{align}
	
	Since $\Im{\tilde{\mathfrak{t}}[u]}=\Im{\mathfrak{t}[u]}$, combining the inequalities \eqref{eq:999} and \eqref{eq:888}, we obtain
	\begin{align*}
	|\Im{\tilde{\mathfrak{t}}(\tau)[u]}| \leq  C_7\left\{\Re{\tilde{\mathfrak{t}}(\tau)[u]} - \frac{\alpha}{4}||u||^2_{L^2_\sharp(Y)}\right\}.
	\end{align*}
	
	This proves that the form $\tilde{\mathfrak{t}}$ is sectorial. However, the property of sectoriality is invariant under a shift. Therefore, $\mathfrak{t}$ is sectorial, as well.
	
	{\bf $(ii)$} $\mathfrak{t}(\tau)$ is closed.
	
	This follows from the inequality \eqref{eq:666}. If $u_n\xrightarrow{{\mathfrak{t}-convergence}}u$ then $\Re{\mathfrak{t}[u_n-u_m]}\to 0$ as $n,m\to\infty$. By \eqref{eq:666}, $(u_n)$ is a Cauchy sequence in $H^1_\sharp(Y)$. By completeness, there is $v\in H^1_\sharp(Y)$ to which the sequence converges. However, $\mathfrak{t}$-convergence implies $L^2$ convergence, and therefore, $u=v$. Clearly, $\mathfrak{t}[u_n-u]\to 0$.
	
	{\bf $(iii)$} $\mathfrak{t}(\tau)$ is a holomorphic family of type $ (a) $.
	
	We have proved that $\mathfrak{t}(\tau)[u]$ is sectorial and closed. It remains to prove that the form is holomorphic. This is easily done since $\mathfrak{t}(\tau)[u]$ is linear in $\tau$ for each fixed $u\in H^1_\sharp(Y)$.
\end{proof}

The first representation theorem of Kato ensures that there exists a unique m-sectorial operator with domain contained in $H^1_\sharp(Y)$ associated with each $\mathfrak{t}(\tau)$. A proof may be found in~\cite[p.322]{Kato1995}.  The family of such operators associated with a holomorphic family of sesquilinear forms of type $(a)$ is called a holomorphic family of type $( B )$. The aforementioned m-sectorial operator is given by
\begin{align*}\mathcal{A}(\eta_0)(\tau)=-(\nabla+i\eta_0)(A+\tau B)(\nabla+i\eta_0).\end{align*} It follows from the symmetry of the matrix $A+\tau B$ that the family $\mathcal{A}(\eta_0)(\tau)$ is a selfadjoint holomorphic family of type $(B)$. Moreover, by the compact embedding of $H^1_\sharp(Y)$ in $L^2_\sharp(Y)$, the operator $\mathcal{A}(\eta_0)(\tau)+C^*I$ has compact resolvent for each $\tau\in R$ for some appropriate constant $C^*$, independent of $\tau\in R$.

Hence, by Kato-Rellich Theorem, there exists a complete orthonormal set of eigenvectors associated with the operator family $\mathbb{A}(\eta_0)(\tau)$ which are analytic for the whole interval $-\sigma_0<\tau<\sigma_0$.
\end{appendix}
\bibliographystyle{plain}
\bibliography{mylit.bib}
\end{document}